\newcommand{\C}{\mathcal C}
\newcommand{\DD}{\mathcal D}
\newcommand{\HH}{\mathcal H}
\newcommand{\F}{\mathcal F}
\newcommand{\U}{\mathcal U}
\newcommand{\V}{\mathcal V}
\newcommand{\W}{\mathcal W}
\newcommand{\w}{\omega}
\newcommand{\e}{\varepsilon}
\newcommand{\II}{\mathbb I}
\newcommand{\IN}{\mathbb N}
\newcommand{\IR}{\mathbb R}
\newcommand{\A}{\mathcal A}
\newcommand{\K}{\mathcal K}
\newcommand{\BB}{\mathcal B}
\newcommand{\id}{\mathrm{id}}
\newcommand{\St}{\mathcal{S}t}
\newcommand{\Int}{\mathrm{Int}}
\newcommand{\setmap}{\multimap}
\newcommand{\diam}{\mathrm{diam}}
\newcommand{\Ra}{\Rightarrow}
\newcommand{\dist}{\mathrm{dist}}
\newtheorem{theorem}{Theorem}[section]
\newtheorem{corollary}[theorem]{Corollary}
\newtheorem{proposition}[theorem]{Proposition}
\newtheorem{lemma}[theorem]{Lemma}
\newtheorem{claim}[theorem]{Claim}
\theoremstyle{definition}
\newtheorem{definition}[theorem]{Definition}
\newtheorem{remark}[theorem]{Remark}
\title{Universal nowhere dense subsets of locally compact manifolds}
\author{Taras Banakh and Du\v san Repov\v s}
\address{T.Banakh: Jan Kochanowski University in Kielce (Poland) and Ivan Franko National University of Lviv (Ukraine)}
\email{t.o.banakh@gmail.com}
\address{D.Repovs: Faculty of Mathematics and Physics, and
Faculty of Education, University of Ljubljana (Slovenia)}
\email{dusan.repovs@guest.arnes.si}
\keywords{Universal nowhere dense subset, Sierpi\'nski carpet, Menger cube, Hilbert cube manifold,  $n$-manifold, tame ball, tame decomposition}
\subjclass[2010]{57N20, 57N45, 57N60}
\begin{document}
\begin{abstract} In each manifold $M$ modeled on a finite or infinite dimensional cube $[0,1]^n$, $n\le \w$, we construct a closed nowhere dense subset $S\subset M$ (called a spongy set) which is a universal nowhere dense set in $M$ in the sense that for each nowhere dense subset $A\subset M$ there is a homeomorphism $h:M\to M$ such that $h(A)\subset S$. The key tool in the construction of  spongy sets is a theorem on topological equivalence of certain decompositions of manifolds. A special
case of this theorem says that two vanishing cellular strongly shrinkable decompositions $\A,\BB$ of a Hilbert cube manifold $M$ are topologically equivalent if any two non-singleton elements $A\in\A$ and $B\in\BB$ of these decompositions are ambiently homeomorphic.
\end{abstract}
\maketitle

\section{Introduction} In this paper we shall construct and characterize universal nowhere dense subsets of manifolds modeled on finite or infinite dimensional cubes $\II^n$, $n\le \w$. A paracompact space $M$ is called a {\em manifold modeled on a model space $E$} (briefly, an {\em $E$-manifold\/}) if  each point $x\in X$ has an open neighborhood $O_x\subset M$ homeomorphic to an open subset of the model space $E$.

A nowhere dense subset $N$ of a topological space $M$ is called {\em a universal nowhere dense set} in $M$ if for each nowhere dense subset $A\subset M$ there is a homeomorphism $h:M\to M$ such that $h(A)\subset N$.

It is well-known that the standard Cantor set $M^1_0$ is a universal nowhere dense subset of the unit interval $\II=[0,1]$ and the Sierpi\'nski carpet $M^2_1$ is a universal nowhere dense subset of the square $\II^2$. The Cantor set and the Sierpi\'nski carpet are first representatives in the hierarchy of the Menger cubes $M^n_{n-1}$, which are universal nowhere dense subsets of the $n$-dimensional cubes $\II^n$, see  \cite{Menger}.

The topology of the pair $(\II^2,M^2_1)$ was characterized by Whyburn \cite{Whyburn}.  His result was generalized by Cannon \cite{Cannon73} who gave a topological characterization of the pair $(\II^n,M^n_{n-1})$ for all positive integers
$n\ne 4$.
In this paper we shall generalize these results of Whyburn and Cannon by constructing a specific universal nowhere dense subset $S$ (called a {\em spongy set}) in each $\II^n$-manifold  $M$ and giving a topological characterization of the resulting pair $(M,S)$. The definition of a spongy set is based on the notion of a tame ball.

\begin{definition} A subset $B$ of an $\II^n$-manifold $M$, $n\le \w$, is called a {\em tame ball\/}  in $M$ if $B$ has an open neighborhood $O(B)\subset M$ such that the pair $(O(B),B)$ is homeomorphic to the pair
$$\begin{cases}
(\IR^n,\II^n)&\mbox{if $n<\w$,}\\
\big(\II^\w\times[0,2),\II^\w\times[0,1]\big)&\mbox{if $n=\w$}.
\end{cases}
$$
\end{definition}

A family $\F$ of subsets of a topological space $X$ is called {\em vanishing} if for any open cover $\U$ of $X$ the family $\F'=\{F\in\F:\forall U\in\U\;\;F\not\subset U\}$ is {\em locally finite} in $X$.

\begin{definition} A subset $S$ of an $\II^n$-manifold $M$, $n\le \w$, is called a {\em spongy set} in $M$ if
\begin{enumerate}
\item $S$ is closed and nowhere dense in $M$,
\item the family $\C$ of connected components of the complement $M\setminus S$ is vanishing in $M$,
\item any two connected components $C,C'\in\C$ have disjoint closures in $M$, and
\item the closure $\bar C$ of each connected component $C\in\C$ is a tame ball in $M$.
\end{enumerate}
\end{definition}

A typical example of a spongy set in a finite dimensional cube $\II^n$ is the Menger cube $M^n_{n-1}$. The following theorem generalizes the results of Whyburn \cite{Whyburn} (for $n=2$)
and Cannon \cite{Cannon73} (for $n\in\IN\setminus\{4\}$) and gives many examples of universal nowhere dense subsets in finite and infinite dimensional manifolds. This theorem is essentially used in the paper \cite{BR} devoted to constructing universal meager subsets in locally compact manifolds.

\begin{theorem}\label{main1} Let $M$ be a manifold modeled on a cube $\II^n$, $n\le\w$.
\begin{enumerate}
\item Each nowhere dense subset of $M$ lies in a spongy subset of $M$.
\item Any two spongy subsets of $M$ are ambiently homeomorphic.
\item Any spongy subset of $M$ is a universal nowhere dense subset in $M$.
\end{enumerate}
\end{theorem}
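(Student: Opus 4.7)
The plan is to observe that clause (3) follows immediately from (1) and (2), and then to attack (1) and (2) separately. For (3): given a spongy set $S\subset M$ and an arbitrary nowhere dense subset $A\subset M$, apply (1) to enlarge $A$ to a spongy set $S'\supset A$, and then apply (2) to obtain a homeomorphism $h:M\to M$ with $h(S')=S$; then $h(A)\subset h(S')=S$ as required.

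For (1), I would construct a spongy set $S\supset A$ by inscribing a vanishing family of tame balls with pairwise disjoint closures into the open set $M\setminus A$, arranging that the union of their interiors is dense in $M\setminus A$. Concretely, fix a locally finite atlas of charts on $M$ modeled on $\II^n$ (or on $\II^\w\times[0,2)$ if $n=\w$), equipped with auxiliary local metrics, and choose inductively at stage $k$ finitely many tame balls $B_i$ of diameter less than $2^{-k}$ that lie in $M\setminus A$ and are disjoint from all previously chosen closures, using a standard subdivision of $\II^n$ into small subcubes (or collar coordinates in the Hilbert cube case) to arrange tameness at every step. Setting $S:=M\setminus\bigcup_i\Int B_i$ then produces a closed nowhere dense superset of $A$ verifying the four defining axioms of a spongy set: the vanishing property follows from the diameter control, disjointness of closures is built in by construction, and each $\bar B_i$ is tame by design.

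For (2), given two spongy sets $S_0,S_1\subset M$, I would encode each $S_j$ as the upper semicontinuous decomposition
$$\DD_j=\{\bar C:C\text{ a connected component of }M\setminus S_j\}\cup\{\{x\}:x\in S_j\}.$$
The axioms of a spongy set are precisely engineered to make $\DD_j$ a vanishing decomposition of $M$ whose non-singleton members are tame balls, hence cellular, and all pairwise ambiently homeomorphic since each is modeled on the same standard pair. Strong shrinkability is verified inside the tame neighborhoods $O(\bar C)$ by a standard shrinking argument (leaning on the Chapman--Edwards theory in the Hilbert cube case). The decomposition-equivalence theorem advertised in the abstract then supplies a homeomorphism $h:M\to M$ with $h(\DD_0)=\DD_1$, which in particular sends $S_0$ onto $S_1$.

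The main obstacle is clause (2): carefully verifying that $\DD_0,\DD_1$ fulfill the hypotheses of the decomposition-equivalence theorem uniformly across the cases $n\le 3$, $n=4$, $n\ge 5$, and $n=\w$, and in particular that the ambient homeomorphism between any two tame balls extends to a suitable equivalence of neighborhoods compatible with the shrinking. Clause (1) is technically lighter, being essentially a careful countable bookkeeping of the classical construction of the Menger sponge (when $n<\w$) and its Hilbert cube analogue (when $n=\w$); the nontrivial ingredient there is arranging (ii) and (iii) of the spongy-set definition simultaneously with density of $\bigcup_i\Int B_i$ in $M\setminus A$.
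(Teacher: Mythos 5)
Your strategy is the paper's own: deduce (3) formally from (1) and (2), prove (1) by inscribing a vanishing family of small tame balls with pairwise disjoint closures densely into $M\setminus\bar A$ (the paper does this via its existence theorem for $\K$-tame decompositions applied to the family $\K$ of tame balls), and prove (2) by encoding each spongy set as the vanishing decomposition whose non-degenerate elements are the closures of the complementary components and invoking the decomposition-equivalence machinery. The genuine gap is in (2), in the case of a finite-dimensional $\II^n$-manifold with non-empty boundary. The equivalence theorem you appeal to (the one ``advertised in the abstract'') is stated for Hilbert cube manifolds, and the general theorem behind it requires the ambient space to be \emph{strongly locally homogeneous}; an $\II^n$-manifold with $\partial M\ne\emptyset$ is not, since no homeomorphism carries an interior point to a boundary point, so the theorem cannot be applied to $M$ directly. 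The paper resolves this by observing that every tame ball misses $\partial M$, applying the equivalence result inside the $\IR^n$-manifold $M\setminus\partial M$ \emph{with quantitative control}, namely $(\Phi,\id)\prec\W$ for a cover $\U$ chosen so that $\St(x,\U)\subset O_d\bigl(x,d(x,\partial M)/2\bigr)$, and then extending $\Phi$ by the identity over $\partial M$; proving that this extension and its inverse are continuous at boundary points is a separate, non-trivial argument using the vanishing property. Your proposal omits this case entirely, and without the closeness control supplied by the refined form of the equivalence corollary the identity extension need not be continuous.

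A secondary but real issue: the hypothesis you cite for the equivalence step --- cellular, strongly shrinkable, vanishing decompositions whose non-singleton elements are pairwise ambiently homeomorphic --- is sufficient only in the Hilbert cube setting. The paper's closing remark exhibits two dense vanishing strongly shrinkable cellular decompositions of $\IR^2$ with pairwise ambiently homeomorphic non-degenerate elements that are \emph{not} topologically equivalent (an orientation obstruction). What the finite-dimensional argument actually needs is that the family of tame balls is a \emph{tame family}, in particular that it has the local shift property, which rests on the Annulus theorem in dimensions $2,3,4,\ge5$ (and on Chapman's $Z$-set extension theorem for $n=\w$), together with strong shrinkability of vanishing decompositions into tame balls (Daverman in finite dimensions, \v Cerin--Chapman for $n=\w$). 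You gesture at the case split over $n$, but the hypothesis you formulate would not carry the finite-dimensional case; the correct route is through the general theorem for tame families rather than the ``ambiently homeomorphic elements'' special case.
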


Two subsets $A,B$ of a topological space $X$ are called {\em ambiently homeomorphic} if the pairs $(X,A)$ and $(X,B)$ are homeomorphic. The latter means that $h(A)=B$ for some homeomorphism $h:X\to X$.

The spongy subsets $M^n_{n-1}$ of finite dimensional cubes $\II^n$ are typical examples of deterministic fractals (see \cite{Barnsley} and \cite{Falconer} for the theory of fractals). In contrast, spongy sets in Hilbert cube manifolds do not have such a fractal structure and they are Hilbert cube manifolds as well.

\begin{theorem}\label{t:spongeQ} Any spongy subset $S$ of a Hilbert cube manifold $M$ is a retract of $M$ and is homeomorphic to $M$.
\end{theorem}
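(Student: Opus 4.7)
The plan is to produce a cell-like retraction $r:M\to S$ and then invoke the cell-like approximation theorem for Hilbert cube manifolds (Chapman in the compact case, Toru\'nczyk and Ferry in the locally compact case) to conclude simultaneously that $S$ is a Hilbert cube manifold and that $r$ can be approximated by homeomorphisms, so that both statements of the theorem fall out together.

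First I would fix, for each component $C\in\C$, a homeomorphism of pairs $\varphi_C:(O(\bar C),\bar C)\to(\II^\w\times[0,2),\II^\w\times[0,1])$ provided by condition~(4). The topological boundary $\partial_M\bar C$ corresponds to $\II^\w\times\{1\}$, and a short argument using conditions~(1)--(3) shows $\bar C\cap S=\partial_M\bar C$: the inclusion $\partial_M\bar C\subset S$ holds because disjointness of distinct closures~(3) prevents $\partial_M\bar C$ from meeting any component $C'\in\C$, while the reverse inclusion reduces to checking that $C$ coincides with the interior of $\bar C$ in $M$. Composing $\varphi_C$ with the vertical projection $(y,t)\mapsto(y,1)$ and $\varphi_C^{-1}$ produces a retraction $r_C:\bar C\to\partial_M\bar C$ whose non-singleton fibers are the arcs $\varphi_C^{-1}(\{y\}\times[0,1])$. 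Gluing the $r_C$'s with $\id_S$ over the overlap $\bar C\cap S=\partial_M\bar C$, where both definitions agree with the identity, gives a well-defined map $r:M\to S$.

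The heart of the argument is continuity of $r$. Continuity is immediate on the open set $M\setminus S=\bigcup_{C\in\C}C$ because each point lies in an $O(\bar C)$ on which $r=r_C$ is continuous. The delicate case is a point $x\in S$ and a sequence $x_n\to x$ with $x_n\in\bar C_n$ for pairwise distinct components $C_n$; here I would invoke the vanishing condition~(2). Applied to the open cover of $M$ by balls of a fixed radius $\e$, it yields that the subfamily of $\bar C$ with diameter at least $2\e$ is locally finite, so $\diam(\bar C_n)\to 0$. Since $r(x_n)\in\bar C_n$, we get $\dist(r(x_n),x_n)\le\diam(\bar C_n)\to 0$ and hence $r(x_n)\to x=r(x)$. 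This proves that $r$ is a continuous retraction, so $S$ is a retract of $M$ and, as such, an ANR. A similar sequential compactness argument using vanishing shows that $r$ is proper.

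For the homeomorphism $M\cong S$, note that every fiber of $r$ is either a singleton or a closed arc, hence cell-like; thus $r:M\to S$ is a proper cell-like surjection from the Hilbert cube manifold $M$ onto the ANR $S$. The cell-like approximation theorem then implies that $S$ is itself a Hilbert cube manifold and that $r$ can be approximated arbitrarily closely by homeomorphisms $M\to S$, yielding $M\cong S$. The main obstacle I foresee is the continuity analysis at accumulation points of $\{\bar C\}_{C\in\C}$, where the vanishing condition~(2) is indispensable; secondary subtleties are verifying the identification $\bar C\cap S=\partial_M\bar C$ directly from the axioms of a spongy set, establishing properness of $r$, and citing the precise form of the cell-like near-homeomorphism theorem valid for possibly non-compact Hilbert cube manifolds.
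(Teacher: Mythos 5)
Your construction of the retraction $r$ is exactly the paper's: fiberwise collapse of each tame ball $\bar C$ onto $\partial \bar C\cong\II^\w\times\{1\}$ along the arcs $\varphi_C^{-1}(\{y\}\times[0,1])$, glued with the identity on $S$, with continuity at points of $S$ coming from the vanishing property. That part is fine (and, contrary to your assessment, it is the easy half of the argument).

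The genuine gap is the last step. The theorem you invoke --- ``a proper cell-like surjection from a Hilbert cube manifold onto an ANR implies the target is a Hilbert cube manifold and the map is a near homeomorphism'' --- is false. The cell-like approximation theorem for $\II^\w$-manifolds (Chapman, Corollary 43.2 of \cite{Chap}) requires \emph{both} the domain and the range to be Hilbert cube manifolds. With only an ANR target the statement fails badly: the projection $\II^2\times\II^\w\to\II^2$ (or even the constant map $\II^\w\to\{\ast\}$) is a proper cell-like surjection from a Hilbert cube manifold onto a compact ANR, yet the target is not a Hilbert cube manifold and the map is not approximable by homeomorphisms; more generally, by Edwards' theorem every compact ANR $X$ is the cell-like image of the $\II^\w$-manifold $X\times\II^\w$. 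Nothing in your argument uses the special structure of the fibers of $r$ beyond cell-likeness, so the conclusion that $S$ is an $\II^\w$-manifold does not follow; and note that the union of the nondegenerate fibers of $r$ is $\bigcup_{C\in\C}\bar C$, which has nonempty interior, so Toru\'nczyk's theorem on decompositions supported in countable unions of $Z$-sets does not apply either. This missing step is precisely the bulk of the paper's proof: it verifies directly, via Toru\'nczyk's characterization theorem \cite{Tor80} (the disjoint-cells property), that $S$ is a Hilbert cube manifold --- approximating a map $\II^\w\times\{0,1\}\to S$ by a map into $M$ with disjoint images, pushing it off the finitely many ``large'' tame balls using the $Z$-set property of their boundaries $\partial C\cong\II^\w$ and Chapman's Theorems 8.1 and 18.2, and controlling the small balls by the vanishing property --- and only then applies Corollary 43.2 of \cite{Chap} to the cell-like map $r$ between the two $\II^\w$-manifolds $M$ and $S$ to get $M\cong S$. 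You would need to supply an argument of this kind to close the gap.
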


This theorem will be proved in Section~\ref{s:spongeQ}.
Theorem~\ref{main1} will be proved in Section~\ref{s:main1} after long preparatory work in Sections~\ref{s:dec}--\ref{s:main2}. The principal tool in the proof of Theorem~\ref{main1} is Theorem~\ref{main2} on the topological equivalence of $\K$-tame decompositions of strongly locally homogeneous completely metrizable spaces, discussed in Section~\ref{s:dec} and proved in Section~\ref{s:main2}. In Section~\ref{s:cellular} we shall apply Theorem~\ref{main2} to prove Corollaries~\ref{c4.8} and \ref{c4.9} establishing the topological equivalence of some vanishing cellular decompositions of Hilbert cube manifolds.

\section{Topological equivalence of certain decompositions of topological spaces}\label{s:dec}

In this section we discuss the problem of the topological equivalence of decompositions of completely metrizable spaces. For the theory of decompositions of finite dimensional manifolds we refer the reader to Daverman's monograph \cite{Dav}. Now let us fix some notation.

For a subset $A$ of a topological space $X$ we shall denote by $\bar A$, $\Int(A)$, and $\partial A=\bar A\setminus\Int(A)$  the closure, the  interior, and the boundary  of $A$ in $X$, respectively. For a metric space $(X,d)$, a point $x\in X$ and a subset $A\subset X$ we put $d(x,A)=\inf_{a\in A}d(x,a)$ and $\diam(A)=\sup\{d(a,b):a,b\in A\}$. For a real number $\e$ we shall denote by $O_d(x,\e)=\{y\in X:d(x,y)<\e\}$ and $O_d(A,\e)=\{x\in X:d(x,A)<\e\}=\bigcup_{a\in A}O_d(a,\e)$  the open $\e$-neighborhoods of $a$ and $A$ in the metric space $X$.

Let $\A,\BB$ be two families $\A,\BB$ of subsets of a space $X$. We shall write $\A\prec\BB$ and say that the family $\A$ refines the family $\BB$ if each set $A\in\A$ is contained in some set $B\in\BB$.

A subset $A\subset X$ is called {\em $\BB$-saturated} if $A$ coincides with its {\em $\BB$-star} $\St(A,\BB)=\bigcup\{B\in\BB:A\cap B\ne \emptyset\}$. The family $\A$ is called {\em $\BB$-saturated} if each set $A\in\A$ is $\BB$-saturated. The family $\St(\A,\BB)=\{\St(A,\BB):A\in\BB\}$ will be called the {\em $\BB$-star} of the family $\A$, and $\St(\A)=\St(\A,\A)$ is the {\em star} of $\A$.

Given functions $f,g:Z\to X$ we write $(f,g)\prec\A$ if for each point $z\in Z$ with $f(z)\ne g(z)$ the doubleton $\{g(z),f(z)\}$ lies in some set $A\in\A$. This definition implies that $f(z)=g(z)$ for each point $z\in Z\setminus \big(f^{-1}(\bigcup\A)\cap g^{-1}(\bigcup\A)\big)$. If $d$ is a metric on the space $X$, then we denote by $d(f,g)=\sup_{z\in Z}d(f(z),g(z))$  the $d$-distance between the functions $f,g$. Sometimes by $d(f,g)$ we shall also understand the function $d(f,g):X\to \IR$, $d(f,g):x\mapsto d(f(x),g(x))$.

A topological space $X$ is called {\em completely metrizable} if its topology is generated by a complete metric. By \cite[4.3.26]{En}, a topological space is completely metrizable if and only if it is metrizable and \v Cech complete. It is well-known \cite[5.1.8]{En} that each metrizable space $X$ is {\em collectionwise normal} in the sense that for each discrete family $\F$ of closed subsets of $X$ there is a discrete family $\{U_F\}_{F\in\F}$ of open subsets of $X$ such that $F\subset U_F$ for all $F\in\F$.

By a {\em decomposition} of a topological space $X$ we understand a cover $\DD$ of $X$ by pairwise disjoint non-empty compact subsets. For each decomposition $\DD$ we can consider the quotient map $q_\DD:X\to \DD$ assigning to each point $x\in X$ the unique compact set $q(x)\in\DD$ that contains $x$. The quotient map $q_\DD$ induces the quotient topology on $\DD$ turning $\DD$ into a topological space called the {\em decomposition space} of the decomposition $\DD$. Sometimes to distinguish a decomposition $\DD$ from its decomposition space we shall denote the latter space by $X/\DD$.

A decomposition $\DD$ of a topological space $X$ is {\em upper semicontinuous} if for each closed subset $F\subset X$ its {\em $\DD$-saturation} $\St(F,\DD)=\bigcup\{D\in\DD:D\cap F\ne\emptyset\}$ is closed in $X$. It is easy to see that a decomposition $\DD$ of $X$ is upper semicontinuous if and only if the quotient map $q_\DD:X\to X/\DD$ is closed if and only if the quotient map $q_\DD$ is perfect (the latter means that $q_\DD$ is closed and for each point $y\in X/\DD$ the preimage $q_\DD^{-1}(y)$ is compact). Since the (complete) metrizability is preserved by perfect maps (see \cite[3.9.10 and 4.4.15]{En}), we get the following lemma (cf. Proposition 2 \cite{Dav}).

\begin{lemma}\label{l1} For any upper semicontinuous decomposition $\DD$ of a (completely) metrizable space $X$ the decomposition space $X/\DD$ is (completely) metrizable.
\end{lemma}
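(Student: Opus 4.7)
The plan is essentially sketched in the paragraph preceding the lemma: reduce to the fact that perfect maps preserve (complete) metrizability, and then invoke Engelking 3.9.10 and 4.4.15.

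First I would verify that the quotient map $q_\DD\colon X\to X/\DD$ is perfect. By definition of a decomposition, each fiber $q_\DD^{-1}(D)=D$ is compact, so only closedness needs to be checked. Given a closed subset $F\subset X$, observe that
\[
 q_\DD^{-1}(q_\DD(F))=\St(F,\DD).
\]
Upper semicontinuity of $\DD$ guarantees that $\St(F,\DD)$ is closed in $X$. Since $q_\DD$ is a quotient map, a subset $V\subset X/\DD$ is closed if and only if $q_\DD^{-1}(V)$ is closed in $X$, hence $q_\DD(F)$ is closed in $X/\DD$. Thus $q_\DD$ is closed with compact fibers, i.e.\ perfect.

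Next I would lift metrizability through the perfect surjection $q_\DD$. By \cite[4.4.15]{En} (Hanai--Morita--Stone), the image of a metrizable space under a closed map with separable (in particular compact) fibers is metrizable; equivalently, metrizability is preserved by perfect maps. Applied to $q_\DD\colon X\to X/\DD$, this yields metrizability of the decomposition space $X/\DD$ whenever $X$ is metrizable.

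Finally I would treat the \emph{complete} metrizability case. By \cite[3.9.10]{En}, \v{C}ech-completeness is preserved by perfect maps (with \v{C}ech-complete range being automatic once the domain is \v{C}ech-complete and the map is perfect onto a Tychonoff space). Combining this with the already-established metrizability of $X/\DD$, and using the characterization (stated in the excerpt) that a metrizable space is completely metrizable iff it is \v{C}ech-complete, we conclude that $X/\DD$ is completely metrizable whenever $X$ is. This completes both cases of the lemma.

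The only potential pitfall is verifying that the references from Engelking apply in the stated form; since $q_\DD$ is a continuous surjection onto a Hausdorff (indeed $T_1$) quotient space with compact fibers and is closed, both cited theorems apply directly, so no genuine obstacle remains.
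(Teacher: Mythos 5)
Your argument is exactly the paper's: the paper observes that upper semicontinuity makes $q_\DD$ a perfect map and then cites the same two results from Engelking (3.9.10 and 4.4.15) for the preservation of (complete) metrizability, together with the characterization of complete metrizability via \v Cech-completeness. Your write-up just makes this explicit, so it is correct and follows the same route.
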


Let us recall that a decomposition $\DD$ of a topological space $X$ is called  {\em vanishing} if for each open cover $\U$ of $X$ the subfamily $\DD'=\{D\in\DD:\forall U\in\U\;\;D\not\subset U\}$ is discrete in $X$ in the sense that each point $x\in X$ has a neighborhood $O_x\subset X$ that meets at most one set $D\in\DD'$.

Each  vanishing disjoint family $\C$ of non-empty compact subsets of a topological space $X$ generates the  vanishing decomposition
$$\dot\C=\C\cup\big\{\{x\}:x\in X\setminus\textstyle{\bigcup}\C\big\}$$of the space $X$.
In particular, each non-empty compact set $K\subset X$ induces the vanishing decomposition $\{K\}\cup\big\{\{x\}:x\in X\setminus K\}$ whose decomposition space will be denoted by $X/K$.
By $q_K:X\to X/K$ we shall denote the corresponding quotient map.

The following (probably known) lemma generalizes Proposition 3 of \cite{Dav}.

\begin{lemma}\label{l2} Each vanishing decomposition $\DD$ of a regular space $X$ is upper semicontinuous.
\end{lemma}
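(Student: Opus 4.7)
The plan is to show that $X\setminus\St(F,\DD)$ is open for an arbitrary closed set $F\subset X$. Fix $x\in X\setminus\St(F,\DD)$ and let $D_x\in\DD$ be the unique element of $\DD$ containing $x$; by the choice of $x$, we have $D_x\cap F=\emptyset$. Since $D_x$ is compact and $F$ is closed in the regular space $X$, a standard separation argument (covering $D_x$ by finitely many regularity pairs) yields disjoint open sets $U\supset D_x$ and $V\supset F$.

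To exploit the vanishing property, I would consider the open cover $\U=\{U,V,X\setminus(D_x\cup F)\}$ of $X$ (noting that $D_x\cup F$ is closed, and every point of $X$ lies either in $D_x\subset U$, in $F\subset V$, or in the complement $X\setminus(D_x\cup F)$). The family $\DD'=\{D\in\DD:D\not\subset U,\ D\not\subset V,\ D\not\subset X\setminus(D_x\cup F)\}$ is then discrete in $X$. The crucial structural observation is the following dichotomy: if $D\in\DD$ satisfies $D\cap F\neq\emptyset$, then either $D\subset V$, or $D\in\DD'$. Indeed, such a $D$ cannot lie in $X\setminus(D_x\cup F)$ (since $D$ meets $F$), and cannot lie in $U$ (since $D$ meets $F\subset V$ while $U\cap V=\emptyset$).

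By discreteness of $\DD'$, choose an open neighborhood $W_x\subset U$ of $x$ meeting at most one element of $\DD'$. If no such element meets $W_x$, or the unique element $E\in\DD'$ meeting $W_x$ satisfies $E\cap F=\emptyset$, then the dichotomy immediately gives $W_x\cap\St(F,\DD)=\emptyset$: no $D\subset V$ can meet $W_x\subset U$, and no $D\in\DD'$ with $D\cap F\neq\emptyset$ meets $W_x$ either. The one technical case is when the unique $E\in\DD'$ meeting $W_x$ does satisfy $E\cap F\neq\emptyset$; then $E\neq D_x$ (because $D_x\subset U$ implies $D_x\notin\DD'$), hence $x\notin E$ by disjointness of $\DD$, so a second application of regularity (point vs.\ compact set) supplies an open $W\ni x$ with $W\cap E=\emptyset$, and $N_x=W_x\cap W$ is the desired neighborhood of $x$ disjoint from $\St(F,\DD)$. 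The only potential obstacle is a clean handling of this bad case, which is resolved precisely by this double use of regularity.
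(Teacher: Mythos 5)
Your argument is correct and follows essentially the same route as the paper: separate the compact element $D_x$ through $x$ from the closed set $F$ using regularity, apply the vanishing property to the resulting cover, and use discreteness of the exceptional subfamily $\DD'$ to produce a neighborhood of $x$ missing $\St(F,\DD)$. The only cosmetic difference is that the paper deletes the closed union $\bigcup\DD'$ of the discrete family from the separating neighborhood of $D_x$, whereas you invoke discreteness pointwise at $x$ and then dispose of the single possible exceptional element by a further shrinking; both versions work.
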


\begin{proof} Given a closed subset $F\subset X$ we need to check that its $\DD$-saturation $\St(F,\DD)=q_\DD^{-1}(q_\DD(F))$ is closed in $X$. Fix any point $x\in X\setminus \St(F,\DD)$ and let $D_x=q_\DD(x)$ be the unique element of the decomposition $\DD$, which contains the point $x$. By the regularity of the space $X$, the compact subset $D_x\subset X\setminus F$ has an open neighborhood $V\subset X$ such that $\overline{V}\cap F=\emptyset$. Since the decomposition $\DD$ is vanishing, for the open cover $\U=\{X\setminus F,X\setminus \overline{V}\}$ of $X$ the family $$\DD'=\{D\in\DD:D\not\subset X\setminus F,\;D\not\subset X\setminus \overline{V}\}=\{D\in\DD:D\cap F\ne\emptyset\ne D\cap\overline{V}\}$$ is discrete in $X$ and hence its union $D'=\bigcup\DD'$ is closed in $X$. Since $D_x\notin \DD'$, we conclude that $D_x\cap D'=\emptyset$ and hence $U_x=V\setminus D'$ is an open neighborhood of $x$ missing the set $\St(F,\DD)$ and therefore the latter set is closed in $X$.
\end{proof}

A decomposition $\DD$ of a space $X$ will be called {\em dense} (resp. {\em discrete}) if its non-degeneracy part $$\DD^\circ=\{D\in\DD:|D|>1\}$$ is dense (resp. closed and discrete) in the decomposition space $\DD=X/\DD$.

A decomposition $\DD$ of a topological space $X$ is called
\begin{itemize}
\item {\em shrinkable} if for each $\DD$-saturated open cover $\U$ of $X$ and each open cover $\V$ of $X$ there is a homeomorphism $h:X\to X$ such that $(h,\id_X)\prec\U$ and $\{h(D):D\in\DD\}\prec\V$;
\item {\em strongly shrinkable} if for each $\DD$-saturated open set $U\subset X$ the decomposition $\DD|U=\{D\in\DD:D\subset U\}$ of $U$ is shrinkable.
\end{itemize}
A compact subset $K$ of a topological space $X$ is called {\em locally shrinkable}  if for each neighborhood $O(K)\subset X$ and any open cover $\V$ of $O(K)$ there is a homeomorphism $h:X\to X$ such that $h|X\setminus O(K)=\id$ and $h(K)$ is contained in some set $V\in\V$. It is easy to see that a compact subset $K\subset X$ is locally shrinkable if and only if  the decomposition $\{K\}\cup\big\{\{x\}:x\in X\setminus K\big\}$ of $X$ is strongly shrinkable (cf. \cite[p.42]{Dav}).

(Strongly) shrinkable decompositions are tightly connected with (strong) near homeomorphisms.

A map $f:X\to Y$ between topological spaces will be called a
\begin{itemize}
\item a {\em near homeomorphism} if for each open cover $\U$ of $Y$ there is a homeomorphism $h:X\to Y$ such that $(h,f)\prec\U$;
\item a {\em strong near homeomorphism} if for each open set $U\subset Y$ the map $f|f^{-1}(U):f^{-1}(U)\to U$ is a near homeomorphism.
\end{itemize}

The following Shrinkability Criterion was proved in \cite[Theorem 2.6]{Dav}.

\begin{theorem}[Shrinkability Criterion]\label{t:shrink} An upper semicontinuous decomposition $\DD$ of a completely metrizable space $X$ is (strongly) shrinkable if and only if the quotient map $q_\DD:X\to X/\DD$ is a (strong) near homeomorphism.
\end{theorem}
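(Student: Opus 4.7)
The plan is to prove the two directions of the equivalence separately and to observe at the outset that the ``strong'' version reduces to the ordinary one by applying it to each $\DD$-saturated open subset. Indeed, for a $\DD$-saturated open $U\subset X$ with $V=q_\DD(U)$, one has $U=q_\DD^{-1}(V)$, and $q_\DD|U$ is the quotient map of the upper semicontinuous decomposition $\DD|U$, so each strong condition is literally the non-strong condition applied to the pair $(U,\DD|U)$.

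For the implication from shrinkability to near-homeomorphism I would run Bing's iterated shrinking. Fix a complete metric $\rho$ on $Y=X/\DD$ (available by Lemma~\ref{l1}) and an open cover $\U$ of $Y$. Build inductively a sequence of continuous surjections $f_n = q_\DD \circ H_n : X \to Y$, where $H_n = g_1 \circ \cdots \circ g_n$ is a composition of homeomorphisms of $X$ with $H_0 = \id_X$. At stage $n$ the decomposition $H_{n-1}^{-1}(\DD)$ of $X$ is again shrinkable, since shrinkability is preserved by self-homeomorphisms; applying its shrinkability to an $H_{n-1}^{-1}(\DD)$-saturated open cover whose $f_{n-1}$-image is $2^{-n}$-fine, together with an open cover $\V_n$ of $X$ whose $f_{n-1}$-image is likewise $2^{-n}$-fine, produces $g_n$ ensuring $\rho(f_n,f_{n-1})<2^{-n}$ and $\diam_\rho f_n(D)<2^{-n}$ for every $D\in\DD$. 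The sequence $(f_n)$ is then uniformly Cauchy, converging to a continuous surjection $f_\infty:X\to Y$ with $(f_\infty,q_\DD)\prec\U$. The crucial step is to show $f_\infty$ is injective: if $f_\infty(x)=f_\infty(x')$, the decomposition elements containing $H_n(x)$ and $H_n(x')$ shrink under $f_n$ into arbitrarily small $Y$-sets, which combined with completeness of a compatible metric on $X$ forces $x=x'$. Perfectness of $q_\DD$ then delivers closedness of $f_\infty$, hence a homeomorphism.

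For the converse, from near-homeomorphism to shrinkability, the trick is to exploit two approximating homeomorphisms. Given a $\DD$-saturated open cover $\U$ of $X$ and an open cover $\V$ of $X$, first invoke the near-homeomorphism hypothesis to fix a homeomorphism $g:X\to Y$ so close to $q_\DD$ that, by continuity of $g^{-1}$ and upper semicontinuity of $\DD$, the preimages $g^{-1}(W)$ of sets $W$ in a suitably chosen open cover $\W$ of $Y$ lie inside $\V$-elements, and $g^{-1}(q_\DD(x))$ lies in a common $\U$-element with $x$ for every $x\in X$. Apply the hypothesis a second time to obtain $g':X\to Y$ with $(g',q_\DD)\prec\W$ and set $h=g^{-1}\circ g'$. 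Then $h$ is a self-homeomorphism of $X$; $h(D)=g^{-1}(g'(D))\subset g^{-1}(\St(q_\DD(D),\W))$ lies inside a single $V\in\V$; and $h(x)$ is close to $g^{-1}(q_\DD(x))$, hence to $x$, so both lie in a common $U\in\U$.

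The main obstacle I anticipate is the injectivity of the limit map in the forward direction: one must orchestrate the iterative shrinking so that the $Y$-diameters of the images $f_n(D)$ genuinely separate distinct points of $X$ in the limit, and this is where completeness of the metric on $X$ is essential. Once injectivity is secured the rest of the argument (closedness via perfectness of $q_\DD$, surjectivity via uniform limits) is routine, but injectivity has to be built into the bookkeeping of the cover choices at every stage.
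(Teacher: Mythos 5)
First, a point of reference: the paper does not prove Theorem~\ref{t:shrink} at all --- it quotes it from Daverman's monograph (Theorem 2.6 there) --- so your proposal is to be measured against that classical argument rather than an internal proof. Your reduction of the ``strong'' statements to the plain ones via the correspondence between $\DD$-saturated open subsets of $X$ and open subsets of $X/\DD$ is correct, and your converse direction (two approximating homeomorphisms, $h=g^{-1}\circ g'$) is the standard route; it only needs the routine repairs that $\W$ be chosen so that $g^{-1}(\St(y,\W))$, not just $g^{-1}(W)$ for a single $W$, lies in a $\V$-element, and that putting $x$ and $h(x)$ into a common element of $\U$ exploits the saturation $U=q_\DD^{-1}(q_\DD(U))$ together with a (double) star-refinement of the cover $\{q_\DD(U):U\in\U\}$ of $X/\DD$.

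The forward direction, however, contains a genuine gap, and in fact the stage-$n$ conditions you impose are self-defeating. If $\rho(f_n,f_{n-1})<2^{-n}$ and $\diam_\rho f_n(D)<2^{-n}$ for \emph{every} $D\in\DD$ and every $n$, then for $x,x'$ lying in one non-degenerate $D$ the triangle inequality gives $\rho(f_\infty(x),f_\infty(x'))\le 3\cdot 2^{-n}$ for all $n$, so $f_\infty$ is constant on each $D$ and cannot be injective. What shrinkability of $H_{n-1}^{-1}(\DD)$ actually delivers is smallness \emph{in $X$} of the shrunk sets, and with your convention $H_n=H_{n-1}\circ g_n$ these shrunk sets are not even the fibers of $f_n$ (one needs $H_n=H_{n-1}\circ g_n^{-1}$ for that); the correct requirements are that the fibers $f_n^{-1}(y)=H_n^{-1}(q_\DD^{-1}(y))$ have $X$-diameter $<2^{-n}$ while $f_n$ stays $\rho$-Cauchy, with the saturated covers at each stage chosen so fine that all later adjustments keep every fiber inside a set whose $f_{n-1}$-preimage is small. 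Your injectivity sketch (``small $Y$-sets plus completeness of a metric on $X$ forces $x=x'$'') does not work: smallness in $Y$ gives no control on $X$-distances --- that is exactly the non-injectivity of $q_\DD$ --- and completeness is irrelevant to injectivity. Likewise, closedness of $f_\infty$ does not follow ``routinely'' from perfectness of $q_\DD$: without local compactness a map uniformly close to a perfect map need not be closed or proper, since closed $\e$-neighborhoods of compacta need not be compact. The standard remedy, used in Daverman's proof and mirrored in this paper's own proof of Theorem~\ref{t5} (conditions $(8_n)$, which control both $\Phi_n$ and $\Phi_n^{-1}$), is two-sided fiber control, so that $f_\infty^{-1}$ is itself a uniform limit of multivalued maps shrinking to singletons and hence continuous. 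Until this bookkeeping is set up correctly, the hard half of the criterion remains unproved.
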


For two decompositions $\A\prec\BB$ of a space $X$  we shall denote by $q^\A_\BB:X/\A\to X/\BB$ the unique map making the following diagram commutative:
$$\xymatrix{
&X\ar[rd]^{q_\BB}\ar[ld]_{q_\A}&\\
X/\A\ar[rr]_{q^\A_\BB}&&X/\BB
}$$

We shall say that a decomposition $\A$ of a topological space $X$ is {\em topologically equivalent} to a decomposition $\BB$ of a topological space $Y$ if there is a homeomorphism $\Phi:X\to Y$ such that the
decomposition $\Phi(\A)=\{\Phi(A):A\in\A\}$ of $Y$ is equal to the decomposition $\BB$. This happens if and only if there is a unique homeomorphism $\varphi:X/\A\to Y/\BB$ making the diagram
$$\xymatrix{
X\ar[r]^{\Phi}\ar[d]_{q_\A}&Y\ar[d]^{q_\BB}\\
X/\A\ar[r]_{\varphi}&Y/\BB
}$$commutative. In this case we shall say that the homeomorphism $\Phi$ is  $(\A,\BB)$-factorizable and the homeomorphism $\varphi:X/\A\to Y/\BB$ is $(\A,\BB)$-liftable.

More precisely, we define a homeomorphism
$\varphi:X/\A\to Y/\BB$ (resp. $\Phi:X\to Y$) to be {\em $(\A,\BB)$-liftable} (resp. $(\A,\BB)$-{\em factorizable}~) if there is a homeomorphism $\Phi:X\to Y$ (resp. $\varphi:X/\A\to Y/\BB$~) such that
$q_\BB\circ \Phi=\varphi\circ q_\A$. It is clear that each $(\A,\BB)$-liftable homeomorphism $\varphi:X/\A\to Y/\BB$ maps the non-degeneracy part $\A^\circ$ of the decomposition $\A$ onto the non-degeneracy part $\BB^\circ$ of the decomposition $\BB$. So, $\varphi:(\A,\A^\circ)\to (\BB,\BB^\circ)$ is a homeomorphism of pairs.

Observe that two decompositions $\A,\BB$ of a topological space $X$ are  topologically equivalent if and only if there is an $(\A,\BB)$-factorizable homeomorphism $\Phi:X\to X$ if and only if there exists an $(\A,\BB)$-liftable homeomorphism $\varphi:X/\A\to X/\BB$ between the decomposition spaces.

We shall be interested in finding conditions on vanishing decompositions $\A$, $\BB$ of a space $X$, which guarantee that the set of $(\A,\BB)$-liftable homeomorphisms is dense in the space $\HH(\A,\BB)$ of all homeomorphisms between the decomposition spaces $\A=X/\A$ and $\BB=X/\BB$.

The homeomorphism space $\HH(\A,\BB)$ will be endowed with the {\em limitation topology} \cite{Chig} whose neighborhood base at a homeomorphism $f:X\to Y$ consists of the sets
$$N(f,\U)=\{g\in \HH(X,Y):(f,g)\prec\U\}$$ where $\U$ runs over all open covers of $Y$.

The following definition of a tame family will be used in Definition~\ref{d:d-Ktame} of a $\K$-tame decomposition.

\begin{definition}\label{d:K-tame} Let $\K$ be a family of compact subsets of a topological space $X$.
We shall say that the family $\K$
\begin{itemize}
\item is {\em ambiently invariant} if for each homeomorphism $h:X\to X$ and each set $K\in\K$ we get $h(K)\in\K$;
\item has the {\em local shift property} if for any point $x\in X$ and a neighborhood $O_x\subset X$ there is a neighborhood $U_x\subset O_x$ of $x$ such that for any sets $A,B\in\K$ with $A,B\subset U_x$ there is a homeomorphism $h:X\to X$ such that $h(A)=B$ and $h|X\setminus O_x=\id|X\setminus O_x$;
\item {\em tame} if $\K$ is ambiently invariant, consists of locally shrinkable sets, has the local shift property, and each non-empty open subset $U\subset X$ contains a set $K\in\K$.
\end{itemize}
\end{definition}

Now we can define $\K$-tame decompositions.

\begin{definition}\label{d:d-Ktame} Let $\K$ be a tame family of compact subsets of a Polish space $X$. A decomposition $\DD$ of $X$ is called {\em $\K$-tame} if $\DD$ is vanishing, strongly shrinkable, and $\DD^\circ\subset\K$.
\end{definition}

The following theorem that will be proved in Section~\ref{s:exist} yields many examples of $\K$-tame decompositions.

\begin{theorem}\label{t:exist} Let $\K$ be a tame family of compact subsets of a completely metrizable space $X$ such that each set $K\in\K$ contains more than one point. For any open set $U\subset X$ there is a $\K$-tame decomposition $\DD$ of $X$ such that $\bigcup\DD^\circ$ is a dense subset of $U$.
\end{theorem}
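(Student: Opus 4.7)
The plan is to realize $\DD$ as $\dot\C$ for a suitable pairwise disjoint vanishing family $\C\subseteq\K$ of compact subsets of $U$. Then $\DD^\circ=\C\subseteq\K$ and $\dot\C$ is vanishing by construction, so the only remaining issue is the strong shrinkability of $\dot\C$.

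For the construction of $\C$, fix a complete metric $d$ on $X$ and, using the metrizability of $X$, a $\sigma$-locally-finite open base $\BB=\bigsqcup_{n\geq 1}\BB_n$ of the subspace $U$ with each $\BB_n$ locally finite in $U$. I build $\C$ by induction on $n$ and, within stage $n$, by transfinite induction along a well-ordering of $\BB_n$. At the step for $B\in\BB_n$, the previously selected $K_{B'}$ form a locally finite family of closed subsets of $U$; setting $W_B:=\Int(B\setminus\bigcup_{B'<B}K_{B'})$, if $W_B\ne\emptyset$ I pick $K_B\in\K$ with $K_B\subseteq W_B$ and $\diam(K_B)<1/n$. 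Such a $K_B$ exists because $W_B$ contains an open ball of diameter below $1/n$, into which the fourth clause of Definition~\ref{d:K-tame} places a member of $\K$. Let $\C$ be the collection of all $K_B$. The density of $\bigcup\C$ in $U$ follows by contradiction: any nonempty open $V\subseteq U$ avoiding $\bigcup\C$ would contain some $B\in\BB$ with $W_B=B\ne\emptyset$, forcing $K_B\subseteq V$. The vanishing of $\dot\C$ is a standard count: for $x\in X$ and an open cover $\U$ with $O_d(x,\e)\subseteq U\in\U$, any $K\in\C$ meeting $O_d(x,\e/2)$ and not contained in a member of $\U$ must have $\diam(K)\geq\e/2$, hence arose at a stage $n\leq 2/\e$; local finiteness of each $\BB_n$ bounds the count near $x$, and discreteness is obtained after one further shrinking.

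For strong shrinkability, given $V\subseteq X$ that is $\dot\C$-saturated open, a $\dot\C|V$-saturated open cover $\U'$ of $V$, and an open cover $\V'$ of $V$, the goal is a homeomorphism $h:V\to V$ with $(h,\id_V)\prec\U'$ and $\{h(D):D\in\dot\C|V\}\prec\V'$. Pass to a star refinement $\V''$ of $\V'$; the ``bad'' family $\DD':=\{D\in\dot\C|V:D\not\subseteq V''\text{ for any }V''\in\V''\}\subseteq\C$ is discrete in $V$ by vanishing. For each $D\in\DD'$, saturation of $\U'$ supplies $U_D\in\U'$ with $D\subseteq U_D$, and by metrizability, vanishing, and collectionwise normality I obtain pairwise disjoint $\dot\C$-saturated open neighborhoods $W_D\subseteq U_D$ of $D$, shrunk via $\dist(D,D')>0$ and a local Lebesgue-number argument so that every $D'\in\dot\C|V$ with $D'\subseteq W_D$ and $D'\ne D$ is contained in some $V''\in\V''$ disjoint from $D$. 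Local shrinkability of $D\in\K$, equivalent via Theorem~\ref{t:shrink} to the strong shrinkability of the singleton-plus-$D$ decomposition of $W_D$, then supplies a homeomorphism $h_D:V\to V$ supported in $W_D$ with $h_D(D)\subseteq V''_D\in\V''$ and with a displacement control obtained by applying the Shrinkability Criterion to the $(\{D\}\cup\text{sing})$-saturated cover $\U^*_D:=\{V''\in\V'':V''\cap D=\emptyset\}\cup\{W_D\}$ of $W_D$.

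Setting $h=h_D$ on each $W_D$ and $h=\id$ elsewhere yields a homeomorphism of $V$ by disjointness; $(h,\id_V)\prec\U'$ is immediate because each moved point stays inside its $W_D\subseteq U_D\in\U'$. The refinement $\{h(D'):D'\in\dot\C|V\}\prec\V'$ is verified by the three cases opened up by the $\dot\C$-saturation of the $W_D$: (i) if $D'=D\in\DD'$, then $h(D)=h_D(D)\subseteq V''_D\subseteq V'$ for some $V'\in\V'$; (ii) if $D'\subseteq W_D$ with $D'\ne D$, then $D'\subseteq V''_0\in\V''$ with $V''_0\cap D=\emptyset$ by the choice of $W_D$, and the displacement control via $\U^*_D$ places $h_D(D')\subseteq\St(V''_0,\V'')\subseteq V'\in\V'$ by the star refinement; and (iii) if $D'$ is disjoint from every $W_D$, then $h(D')=D'\subseteq V'\in\V'$ already. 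The principal obstacle is the simultaneous realization of both the shrinking condition $h_D(D)\subseteq V''_D$ and the displacement control on $h_D$: these do not both follow from the bare local-shrinkability definition, but they are jointly encoded by the Shrinkability Criterion applied to the strongly shrinkable singleton-plus-$D$ decomposition, which is the conceptual heart of the verification.
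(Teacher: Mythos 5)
Your construction of the decomposition itself is close in spirit to the paper's (which also adds, stage by stage, a uniformly small discrete subfamily of $\K$), but already here there is a secondary gap: your vanishing argument controls only points of $U$, because each $\BB_n$ is locally finite merely in the subspace $U$. At a point of $\partial U$ infinitely many stage-$n$ sets $K_B$ of diameter comparable to $1/n$ may accumulate, and nothing in your selection rule prevents this, so $\dot\C$ need not be vanishing in $X$. The paper avoids this by putting the stage-$n$ elements into balls of radius $<2^{-n-3}$ centered at a $2^{-n-1}$-separated set, which makes each stage uniformly separated and hence discrete in all of $X$; your version is fixable similarly, e.g.\ by also requiring $\diam(K_B)<\frac12 d(K_B,X\setminus U)$.

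The main gap is in the strong shrinkability verification, at exactly the point you call the conceptual heart. Shrinkability of $\dot\C|V$ demands that the images of \emph{all} elements be inscribed in $\V'$, including the many elements $D'\ne D$ that lie inside the support $W_D$ (they exist, since the decomposition is dense and vanishing). Your mechanism does not control them, for two concrete reasons. First, the displacement control is vacuous: since $W_D$ is itself a member of $\U^*_D$ and $h_D$ is supported in $W_D$, \emph{every} homeomorphism supported in $W_D$ satisfies $(h_D,\id)\prec\U^*_D$; and you cannot delete $W_D$ from the cover, because any $(\{D\}\cup\mbox{singletons})$-saturated cover of $W_D$ must contain a member containing all of $D$, while the sets $V''\in\V''$ missing $D$ need not cover the rest of $W_D$. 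Second, the required property of $W_D$ (every $D'\subseteq W_D$, $D'\ne D$, lies in some $V''\in\V''$ disjoint from $D$) cannot in general be arranged by shrinking $W_D$: elements accumulate at $D$, and all members of $\V''$ containing such an element may meet $D$ (e.g.\ $X=\IR$, $D=[0,1]$, $\V''\supset\{(-1,1.5),(0.5,2)\}$ with no smaller member near $1$, and $D'\subset(1.001,1.002)$). More fundamentally, the Shrinkability Criterion applied to the singleton-plus-$D$ decomposition constrains only the image of $D$ (and trivially of singletons), never the images of other prescribed compacta, so no choice of covers for that decomposition can yield what case (ii) needs. This is precisely why the paper interleaves the two constructions: at stage $n$ it fixes, once and for all, a shrinking homeomorphism $h_{n,D}$ supported in $U_n(D)$ with $\diam(h_{n,D}(D))<2^{-n}$, and then chooses every later element inside a ball so small that its image under each earlier $h_{k,D}$ has diameter $<2^{-n}$ (its condition (6)); in the verification of strong shrinkability one then uses these pre-fabricated homeomorphisms, and every element caught in $U_{n_D}(D)$ automatically has a small image. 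Without this interleaving, or some comparable device, your argument cannot be completed.
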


We shall say that a topological space $X$ is {\em strongly locally homogeneous} if the family of singletons $\big\{\{x\}\big\}_{x\in X}$ is tame. This happens if and only if this family has the local shift property. So, our definition of the strong local homogeneity agrees with the classical one introduced in \cite{Bennett}. It is easy to see that each connected strongly locally homogeneous space is {\em topologically homogeneous} in the sense that for any two points $x,y\in X$ there is a homeomorphism $h:X\to X$ with $h(x)=y$.

The main technical result of this paper in the following theorem on the density of liftable homeomorphisms between decomposition spaces.

\begin{theorem}\label{main2}  For any tame family $\K$ of compact subsets of a strongly locally homogeneous completely metrizable space $X$ and any dense $\K$-tame decompositions $\A,\mathcal B$ of $X$, the set of $(\A,\BB)$-liftable homeomorphisms is dense in the homeomorphism space $\HH(\A,\BB)$.
\end{theorem}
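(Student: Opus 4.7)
The plan is to build, for any $f\in\HH(\A,\BB)$ and any open cover $\U$ of $X/\BB$, a homeomorphism $\Phi\colon X\to X$ that is $(\A,\BB)$-factorizable and whose induced lift $\varphi\colon X/\A\to X/\BB$ (the unique $\varphi$ with $q_\BB\circ\Phi=\varphi\circ q_\A$) lies in the neighborhood $N(f,\U)$. By the Shrinkability Criterion (Theorem~\ref{t:shrink}), both $q_\A$ and $q_\BB$ are strong near homeomorphisms, so I first pick an open cover $\V$ of $X$ so fine that any homeomorphism $\Phi$ with $(\Phi,F_0)\prec\V$ will induce a lift in $N(f,\U)$, and then approximate $q_\A,q_\BB$ by homeomorphisms $h_\A\colon X\to X/\A$ and $h_\BB\colon X\to X/\BB$ close enough that the composite $F_0=h_\BB^{-1}\circ f\circ h_\A\colon X\to X$ serves as a benchmark homeomorphism. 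The task reduces to deforming $F_0$ into a homeomorphism $\Phi$ satisfying $\Phi(\A)=\BB$ while keeping $(\Phi,F_0)\prec\V$.

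The deformation is a back-and-forth stratified by diameter. For each $n\in\IN$ let $\A_n=\{A\in\A^\circ:\diam(A)\ge 2^{-n}\}$ and $\BB_n$ analogously; the vanishing property makes each family discrete and closed in $X$, and collectionwise normality separates their elements by pairwise disjoint open tubes. I construct homeomorphisms $F_0,F_1,\ldots\colon X\to X$ in countably many stages, alternating $\A$- and $\BB$-steps, handling all elements at a given scale in parallel using the disjoint-tube structure. For a single $\A$-step on an unmatched $A\in\A_n$: ambient invariance of $\K$ gives $F_{k-1}(A)\in\K$; density of $\BB^\circ\subset\K$ supplies a $B\in\BB^\circ$ close to $F_{k-1}(A)$; the local shift property produces a small open tube $O$ containing $F_{k-1}(A)\cup B$ (disjoint from previously matched images and from other currently scheduled elements) together with a homeomorphism $h$ supported in $O$ with $h(F_{k-1}(A))=B$; set $F_k=h\circ F_{k-1}$. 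The $\BB$-step is symmetric, using $F_{k-1}^{-1}$ and density of $\A^\circ$.

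Scheduling the tube diameters so they are summably small in a fixed complete metric on $X$, both $(F_k)$ and $(F_k^{-1})$ become uniform Cauchy sequences that converge to a homeomorphism $\Phi\colon X\to X$. The back-and-forth guarantees that every element of $\A^\circ$ and $\BB^\circ$ is eventually matched, so $\Phi$ induces a bijection $\A^\circ\to\BB^\circ$ with $\Phi(\bigcup\A^\circ)=\bigcup\BB^\circ$; hence $\Phi$ bijects the singleton strata $X\setminus\bigcup\A^\circ$ onto $X\setminus\bigcup\BB^\circ$, giving $\Phi(\A)=\BB$ and making $\Phi$ $(\A,\BB)$-factorizable. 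The summability keeps $(\Phi,F_0)\prec\V$, and the initial choice of $\V$ forces the induced lift $\varphi$ into $N(f,\U)$.

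The principal difficulty is executing each local matching with support small enough for summability while still eventually matching every non-degenerate element and preserving earlier matchings. This is where the tame-family axioms interlock: ambient invariance keeps $F_k(A)$ inside $\K$ so iteration is possible, density of $\A^\circ,\BB^\circ$ provides nearby candidate targets at every scale, the local shift property supplies localized matching homeomorphisms, and vanishing together with collectionwise normality reduces the potentially uncountable family $\A^\circ\cup\BB^\circ$ to countably many discrete scales handled in parallel. Strong local homogeneity of $X$ contributes the ambient flexibility required when approximating the quotient maps $q_\A,q_\BB$ by homeomorphisms, ensuring that the benchmark $F_0$ can be set up at all.
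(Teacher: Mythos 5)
There is a genuine gap, and it sits at the heart of your construction: the single matching step. You invoke the local shift property as if it said ``any two sets of $\K$ that are close to each other and of comparable small size can be interchanged by a homeomorphism supported in a small tube containing both.'' But Definition~\ref{d:K-tame} gives much less: for a point $x$ and a prescribed $O_x$ it produces \emph{some} neighborhood $U_x\subset O_x$, of uncontrolled (possibly arbitrarily small) size, and only guarantees a matching homeomorphism for pairs $A,B\in\K$ that both happen to lie inside that $U_x$. In your scheme the sets to be matched are $F_{k-1}(A)$ (with $A\in\A^\circ$ nondegenerate) and an actual element $B\in\BB^\circ$; both have a fixed positive diameter, and nothing in the axioms guarantees that any neighborhood $U_x$ furnished by the local shift property contains them. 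So the step ``the local shift property produces a small open tube $O$ containing $F_{k-1}(A)\cup B$ together with $h$ supported in $O$ with $h(F_{k-1}(A))=B$'' is not justified, and with it the whole back-and-forth collapses: you cannot perform even one matching in general. (Local shrinkability lets you shrink $F_{k-1}(A)$, but it does not let you shrink $B$, since $B$ must remain the exact target; to use it you would need a shrink--transport--expand argument that your proposal does not contain.)

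This is precisely the difficulty the paper's proof is organized around, and why it does not stay inside $\HH(X)$ at finite stages. In Theorem~\ref{t4} the sets to be matched are first made small by approximating the quotient maps $q_\A,q_\BB$ by homeomorphisms $\alpha:X\to\A$, $\beta:X\to\BB$ (available by strong shrinkability and Theorem~\ref{t:shrink}), so that the compacta $\beta(b)$ and $f\circ\alpha(a)$ lie inside the prescribed neighborhood $U_b$ of the \emph{point} $b$ of the decomposition space, where the local shift property genuinely applies; the matching is then transported back through $\beta^{-1}$. Theorem~\ref{t5} iterates this scale by scale, peeling off the discrete families of elements of diameter $\ge 2^{-n}$ via auxiliary decompositions $\A_n,\BB_n$, correcting with Theorem~\ref{t:dense}, and the objects produced at finite stages are the multivalued maps $\Phi_n=q_{\BB_n}^{-1}\circ h_n\circ q_{\A_n}$, which become a genuine $(\A,\BB)$-factorizable homeomorphism only in the Hausdorff-metric limit, because the diameters of the untreated elements tend to $0$. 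Your outer framing (benchmark $F_0$ from near-homeomorphism approximations of $q_\A,q_\BB$, diameter-stratified scheduling, summable displacements, collectionwise normality for disjoint tubes) matches the paper's bookkeeping, but without the shrink-into-$U_x$ device—equivalently, without working through the quotient spaces—the core matching step fails, and that device is essentially the entire technical content of Sections~5--6 of the paper.
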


The proof of this theorem will be presented in Section~\ref{s:main2} after long preparatory work in Sections~\ref{s:dense}--\ref{s:dense-Ktame}. Now we apply this theorem to prove the following corollary.

\begin{corollary}\label{c2.6} For any tame family $\K$ of compact subsets of a strongly locally homogeneous completely metrizable space $X$, any two dense $\K$-tame decompositions $\A,\mathcal B$ of $X$ are topologically equivalent. Moreover, for any open cover $\U$ of $X$ there is a homeomorphism $\Phi:X\to X$ such that $\Phi(\A)=\mathcal B$ and $(\Phi,\id_X)\prec\W$, where $$\W=\{\St(A,\U)\cup\St(B,\U):A\in\A,\;B\in\BB,\;\;\St(A,\U)\cap\St(\BB,\U)\ne\emptyset\}.$$
\end{corollary}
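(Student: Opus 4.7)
My plan is to derive the corollary from Theorem~\ref{main2} combined with the Shrinkability Criterion (Theorem~\ref{t:shrink}). Since both $\K$-tame decompositions $\A$ and $\BB$ are strongly shrinkable, the quotient maps $q_\A:X\to X/\A$ and $q_\BB:X\to X/\BB$ are strong near homeomorphisms, so $X/\A$ and $X/\BB$ are each homeomorphic to $X$ and hence to one another. In particular the homeomorphism space $\HH(\A,\BB)$ is non-empty, so Theorem~\ref{main2} produces an $(\A,\BB)$-liftable homeomorphism $\varphi$, whose lift $\Phi:X\to X$ satisfies $\Phi(\A)=\BB$; this already establishes the topological equivalence asserted in the first half of the statement.

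For the quantitative ``moreover'' part the plan is to choose all approximations in terms of saturated refinements of $\U$. Since $\A$ is vanishing, Lemma~\ref{l2} makes it upper semicontinuous, so for every $A\in\A$ the closed $\A$-saturation $\St(X\setminus\St(A,\U),\A)$ is disjoint from $A$; hence $V_A:=X\setminus\St(X\setminus\St(A,\U),\A)$ is an $\A$-saturated open neighborhood of $A$ contained in $\St(A,\U)$. The family $\U_\A:=\{q_\A(V_A):A\in\A\}$ is then an open cover of $X/\A$, and analogously I form an open cover $\U_\BB$ of $X/\BB$ from $\BB$-saturated neighborhoods $V_B\subset\St(B,\U)$. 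Invoking the near-homeomorphism property of $q_\A$ and $q_\BB$, I pick homeomorphisms $\alpha:X\to X/\A$ with $(\alpha,q_\A)\prec\U_\A$ and $\beta:X\to X/\BB$ with $(\beta,q_\BB)\prec\U_\BB$, set $\psi:=\beta\circ\alpha^{-1}\in\HH(\A,\BB)$, and apply Theorem~\ref{main2} to obtain an $(\A,\BB)$-liftable homeomorphism $\varphi$ with $(\varphi,\psi)\prec\U_\BB$. Let $\Phi:X\to X$ be its lift, so that $q_\BB\circ\Phi=\varphi\circ q_\A$ and $\Phi(\A)=\BB$.

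The remaining step is to verify that $(\Phi,\id_X)\prec\W$. Fix $x\in X$ with $\Phi(x)\ne x$, and denote by $A$ the $\A$-class of $x$ and by $B$ the $\BB$-class of $\Phi(x)$. The inclusions $x\in A\subset\St(A,\U)$ and $\Phi(x)\in B\subset\St(B,\U)$ already place $\{x,\Phi(x)\}$ inside $\St(A,\U)\cup\St(B,\U)$, so it remains only to confirm $\St(A,\U)\cap\St(B,\U)\ne\emptyset$. The auxiliary point $y:=\alpha^{-1}(q_\A(x))$ is designed to bridge the two sides: unpacking $(\alpha,q_\A)\prec\U_\A$ places $x$ (hence $A$) and $y$ in a common saturated neighborhood $V_{A'}\subset\St(A',\U)$, and unpacking $(\varphi,\psi)\prec\U_\BB$ together with $(\beta,q_\BB)\prec\U_\BB$ confines $\Phi(x)$, the $\BB$-class of $\beta(y)$, and the $\BB$-class of $y$ to two further sets $V_{B'}\subset\St(B',\U)$ and $V_{B''}\subset\St(B'',\U)$. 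Because $y$ lies simultaneously in its $\A$- and its $\BB$-class, these pieces splice together into a short $\U$-star chain from $A$ to $B$.

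The step I expect to be the main obstacle is to collapse that chain of several $\U$-stars into the single intersection $\St(A,\U)\cap\St(B,\U)$ demanded by $\W$. The standard remedy is to run the entire construction not with $\U$ itself but with a sufficiently fine iterated star refinement $\U^{*}\prec\U$, which the paracompactness of the metrizable space $X$ makes available; after this substitution the chain of $\U^{*}$-stars collapses inside a single $\U$-star, producing the required common point of $\St(A,\U)$ and $\St(B,\U)$ and completing the proof.
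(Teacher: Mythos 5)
Your first half (equivalence via Theorem~\ref{main2} and Theorem~\ref{t:shrink}) and your construction of the saturated neighborhoods $V_A\subset\St(A,\U)$ mirror the paper's argument and are fine. The genuine gap is in the verification of $(\Phi,\id_X)\prec\W$: you insist that the witnessing pair be $A=$ the $\A$-class of $x$ and $B=$ the $\BB$-class of $\Phi(x)$, and then you must prove $\St(A,\U)\cap\St(B,\U)\ne\emptyset$ for exactly these two classes. The chain you assemble passes through stars $\St(A',\U)$, $\St(B',\U)$, $\St(B'',\U)$ of \emph{other} decomposition elements, and your proposed remedy --- rerunning everything with an iterated star refinement $\U^{*}\prec\U$ --- cannot collapse this chain: each link contains a full element $A'$ or $B'$ of the given decompositions, whose diameters are fixed data and are not shrunk by refining the cover, so the chain does not fit into a single $\U$-star. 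Worse, the statement you are trying to force is stronger than what the construction (or the corollary) provides: for instance, a point $x$ whose $\A$-class is a singleton lying next to a large element $A'\in\A^\circ$ may be carried by $\Phi$ across the large element $\Phi(A')\in\BB^\circ$, so $\St(x,\U)$ and $\St(\Phi(x),\U)$ (the stars of the two singleton classes) need not meet at all, however fine the approximations are. The definition of $\W$ deliberately allows \emph{arbitrary} $A\in\A$, $B\in\BB$ precisely to avoid this.

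The paper's proof handles this by choosing the witnesses from the approximation data rather than from the classes of $x$ and $\Phi(x)$. It star-refines inside the decomposition spaces, taking covers $\U_\A$, $\U_\BB$ with $\St(\U_\A)\prec\{O(A):A\in\A\}$ and $\St(\U_\BB)\prec\{O(B):B\in\BB\}$, where $q_\A^{-1}(O(A))\subset\St(A,\U)$ and $q_\BB^{-1}(O(B))\subset\St(B,\U)$. Then, with $y=h_\A^{-1}(q_\A(x))$, the closeness $(h_\A,q_\A)\prec\U_\A$ yields one element $A\in\A$ with $\{x,y\}\subset\St(A,\U)$, while the two $\BB$-side errors $(h_\BB,q_\BB)\prec\U_\BB$ and $(\varphi,h_\BB\circ h_\A^{-1})\prec\U_\BB$ are merged by the star refinement into a single $O(B)$, giving $\{\Phi(x),y\}\subset\St(B,\U)$ for one element $B\in\BB$. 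The auxiliary point $y$ then lies in $\St(A,\U)\cap\St(B,\U)$, certifying membership of $\St(A,\U)\cup\St(B,\U)$ in $\W$, and $\{x,\Phi(x)\}\subset\St(A,\U)\cup\St(B,\U)$. So the correct repair of your argument is not to refine $\U$ on $X$, but to star-refine the covers of the decomposition spaces and to let $A$ and $B$ be these ``anchor'' elements produced by the approximations, keeping the chain at length two with $y$ as the common point.
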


\begin{proof} Fix an open cover $\U$ of $X$. For every set $A\in\A$ consider its open neighborhood $\St(A,\U)=\{U\in\U:A\cap U\ne\emptyset\}$. Since the quotient map $q_\A:X\to\A=X/\A$ is closed, the set $O(A)=\A\setminus q_\A\big(X\setminus\St(A,\U)\big)$ is an open neighborhood of the point $A=q_\A(A)\in\A$ in the decomposition space $\A=X/\A$. By Lemma~\ref{l1}, the decomposition space $\A=X/\A$ is metrizable and hence paracompact. Consequently, we can find an open cover $\U_\A$ of $\A$ such that $\St(\U_\A)\prec\{O(A):A\in\A\}$. By analogy, choose an open cover $\U_\BB$ of the decomposition space $\BB$ such that $\St(\U_\BB)\prec\{O(B):B\in\BB\}$ where $O(B)=\BB\setminus q_\BB\big(X\setminus\St(B,\U)\big)$ for each $B\in\BB$.

By Definition~\ref{d:d-Ktame} and Theorem~\ref{t:shrink}, the quotient maps  $q_\A:X\to\A$ and $q_\BB:X\to\BB$ are near homeomorphism. Consequently, we can find homeomorphisms $h_\A:X\to\A$ and $h_\BB:X\to\BB$ such that $(h_\A,q_\A)\prec\U_\A$ and $(h_\BB,q_\BB)\prec\U_\BB$. Applying Theorem~\ref{main2}, find an $(\A,\BB)$-liftable homeomorphism $\varphi:\A\to\BB$ such that $(\varphi,h_B\circ h_A^{-1})\prec\U_B$. The $(\A,\BB)$-liftability of $\varphi$ yields a homeomorphism $\Phi:X\to X$ such that $q_\BB\circ\Phi=\varphi\circ q_\A$. The latter equality implies that $$\big\{\Phi(A):A\in\A\big\}=\big\{q_\BB^{-1}\circ\varphi\circ q_\A(A):A\in\A\big\}=\big\{q_\BB^{-1}\circ \varphi(\{A\}):A\in\A\big\}=\big\{q_\BB^{-1}(\{B\}):B\in\BB\big\}=\BB.$$

To show that $(\Phi,\id_X)\prec\W$, take any point $x\in X$ and consider the point $y=h_\A^{-1}\circ q_\A(x)\in X$. Since $(h_\A,q_\A)\prec\U_A$, there are a set $U\in\U_A$ and a set $A\in\A$ such that $\{q_\A(x),q_\A(y)\}=\{h_\A(y),q_\A(y))\subset U\subset O(A)$. Then $\{x,y\}\subset q^{-1}_\A(O(A)) \subset\St(A,\U)$.

The choice of the homeomorphism $h_B:X\to\BB$ guarantees that $\{h_\BB(y),q_\BB(y)\}\subset U$ for some set $U\in\U_\BB$. Since $(\varphi,h_\BB\circ h^{-1}_\A)\prec\U_\BB$, we conclude that $\{\varphi\circ q_\A(x),h_\BB\circ h^{-1}_\A\circ q_\A(x)\}\subset U'$ for some set $U'\in\U_\BB$. Then $h_\BB(y)=h_\BB\circ h_\A^{-1}\circ q_\A(x)\in U\cap U'$ and hence $\{\varphi\circ q_\A(x),q_\BB(y)\}\subset U\cup U'\subset O(B)$ for some set $B\in\BB$. The definition of the set $O(B)$ implies that
$$\{\Phi(x),y\}\subset q_\BB^{-1}\circ\varphi\circ q_\A(x)\cup q_\BB^{-1}\circ q_\BB(y)\subset q^{-1}_\BB(O(B))\subset \St(B,\U).$$

Consequently, $y\in\St(A,\U)\cap\St(B,\U)$ and $\{x,\Phi(x)\}\subset\St(A,\U)\cup\St(B,\U)\in \W$.
\end{proof}

\section{Approximating strong near homeomorphisms by homeomorphisms}

In this section we prove an auxiliary result on the approximation of strong near homeomorphisms by homeomorphisms. This result will be used in the proof of Theorem~\ref{t5}.

\begin{lemma}\label{l:approx} Let $\DD$ be a vanishing decompositions of a metrizable space $X$, $U\subset \DD$ be an open neighborhood of the non-degeneracy part $\DD^\circ$ in the decomposition space $\DD=X/\DD$, and $V=q_\DD^{-1}(U)\subset X$. Then there is an open cover $\U$ of $U$ such that for any homeomorphism $h:V\to U$ with $(h,q_\DD|V)\prec\U$ the map $\bar h:X\to\DD$ defined
$$\bar h(x)=\begin{cases}
h(x)&\mbox{if $x\in V$}\\
\{x\}&\mbox{otherwise}
\end{cases}
$$
is a homeomorphism of $X$ onto the decomposition space $\DD=X/\DD$.
\end{lemma}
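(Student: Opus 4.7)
The plan is to put a metric on the decomposition space $\DD$ and take $\U$ to consist of balls whose radii shrink to zero near the boundary $\DD\setminus U$; this will force any $h$ which is $\U$-close to $q_\DD|V$ to extend continuously across $X\setminus V$. First I would observe that, since $X$ is metrizable and $\DD$ is vanishing, Lemma~\ref{l2} gives upper semicontinuity of $\DD$, so $q_\DD:X\to\DD$ is perfect, and Lemma~\ref{l1} lets me fix a compatible metric $d$ on $\DD=X/\DD$. The hypothesis $\DD^\circ\subset U$ means every non-singleton class of $\DD$ lies in $V$; consequently $\bar h|(X\setminus V)=q_\DD|(X\setminus V)$ is a bijection from $X\setminus V$ onto $\DD\setminus U$, while $\bar h|V=h$ is a bijection $V\to U$, so $\bar h$ is a bijection regardless of the choice of $\U$.

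For the cover I would set $r(D)=\tfrac14\min\{1,d(D,\DD\setminus U)\}$ for each $D\in U$ and take $\U=\{O_d(D,r(D)):D\in U\}$. The key estimate: if $h(y)\neq q_\DD(y)$ for some $y\in V$, then $(h,q_\DD|V)\prec\U$ produces a class $D'\in U$ with $\{h(y),q_\DD(y)\}\subset O_d(D',r(D'))$, and a short triangle-inequality computation using $r(D')\le d(D',\DD\setminus U)/4$ yields
$$d\bigl(D',\DD\setminus U\bigr)\le \tfrac{4}{3}\,d\bigl(q_\DD(y),\DD\setminus U\bigr),$$
so both $r(D')$ and $d(h(y),q_\DD(y))<2r(D')$ tend to $0$ whenever $q_\DD(y)$ approaches $\DD\setminus U$.

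Continuity of $\bar h$ is then quickly settled: on the open set $V$ it equals $h$, and for a sequence $y_n\to x\in X\setminus V$, continuity of $q_\DD$ gives $q_\DD(y_n)\to\{x\}$, whence $d(q_\DD(y_n),\DD\setminus U)\to 0$ and the estimate above forces $\bar h(y_n)\to\{x\}=\bar h(x)$, whether or not $y_n\in V$. The delicate step is continuity of $\bar h^{-1}$ at a point $D=\{x\}\in\DD\setminus U$, where perfectness of $q_\DD$ does the work: given $D_n\to D$ in $\DD$ and $y_n=\bar h^{-1}(D_n)$, I get $q_\DD(y_n)\to D$ (trivially if $D_n\notin U$; by the same estimate if $D_n\in U$); then $\{y_n\}$ lies in the compact preimage $q_\DD^{-1}\bigl(\{q_\DD(y_n):n\in\IN\}\cup\{D\}\bigr)$, and any cluster point $y$ of $\{y_n\}$ must satisfy $q_\DD(y)=D$, forcing $y\in q_\DD^{-1}(D)=\{x\}$; the subsequence criterion in the metrizable space $X$ then yields $y_n\to x$.

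I expect this last step to be the main subtlety, since it is the only place where the vanishing hypothesis is genuinely used (to obtain upper semicontinuity via Lemma~\ref{l2} and hence a perfect $q_\DD$), and it is the reason the calibrated radius $r(D)=\tfrac14 d(D,\DD\setminus U)$ is required in place of a uniformly small one.
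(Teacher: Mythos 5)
Your proof is correct, but it follows a genuinely different route from the paper's. You do all the work downstairs: using Lemma~\ref{l2} to get upper semicontinuity (hence a perfect, closed $q_\DD$) and Lemma~\ref{l1} to fix a metric $d$ on the decomposition space, you take $\U$ to be balls $O_d(D,r(D))$ with radius calibrated by $d(D,\DD\setminus U)$, and the single triangle-inequality estimate then gives continuity of $\bar h$ at points of $X\setminus V$, while continuity of $\bar h^{-1}$ at points of $\DD\setminus U$ is obtained by pulling a convergent sequence back through the perfect map $q_\DD$ into a compact preimage and identifying the unique cluster point. The paper instead works upstairs in $X$: it fixes a metric on $X$, chooses a cover $\V$ of $V$ with $\St(\V)\prec\{O_d(v,d(v,X\setminus V)/2):v\in V\}$, proves Claim~\ref{cl:ed} (which uses the vanishing property directly, via discreteness of a suitable subfamily of $\DD$, to show that stars $\St(D,\V)$ meeting a small ball around a boundary point stay in a prescribed ball), takes $\U=\{O(D):D\in\DD_V\}$ with $O(D)=U\setminus q_\DD(X\setminus\St(D,\V))$ so that $q_\DD^{-1}(O(D))\subset\St(D,\V)$, and then verifies continuity and openness of $\bar h$ at boundary points by $\epsilon$--$\delta$ arguments inside $X$. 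Your version uses the vanishing hypothesis only through Lemma~\ref{l2} and is shorter and more conceptual; the paper's version never metrizes the quotient and keeps explicit control in $X$ itself. Two small points to tighten: for the inverse-continuity step you actually need the symmetric estimate $d(h(y),q_\DD(y))\le\tfrac23\,d(h(y),\DD\setminus U)$ (with $h(y)$, not $q_\DD(y)$, measuring distance to $\DD\setminus U$), which indeed follows by the identical computation since both points lie in the same ball $O_d(D',r(D'))$ -- say so explicitly; and the degenerate case $\DD\setminus U=\emptyset$ should be noted (there $\bar h=h$ and the lemma is trivial, and your $\min\{1,\cdot\}$ convention covers the radii).
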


\begin{proof} Fix a metric $d$ generating the topology of the space $X$ and let $\V$ be an open cover of the set $V=q_\DD^{-1}(U)$ such that $\St(\V)\prec\{O_d(v,d(v,X\setminus V)/2):v\in V\}$.

\begin{claim}\label{cl:ed} For each point $x_0\in X\setminus V$, and each $\epsilon>0$ there is a positive $\delta\le\epsilon$ such that for each $D\in\DD$, if $x\notin D$ and $\St(D,\V)\cap O_d(x_0,\delta)\ne\emptyset$, then  $\St(D,\V)\subset O_d(x_0,\epsilon)$.
\end{claim}

\begin{proof} Consider the open cover $\{O_d(x_0,\epsilon/2),X\setminus \bar O_d(x_0,\epsilon/4)\}$ of the space $X$. Since the decomposition $\DD$ is vanishing, the family $\DD'=\{D\in\DD:x_0\notin D,\;D\not\subset O_d(x_0,\epsilon/2),\;D\not\subset X\setminus \bar O_d(x_0,\epsilon/4)\}$
is discrete in $X$ and hence has closed union $\bigcup\DD'$, which does not contain the point $x_0$. Then we can find a positive $\delta<\epsilon/6$ such that $O_d(x_0,3\delta/2)\cap\bigcup\DD'=\emptyset$. Assume now  that $\St(D,\V)\cap O_d(x_0,\delta)\ne\emptyset$ for some set $D\in\DD$ with $x_0\notin D$. Pick any point $x\in \St(D,\V)\cap O_d(x_0,\delta)$ and find a point $z\in D\cap\St(x,\V)\subset O_d(x,d(x,X\setminus V)/2)$. Since $$d(z,x_0)\le d(z,x)+d(x,x_0)<\frac12d(x,X\setminus V)+d(x,x_0)\le \frac32d(x,x_0)<\frac32\delta<\frac14\epsilon,$$
the set $D$ meets the ball $O_d(x_0,3\delta/2)$ and hence does not belong to the family $\DD'$. Taking into account that the set $D\notin\DD'$ meets the ball $O_d(x_0,\epsilon/4)$, we conclude that $D\subset O_d(x_0,\epsilon/2)$. Given any point $y\in\St(D,\U)$, observe that $\emptyset\ne D\cap\St(y,\V)\subset O_d(x_0,\epsilon/2)\cap O_d(y,O_d(y,X\setminus V)/2)$ and hence $$d(x_0,y)< \frac12\epsilon+\frac12d(y,X\setminus V)\le\frac12\epsilon+\frac12d(y,x_0),$$which implies that $d(y,x_0)<\epsilon$ and $\St(D,\V)\subset O_d(x_0,\epsilon)$.
\end{proof}

The decomposition $\DD$ induces the decomposition $\DD_V=\{D\in\DD: D\subset V\}$ of the space $V$.
By Lemma~\ref{l2}, the vanishing decomposition $\DD$ is upper semicontinuous and hence the quotient map $q_\DD:X\to \DD$ is closed. Consequently, for every set $D\in\DD_V\subset\DD$ the set $F=q_\DD(X\setminus\St(D,\V))$ is closed in $\DD$ and the set $O(D)=U\setminus F$ is an open neighborhood of the point $D\in\DD$ in the decomposition space $\BB$. Since $\bigcup\DD_V=V$, the family $\U=\{O(D):D\in\DD_V\}$ is an open cover of the open subspace $U=q_\DD(V)$ of the decomposition space $\DD=X/\DD$. We claim that the open cover $\U$ has the property required in Lemma~\ref{l:approx}.

Let $h:V\to U$ be a homeomorphism with $(h,q_\DD|V)\prec\U$ and $\bar h:X\to\DD$ be the extension of $h$ such that $\bar h(x)=\{x\}$ for all $x\in X\setminus V$. It is clear that the map $\bar h$ is bijective. Since $\bar h|V=h|V$, the map $\bar h$ is open and continuous at each point $x_0\in V$. So, it remains to prove the continuity and the openness of $\bar h$ at each point $x_0\in X\setminus V$.

To prove the continuity of $\bar h$ at $x_0$, take any neighborhood $O(\{x_0\})\subset\DD$ of the image $h(x_0)=\{x_0\}$ of $x_0$ in the decomposition space $\DD$. By the continuity of the quotient map $q_\DD$ the preimage $O(x_0)=q_\DD^{-1}\big(O(\{x_0\})\big)$ of this neighborhood is a $\DD$-saturated open neighborhood of the point $x_0$ in $X$. Find a positive $\epsilon$ such that $O_d(x_0,\epsilon)\subset O(x_0)$. By Claim~\ref{cl:ed}, there a positive number $\delta\le\e$ such that for each set $D\in\DD_V$ with $O_d(x_0,\delta)\cap\St(D,\V)\ne\emptyset$, we get $\St(D,\V)\subset O_d(x_0,\epsilon)$.

We claim that $\bar h(O_d(x_0,\delta))\subset O(\{x_0\})$. Pick any point $x\in O_d(x_0,\delta)$. If $x\notin V$, then $x\in O_d(x_0,\delta)\subset O_d(x_0,\epsilon)\subset O(x_0)=q_\BB^{-1}(O(\{x_0\})$ and hence $\bar h(x)=\{x\}=q_\BB(x)\in O(\{x_0\})$. So, we assume that $x\in V$. In this case $\bar h(x)=h(x)$ and $(h(x),q_\BB(x))\subset O(D)\in\U$ for some set $D\in \DD_V$. Then $\{x\}\cup q_\BB^{-1}(h(x))\subset q_\BB^{-1}(O(D))\subset\St(D,\V)$. Since $x\in \St(D,\V)\cap O_d(x_0,\delta)$, the choice of the number $\delta$ guarantees that $q_\BB^{-1}(h(x))\subset\St(x_0,\V)\subset O_d(x_0,\e)\subset O(x_0)$ and hence $h(x)\in q_\BB(O(x_0))=O(\{x_0\})$.
So, the map $\bar h:X\to\DD$ is continuous at $x_0$.

Next, we show that the map $\bar h$ is open at $x_0$. Given any $\epsilon>0$, we should find an open neighborhood $U(\{x_0\})\subset\DD$ of the point $\{x_0\}=\bar h(x_0)=q_\DD(x_0)$ such that $U(\{x_0\})\subset h(O_d(x_0,\epsilon))$. By Claim~\ref{cl:ed}, there exists a positive number $\delta\le \epsilon$ such that for each set $D\in\DD_V$ with $\St(D,\V)\cap O_d(x_0,\delta)\ne\emptyset$, we get $\St(D,\V)\subset O_d(x_0,\epsilon)$. Since the decomposition $\DD$ is upper semicontinuous, for the closed subset $C=X\setminus O_d(x_0,\delta)$ of $X$ its $\DD$-saturation $\St(C,\DD)$ is  closed in $X$. Then $U(x_0)=X\setminus \St(C,\DD)\subset O_d(x_0,\delta)$ is a $\DD$-saturated open neighborhood of $x_0$ in $X$ and its image $U(\{x_0\})=q_\DD(U(x_0))$ is an open neighborhood of the point $\{x_0\}$ in the decomposition space $\DD$.  We claim that $U(\{x_0\})\subset \bar h(O_d(x_0,\e))$. Take any point $y\in U(\{x_0\})$ and consider its preimage $x=\bar h^{-1}(y)\in X$. If $x\notin V$, then $y=\bar h(x)=q_\DD(x)=\{x\}$ and hence $x\in q_\DD^{-1}(y)\subset q_\DD^{-1}\big(U(\{x_0\})\big)=U(x_0)\subset O_d(x_0,\delta)\subset O_d(x_0,\epsilon)$.
So, we assume that $y\in U$. In this case $y=\bar h(x)=h(x)$. Since $(h,q_\DD|V)\prec\U$, there is a set $D\in\DD_V$ such that $\{q_\DD(x),y\}=\{q_\DD(x),h(x)\}\subset O(D)\in\U$ and thus $q_\DD^{-1}(y)\subset q_\DD^{-1}(\{q_\DD(x),y\})\subset q_\DD^{-1}\big(O(D)\big)\subset \St(D,\V)$ by the choice of the neighborhood $O(D)$. Taking into account that $q_\DD^{-1}(y)\subset U(x_0)\subset O_d(x_0,\delta)$, we see that the $\V$-star $\St(D,\V)$ of $D$ meets the $\delta$-ball $O_d(x_0,\delta)$ and hence is contained in the $\e$-ball $O_d(x_0,\epsilon)$ by the choice of $\delta$. Then $x\in q_\DD^{-1}\big(q_\DD(x)\big)\subset \St(D,\V)\subset O_d(x_0,\epsilon)$ and $y=\bar h(x)\in \bar h\big(O_d(x_0,\epsilon)\big)$.
\end{proof}

\section{Topological equivalence of dense $\sigma$-discrete subsets of strongly locally homogeneous spaces}\label{s:dense}

In this section we establish one important property of strongly locally homogeneous completely metrizable spaces, which will be used several times in the proof of Theorem~\ref{main2} and \ref{t5}.

Let us recall that a topological space $X$ is called {\em strongly locally homogeneous} if for each point $x\in X$ and an open neighborhood $O_x\subset X$ of $x$ there is an open neighborhood $U_x\subset O_x$ of $x$ such that for any point $y\in U_x$ there is a homeomorphism $h:X\to X$ such that $h(x)=y$ and $h|X\setminus O_x=\id$.

A subset $D$ of a topological space $X$ is called {\em $\sigma$-discrete} if $D$ can be written as a countable union $D=\bigcup_{n\in\w}D_n$ of closed discrete subsets of $X$.

The following theorem generalizes a result of Bennett \cite{Bennett} on the topological equivalence of any  countable dense subsets in a strongly locally homogeneous Polish space.

\begin{theorem}\label{t:dense} If $X$ is a strongly locally homogeneous completely metrizable space, then for any open cover $\U$ of an open subspace $U\subset X$, and any dense $\sigma$-discrete subspaces $A,B\subset U$ there is a homeomorphism $h:X\to X$ such that $h(A)=B$ and $(h,\id)\prec\U$.
\end{theorem}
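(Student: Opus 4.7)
The plan is to construct $h$ as the uniform limit of a sequence of homeomorphisms $h_n\colon X\to X$ built by a back-and-forth recursion; this generalizes Bennett's argument from countable dense subsets to the $\sigma$-discrete setting, the one new ingredient being that collectionwise normality of a metrizable space allows an entire closed discrete family of matching tasks to be handled simultaneously at each stage. Fix a complete metric $d$ on $X$ and write $A=\bigcup_{n\in\w}A_n$ and $B=\bigcup_{n\in\w}B_n$ with each $A_n,B_n$ closed and discrete in $X$. After replacing $\U$ by a star-refinement of itself (using paracompactness of the metrizable space $U$) it suffices to produce $h$ with $(h,\id)\prec\U$. Inductively I would build homeomorphisms $\id=h_{-1},h_0,h_1,\dots$ of $X$ together with open covers $\V_n\prec\U$ of $U$ of $d$-mesh below $2^{-n}$ satisfying
\begin{enumerate}
\item $h_n|_{X\setminus U}=\id$;
\item $h_n(A_0\cup\cdots\cup A_n)\subset B$ and $h_n^{-1}(B_0\cup\cdots\cup B_n)\subset A$;
\item $h_n$ fixes pointwise the already matched set $P_{n-1}:=h_{n-1}(A_0\cup\cdots\cup A_{n-1})\cup(B_0\cup\cdots\cup B_{n-1})$;
\item $(h_n,h_{n-1})\prec\V_n$ and $d(h_n^{-1},h_{n-1}^{-1})<2^{-n}$.
\end{enumerate}
Clause (iv) will force both $(h_n)$ and $(h_n^{-1})$ to be uniformly $d$-Cauchy, so by completeness of $d$ the limit $h=\lim h_n$ exists and is a homeomorphism of $X$; clause (iii) keeps every match permanent, so (ii) passes to the limit to give $h(A)=B$; while (i) and the telescoping of (iv) yield $(h,\id)\prec\U$.

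The inductive step proceeds in two symmetric halves. To pass from $h_{n-1}$ to an intermediate $h_n'$ with $h_n'(A_n)\subset B$, I note that $h_{n-1}(A_n)\setminus B$ is closed and discrete in $X$, while the matched set $P_{n-1}$ is closed and $\sigma$-discrete. By collectionwise normality one obtains a discrete family $\{W_a\}_{a\in A_n\setminus h_{n-1}^{-1}(B)}$ of open neighborhoods of the points $h_{n-1}(a)$, with each $W_a$ contained in some member of $\V_n$ and disjoint from $P_{n-1}$. Density of $B$ supplies $b_a\in B\cap W_a$, and strong local homogeneity, applied inside a still smaller neighborhood of $h_{n-1}(a)$ whose closure lies in $W_a$, furnishes a homeomorphism $g_a\colon X\to X$ supported in $W_a$ with $g_a(h_{n-1}(a))=b_a$. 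Because the $W_a$ are pairwise disjoint, the $g_a$ glue into a single homeomorphism $g_n'$ of $X$, and $h_n':=g_n'\circ h_{n-1}$ inherits (i) and (iii) while achieving $h_n'(A_n)\subset B$. The symmetric construction applied to $(h_n')^{-1}$, with $B_n\setminus h_n'(A)$ in place of $A_n\setminus h_{n-1}^{-1}(B)$ and the roles of $A,B$ swapped, produces the sought $h_n$, now pinning the enlarged matched set $h_n'(A_0\cup\cdots\cup A_n)\cup(B_0\cup\cdots\cup B_{n-1})$ rather than just $P_{n-1}$.

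The main obstacle is the inverse-distance clause in (iv), because a $d$-Cauchy sequence of homeomorphisms need not converge to a homeomorphism. This is resolved by choosing the discrete family $\{W_a\}$ small enough in $d$-diameter to give $\mathrm{diam}(g_n')<2^{-n}$, and then by applying strong local homogeneity inside a further shrunk sub-neighborhood of $h_{n-1}(a)$ with closure in $W_a$ tight enough — by a routine uniform continuity argument — that the resulting $g_a^{-1}$ also has $d$-diameter below $2^{-n}$. The remaining bookkeeping, namely the survival of (iii) through the symmetric half of each step, the passage of (ii) to the limit, and the absorption of a factor of two by the initial star-refinement of $\U$, is standard.
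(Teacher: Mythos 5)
Your outline follows the paper's own route: a back-and-forth recursion in which, at each stage, the closed discrete set of unmatched points is treated simultaneously by homeomorphisms supported on a discrete family of small open sets (collectionwise normality), obtained from density plus strong local homogeneity, and the limit is taken of a sequence whose maps and inverses are both uniformly Cauchy. But the step you yourself single out as the main obstacle is not resolved by what you propose. Writing $h_n'=g_n'\circ h_{n-1}$ with $g_n'$ supported in the disjoint sets $W_a$, one has $(h_n')^{-1}=h_{n-1}^{-1}\circ (g_n')^{-1}$, so for $y\in W_a$ the displacement $d\big((h_n')^{-1}(y),h_{n-1}^{-1}(y)\big)$ is bounded only by $\diam\big(h_{n-1}^{-1}(W_a)\big)$, not by $\diam(W_a)$ nor by the displacement of $g_a^{-1}$; the latter is automatically $<2^{-n}$ once $\diam(W_a)<2^{-n}$ (since $g_a^{\pm1}$ preserve $W_a$ and fix its complement), so shrinking so that ``$g_a^{-1}$ has small diameter'' controls a quantity that needs no control and leaves the relevant one unbounded. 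What you must impose is $\diam\big(h_{n-1}^{-1}(W_a)\big)<2^{-n}$, achievable by shrinking each $W_a$ separately using continuity of $h_{n-1}^{-1}$ at the single point $h_{n-1}(a)$ (uniform continuity is neither available nor needed: the relevant points form a closed discrete set). This is exactly the paper's condition $\diam\big(h_{n-1}^{-1}(V_b)\big)<2^{-n-1}$, and the same correction is needed in the symmetric half of each step.

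A second step that fails as stated is the endgame $(h,\id)\prec\U$. One star-refinement of $\U$ together with $d$-mesh of $\V_n$ below $2^{-n}$ does not confine an infinite composition: the member of $\U$ witnessing the first move of a point $z$ (say at stage $n_0$) may be far smaller than the subsequent total drift $\sum_{k>n_0}2^{-k}$, so $\{z,h(z)\}$ need not lie in any member of $\U$. You need either the paper's device -- choose the complete metric by Tukey's technique so that the closed $1$-balls refine $\U$ and then bound the total displacement by $1$ (in your relative setting $U\subsetneq X$, replace the constant $1$ by a continuous $\e:U\to(0,1]$ with $O_d(x,\e(x))$ inside a member of $\U$, which also keeps the limit the identity near $\partial U$) -- or a full chain of star-refinements $\St(\V_{n+1})\prec\V_n$ with $\V_1$ star-refining $\U$. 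Two smaller points of the bookkeeping you defer should also be made explicit: clause (iii) should say that $h_n$ agrees with $h_{n-1}$ on $h_{n-1}^{-1}(P_{n-1})$ (not that $h_n$ fixes $P_{n-1}$ pointwise), and each $W_a$ must in addition avoid the remaining points of $h_{n-1}(A_n)$ and of $B_n$, since otherwise $g_a$ can move the image $h_{n-1}(a'')\in B$ of another point $a''\in A_n$ off $B$ and destroy clause (ii); the paper secures precisely these disjointness conditions before applying strong local homogeneity.
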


\begin{proof} Since the strong local homogeneity is inherited by open subspaces, we lose no generality assuming that $U=X$. Using a standard technique of Tukey (cf. \cite[5.4.H]{En}), we can choose a complete metric $d$ generating the topology of $X$ and such that the cover $\{O_d(x,1):x\in X\}$ of $X$ by closed 1-balls refines the cover $\U$.

Given dense $\sigma$-discrete subsets $A,B$ in $U=X$, choose a (not necessarily continuous) function $\delta:A\cup B\to (0,1]$ such that for each $\epsilon>0$ the set $\{x\in A\cup B:\delta(x)>\epsilon\}$ is closed and discrete in $X$.

We shall construct inductively a sequence of homeomorphisms $(h_n:X\to X)_{n\in\w}$ and two sequences $(A_n)_{n\in\w}$ and $(B_n)_{n\in\w}$ of closed discrete subsets of $X$ such that for every $n\in\w$ the following conditions will be satisfied:
\begin{enumerate}
\item $A_{n-1}\cup\{a\in A:\delta(a)\ge 2^{-n}\}\subset A_n\subset A$;
\item $B_{n-1}\cup\{b\in B:\delta(b)\ge 2^{-n}\}\subset B_n\subset B$;
\item $h_n(A_n\setminus A_{n-1})=B_n\setminus B_{n-1}$;
\item $h_n|A_{n-1}=h_{n-1}|A_{n-1}$, and
\item $d(h_n,h_{n-1})\le 2^{-n-1}$ and $d(h_n^{-1},h_{n-1}^{-1})\le 2^{-n-1}$.
\end{enumerate}

We start the inductive construction by letting $A_0=B_0=\emptyset$ and $h_0=\id_X$. Assume that for some $n\in\IN$ subsets $A_i,B_i$ and homeomorphisms $h_i$ have been constructed for all $i<n$.
The inductive assumptions (3) and (4) imply that $h_{n-1}(A_{n-1})=B_{n-1}$.

Consider the subsets $\tilde A_n=\{a\in A\setminus A_{n-1}:\delta(a)\ge 2^{-n}\}$ and $\tilde B_n=\{b\in B\setminus B_{n-1}:\delta(b)\ge 2^{-n}\}$. By the choice of the function $\delta$, these sets are closed and discrete in $X$. Then the sets $B_n'=h_{n-1}(\tilde A_n)\setminus \tilde B_n$ and $A_n'=h_{n-1}^{-1}(\tilde B_n)\setminus\tilde A_n$ also are closed and discrete in $X$. It follows that $h_{n-1}(A'_n)\cap B_n'=\emptyset$. By normality of the space $X$, the closed sets $A_n',B_n'$ have  open neighborhoods $O(A'_n),O(B'_n)\subset X$ such that $h_{n-1}(\bar O(A'_n))\cap \bar O(B'_n)=\emptyset$, where $\bar O(A_n')$ and $\bar O(B_n')$ are the closures of these neighborhoods in $X$.
Moreover, we can assume that $\bar O(A_n')\cap(A_{n-1}\cup \tilde A_n)=\emptyset$ and
$\bar O(B_n')\cap(B_{n-1}\cup \tilde B_n)=\emptyset$.

For each point $b\in B_n'$ choose a neighborhood $V_b\subset O(B_n')$ such that $\diam(V_b)<2^{-n-1}$ and $\diam(h^{-1}_{n-1}(V_b))<2^{-n-1}$. Since the set $B_n'$ is closed and discrete in the collectionwise normal space $X$, we can assume that the family $(V_b)_{b\in B_n'}$ is discrete in $X$. Since the space $X$ is strongly locally homogeneous, each point $b\in B_n'$ has a neighborhood $W_b\subset V_b$ such that for each point $b'\in W_b$ there is a homeomorphism $\beta_b:X\to X$ such that $\beta_b(b)=b'$ and $\beta_b|X\setminus V_b=\id$. Since the subset $B\subset X$ is dense, we can choose a point $b'\in B\cap W_b$ and find a homeomorphism such that $\beta_b(b)=b'$ and $\beta_b|X\setminus V_b=\id$. The homeomorphisms $\beta_b$, $b\in B_n'$, produce a single homeomorphism
$\beta:X\to X$ defined by the formula
$$
\beta(x)=\begin{cases}\beta_b(x)&\mbox{if $x\in V_b$ for some $b\in B_n'$,}\\
x&\mbox{otherwise.}
\end{cases}
$$
It is easy to see that the homeomorphism $\beta:X\to X$ has the following properties:
\begin{itemize}
\item $\beta(B_n')\subset B$,
\item $\beta|X\setminus O(B'_n)=\id$,
\item $d(\beta\circ h_{n-1},h_{n-1})\le 2^{-n-1}$, and
\item $d(h_{n-1}^{-1}\circ \beta^{-1},h_{n-1}^{-1})\le 2^{-n-1}$.
\end{itemize}
Let us prove the latter inequality. Given any point $x\in X$, we need to check that $d(h_{n-1}^{-1}\circ\beta^{-1}(x),h_{n-1}^{-1}(x))\le2^{-n-1}$. If $x\notin\bigcup_{b\in b_n'}V_b$, then $\beta(x)=x=\beta^{-1}(x)$ and hence $d(h_{n-1}^{-1}\circ\beta^{-1}(x),h_{n-1}^{-1}(x))=0\le 2^{-n-1}$.
So, we assume that $x\in V_b$ for some $b\in B'_n$. Then the point $y=\beta^{-1}(x)$ also belongs to $V_b$ and hence $$d\big(h_{n-1}^{-1}\circ \beta^{-1}(x),h_{n-1}^{-1}(x)\big)\le\diam \big(h_{n-1}^{-1}(V_b)\big)\le 2^{-n-1}$$by the choice of the neighborhood $V_b$.

By analogy we can construct a homeomorphism $\alpha:X\to X$ such that
\begin{itemize}
\item $\alpha(A_n')\subset A$,
\item $\alpha|X\setminus O(A'_n)=\id$,
\item $d(\alpha\circ h^{-1}_{n-1},h^{-1}_{n-1})\le 2^{-n-1}$, and
\item $d(h_{n-1}\circ \alpha^{-1},h_{h-1})\le 2^{-n-1}$.
\end{itemize}

Let $A_n=A_{n-1}\cup\tilde A_n\cup \alpha(A_n')$ and $B_n=B_{n-1}\cup\tilde B_n\cup \beta(B_n')$.
Now consider the homeomorphism $h_n:X\to X$ defined by the formula
$$h_n(x)=\begin{cases} \beta\circ h_{n-1}(x)&\mbox{if $x\in h_{n-1}^{-1}(O(B'_n))$},\\
h_{n-1}\circ\alpha^{-1}(x)&\mbox{if $x\in O(A_n')$}\\
h_{n-1}(x)&\mbox{otherwise}.
\end{cases}
$$
The choice of the neighborhoods $O(A_n')$ and $O(B'_n)$ guarantees that $h_n$ is a well-defined homeomorphism that satisfies the conditions (1)--(5) of the inductive construction. This completes the inductive step.

The condition (5) of the inductive construction imply that the limit map $h=\lim_{n\to\infty}h_n$ is a homeomorphism of $X$ such that $$d(h,\id)\le \sum_{n=1}^\infty d(h_n,h_{n-1})\le\sum_{n=1}2^{-n-1}=1$$and hence $(h,\id)\prec\U$ by the choice of the metric $d$.

The conditions (3) and (4) of the inductive construction imply that $h|A_n=h_n|A_n$ and $h_n(A_n)=B_n$ for all $n\in\w$. Taking into account that  $A=\bigcup_{n\in\w}A_n$ and
 $B=\bigcup_{n\in\w}B_n$, we conclude that $h(A)=B$.
\end{proof}

\section{Topological equivalence of discrete $\K$-tame decompositions}

In this section we shall prove a discrete version of Theorem~\ref{main2}. We recall that a decomposition $\DD$ of a topological space $X$ is called {\em discrete} if its non-degeneracy part $\DD^\circ=\{D\in\DD:|D|>1\}$ is closed and discrete in the decomposition space $\DD=X/\DD$.

The following fact easily follows from the definitions.

\begin{lemma}\label{l:dshrink} A discrete decomposition $\DD$ of a regular topological space is strongly shrinkable if and only if each set $D\in\DD$ is locally shrinkable in $X$.
\end{lemma}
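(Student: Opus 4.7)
The plan is to bootstrap both directions from a single consequence of discreteness. For every nondegenerate element $D\in\DD^\circ$ I would pick a $\DD$-saturated open neighborhood $U_D\subset X$ of $D$ whose only nondegenerate $\DD$-element is $D$ itself, obtained by pulling back via $q_\DD$ any neighborhood of the isolated point $D$ of the closed discrete set $\DD^\circ\subset X/\DD$ that meets $\DD^\circ$ only at $D$. This gives $\DD|U_D=\{D\}\cup\big\{\{x\}:x\in U_D\setminus D\big\}$, and a direct check shows that an open $W\subset U_D$ is $(\DD|U_D)$-saturated precisely when $W$ is $\DD$-saturated as a subset of $X$.

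For the necessity direction, assuming $\DD$ strongly shrinkable, singleton $D$'s are trivially locally shrinkable, so I fix $D\in\DD^\circ$ and use the $U_D$ above. The previous observation lets strong shrinkability of $\DD$ pass to strong shrinkability of $\DD|U_D$ as a decomposition of $U_D$; the equivalence recorded in the text just after the definition of local shrinkability then says that $D$ is locally shrinkable inside $U_D$. To promote this to local shrinkability in $X$, given an open neighborhood $O\subset X$ of $D$ and open cover $\V$ of $O$, I would use regularity of $X$ together with compactness of $D$ to choose $N$ open with $D\subset N\subset\bar N\subset O\cap U_D$, invoke local shrinkability in $U_D$ for $N$ and the cover $\{V\cap N:V\in\V\}$ to get $h_0:U_D\to U_D$ with $h_0|U_D\setminus N=\id$ and $h_0(D)\subset V$ for some $V\in\V$, and finally extend $h_0$ by the identity on $X\setminus U_D$. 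Continuity of the extension follows from $\bar N\subset U_D$: any net in $U_D$ converging to a point of $\partial U_D$ must eventually leave $\bar N$, where $h_0$ is already the identity.

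For the sufficiency direction, assuming every $D\in\DD$ locally shrinkable, I would fix a $\DD$-saturated open $U\subset X$, a $\DD$-saturated open cover $\U$ of $U$, and an open cover $\V$ of $U$. For each $D\in\DD^\circ$ with $D\subset U$ I would pick $U^*_D\in\U$ containing $D$ (possible since $\U$ is a $\DD$-saturated cover), refine to a $\DD$-saturated open neighborhood $V_D\subset U^*_D\cap U_D\cap U$ of $D$ with only $D$ as nondegenerate $\DD$-element, and then via regularity of $X$ and compactness of $D$ choose an open $O_D$ with $D\subset O_D\subset\bar O_D\subset V_D$. Local shrinkability of $D$ applied to $O_D$ and the cover $\{V\cap O_D:V\in\V\}$ yields a homeomorphism $h_D:X\to X$ with $h_D|X\setminus O_D=\id$ and $h_D(D)$ inside some element of $\V$. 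The witness $h:U\to U$ of shrinkability of $\DD|U$ is then defined piecewise, as $h_D$ on each $O_D$ and as the identity on $U\setminus\bigcup_D O_D$; the condition $(h,\id_U)\prec\U$ holds because each $h_D$ moves points only inside $V_D\subset U^*_D\in\U$, while $\{h(D'):D'\in\DD|U\}\prec\V$ holds because $h(D)\subset V$ for $D\in\DD^\circ$ and singletons are trivially refined by any cover element containing them.

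The main obstacle I anticipate is the well-definedness and continuity of the assembled $h$ in the sufficiency direction: this requires the family $\{\bar O_D\}_{D\in\DD^\circ\cap\DD|U}$ to be discrete in $U$. Discreteness of $\DD^\circ$ in $X/\DD$ together with sufficient normality of $X/\DD$ produces a discrete family of pairwise disjoint $\DD$-saturated open neighborhoods of the elements of $\DD^\circ$, inside which one can cut down to the required $\bar O_D$. In the completely metrizable settings where the paper applies this lemma such normality holds automatically via collectionwise normality of metric spaces; for a bare regular space one would need to invoke an extra normality hypothesis, and this is the only genuinely nontrivial set-theoretic step.
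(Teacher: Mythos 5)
The paper gives no proof of this lemma (it is dismissed as ``easily follows from the definitions''), so your write-up is essentially the argument the authors must have had in mind, and its architecture is sound. The necessity direction is complete: pulling back a neighborhood of the isolated point $D\in\DD^\circ$ of the quotient gives the saturated neighborhood $U_D$ with $\DD|U_D=\{D\}\cup\big\{\{x\}:x\in U_D\setminus D\big\}$; saturation relative to $U_D$ and to $X$ agree, so strong shrinkability of $\DD$ passes to $\DD|U_D$; the equivalence recorded after the definition of local shrinkability then gives local shrinkability of $D$ in $U_D$; and your promotion to $X$ (choose $N$ with $D\subset N\subset\bar N\subset O\cap U_D$ by regularity plus compactness, extend the shrinking homeomorphism by the identity, and check continuity on the open cover $\{U_D,\;X\setminus\bar N\}$ of $X$) is correct. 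In the sufficiency direction the definition of $h$ and the verifications of $(h,\id_U)\prec\U$ and $\{h(D'):D'\in\DD|U\}\prec\V$ are also fine.

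The obstacle you single out is indeed the only real one, and your assessment of it is accurate. For the pasted map to be a homeomorphism you need the supports $O_D$ to form a discrete family, i.e.\ you must expand the discrete family $\DD^\circ$ to a discrete family of open sets, and this is a collectionwise-normality statement that does not follow from regularity (there are normal spaces in which even a closed discrete set of points cannot be so separated). Two remarks: first, you need not invoke normality of $X/\DD$ at all -- pulling back, for each $x\in X$, a neighborhood of $q_\DD(x)$ meeting $\DD^\circ$ in at most the point $q_\DD(x)$ shows that $\DD^\circ$ is already a discrete family of compacta in $X$ itself, so collectionwise normality of $X$ suffices; second, in every place the paper applies the lemma the ambient space is (completely) metrizable, hence collectionwise normal, so your proof covers all the paper's uses. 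For the lemma in its literal generality (a bare regular space) the gap you flag is genuine, but it is arguably a defect of the stated hypotheses rather than of your argument, and the paper offers no hint of how it would be avoided.
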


For two decompositions $\A,\BB$ of a topological space $X$ we shall denote by $\HH^\circ(\A^\circ,\BB^\circ)$  the space of all  homeomorphisms $h:(\A,\A^\circ)\to(\BB,\BB^\circ)$ of the pairs $(\A,\A^\circ)$ and $(\BB,\BB^\circ)$, endowed with the strong limitation topology, whose neighborhood base at a homeomorphism $h\in\HH^\circ(\A^\circ,\BB^\circ)$ consists of the sets
$$N(h,\U)=\{g\in\HH^\circ(\A^\circ,\BB^\circ):(f,g)\prec\U\}$$where $\U$ runs over all covers of the non-degeneracy part $\BB^\circ$ by open subsets of the decomposition space $\BB$.

\begin{theorem}\label{t4} Let $\K$ be a tame family $\K$ of compact subsets of a strongly locally homogeneous completely metrizable space $X$. Then for any discrete decompositions $\A,\BB\subset\K\cup\big\{\{x\}:x\in X\big\}$ of $X$, the set of $(\A,\BB)$-liftable homeomorphisms is dense in the homeomorphism space $\HH^\circ(\A^\circ,\BB^\circ)$.
\end{theorem}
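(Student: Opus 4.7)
The aim is to construct, given $\varphi\in\HH^\circ(\A^\circ,\BB^\circ)$ and an open cover $\U$ of $\BB^\circ$ by open subsets of $X/\BB$, an $(\A,\BB)$-liftable homeomorphism $\psi\in N(\varphi,\U)$. I plan to produce $\psi$ as the map induced by a homeomorphism $\Phi\colon X\to X$ which sends each non-degenerate element $A\in\A^\circ$ onto $\varphi(A)\in\BB^\circ$ as subsets of $X$, and which coincides with the natural singleton lift $\varphi':=q_\BB^{-1}\circ\varphi\circ q_\A\colon X\setminus\bigcup\A^\circ\to X\setminus\bigcup\BB^\circ$ of $\varphi$ outside small buffer neighborhoods of the non-degenerate elements.

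First, using the closed discreteness of $\BB^\circ$ in the metrizable (Lemma~\ref{l1}), and hence paracompact, space $X/\BB$, I would refine $\U$ to a pairwise disjoint, locally finite family $\{V(B):B\in\BB^\circ\}$ of open subsets of $X/\BB$ with $B\in V(B)\subset U$ for some $U\in\U$. The $\BB$- and $\A$-saturated preimages $W(B):=q_\BB^{-1}(V(B))$ and $W'(A):=q_\A^{-1}(\varphi^{-1}(V(\varphi(A))))$ then form pairwise disjoint, locally finite families of open neighborhoods of the $B$'s and $A$'s in $X$, and because $q_\A,q_\BB$ are homeomorphisms off the non-degenerate parts, $\varphi'$ restricts to a homeomorphism $W'(A)\setminus A\to W(\varphi(A))\setminus\varphi(A)$ for every $A\in\A^\circ$.

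The core step is, for each pair $(A,B)$ with $B=\varphi(A)$, to construct a local homeomorphism $\Phi_A\colon W'(A)\to W(B)$ with $\Phi_A(A)=B$ that coincides with $\varphi'$ on some open set $N_A\subset W'(A)$ whose closure contains $\partial W'(A)$. Setting $\Phi:=\Phi_A$ on $W'(A)$ and $\Phi:=\varphi'$ on $X\setminus\bigcup_A W'(A)$ then yields a global homeomorphism of $X$ mapping $\A$ onto $\BB$ as decompositions; the induced $\psi$ is $(\A,\BB)$-liftable, equals $\varphi$ on $X/\A\setminus\bigcup V'(A)$ (where $V'(A)=\varphi^{-1}(V(B))$), and sends each $V'(A)$ into $V(B)\subset U\in\U$, giving $(\psi,\varphi)\prec\U$. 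Each $\Phi_A$ is built from the three tameness ingredients of $\K$, used inside the open set $W(B)$, which as an open subspace of $X$ is itself completely metrizable and strongly locally homogeneous: the local shrinkability of $B$ (applied via Theorem~\ref{t:shrink} to the discrete $\K$-tame decomposition of $W(B)$ with $B$ as its sole non-degenerate element) shrinks $B$ into an arbitrarily tiny subset of $W(B)$, and similarly for $A$ inside $W'(A)$; the strong local homogeneity of $X$ then positions the two shrunken sets inside a common small region; and the local shift property produces a single supported homeomorphism exchanging them.

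The main obstacle is that the homeomorphism of pairs $(W'(A),A)\to(W(B),B)$ assembled from the three tameness ingredients need not agree with $\varphi'$ on the annulus $W'(A)\setminus A$; one must still modify it on a small collar of $\partial W'(A)$ inside $W'(A)$ to match $\varphi'$ there while preserving the image $A\mapsto B$. I would carry this out by working inside the annular region $W(B)\setminus B$, itself a strongly locally homogeneous and completely metrizable open subspace of $X$, combining the shrinkability criterion (Theorem~\ref{t:shrink}) with the density of $\sigma$-discrete subsets (Theorem~\ref{t:dense}) to continuously interpolate between the identity near $B$ and the required matching map near $\partial W(B)$, producing the required $\Phi_A$.
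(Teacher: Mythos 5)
Your reduction to a locally finite family of disjoint saturated buffers and your ``shrink--position--shift'' use of the three tameness ingredients is the same circle of ideas as the paper's argument, but the last step of your plan contains a genuine gap. Having assembled a pair homeomorphism $(W'(A),A)\to(W(B),B)$ from local shrinkability, strong local homogeneity and the local shift property, you still must force it to agree with the singleton lift $\varphi'$ near the frontier of $W'(A)$, and you propose to achieve this by ``interpolating between the identity near $B$ and the required matching map near $\partial W(B)$'' inside the annular region $W(B)\setminus B$, citing Theorems~\ref{t:shrink} and \ref{t:dense}. Neither result does anything of this kind: Theorem~\ref{t:dense} only produces a homeomorphism carrying one dense $\sigma$-discrete set onto another and gives no control of the values of a homeomorphism on (a neighborhood of) a prescribed closed set, while Theorem~\ref{t:shrink} merely translates shrinkability into near-homeomorphism statements about quotient maps. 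What you are implicitly invoking is an isotopy-extension/``Alexander trick'' type principle (join an arbitrary self-homeomorphism of an annular region to the identity, rel one end), which is false in this generality and is nowhere proved or used in the paper; as stated, this step would fail. (A secondary defect: agreement of $\Phi_A$ with $\varphi'$ on an open set $N_A$ whose closure merely contains $\partial W'(A)$ does not make the glued map continuous; you would need agreement on all of $W'(A)$ outside a set whose closure misses $\partial W'(A)$.)

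The gap disappears if you reorganize the construction so that agreement with $\varphi'$ is built in rather than imposed afterwards, and this is exactly what the paper does. Since the discrete decompositions $\A,\BB$ are strongly shrinkable (Lemma~\ref{l:dshrink}), the quotient maps $q_\A,q_\BB$ are \emph{strong} near homeomorphisms (Theorem~\ref{t:shrink}); hence one can choose homeomorphisms $\alpha:X\to X/\A$ and $\beta:X\to X/\BB$ which coincide with $q_\A$, $q_\BB$ outside the saturated buffers and satisfy $\alpha(q_\A^{-1}(V_a))=V_a$, $\beta(q_\BB^{-1}(U_b))=U_b$. The homeomorphism $\beta^{-1}\circ\varphi\circ\alpha$ of $X$ then automatically equals $\varphi'$ off the buffers and moves points only inside them; its sole defect is that it carries each $A\in\A^\circ$ onto the compact set $\beta^{-1}\big(\varphi(\alpha(A))\big)$ lying near $B=\varphi(A)$ rather than onto $B$ itself. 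Both $\varphi(\alpha(A))$ and $\beta(B)$ belong to the ambiently invariant tame family $\K(X/\BB)$ and lie in a small neighborhood $U_b$ of the point $b=\varphi(A)$, so a single application of the local shift property yields a correcting homeomorphism $h_b$ supported in $W_b$; composing the resulting $h$ into $\Phi=\beta^{-1}\circ h\circ\varphi\circ\alpha$ gives an $(\A,\BB)$-factorizable homeomorphism whose induced map is $\W$-close to $\varphi$ by the choice of the $W_b$'s. No interpolation on an annulus, and no appeal to Theorem~\ref{t:dense}, is needed anywhere in this argument.
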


\begin{proof} Given a homeomorphism of pairs $f:(\A,\A^\circ)\to(\BB,\BB^\circ)$ and a cover $\W$ of the non-degeneracy part $\BB^\circ$ by open subsets of $\BB$, we need to construct a $(\A,\BB)$-liftable homeomorphism $\varphi:\A\to\BB$ such that $(\varphi,f)\prec\W$.

Since the decomposition $\BB$ is discrete, its non-degeneracy part $\BB^\circ$ is closed and discrete in the decomposition space $\BB=X/\BB$. Then we can choose for every point $b\in \BB^\circ$  an open neighborhood $W_b\subset\BB$ of $b$, which lies in some set of the cover $\W$. Moreover, since the set $\BB^\circ$ is closed and discrete in the metrizable (and collectionwise normal) space $\BB$, we can additionally assume that the indexed family $\{W_b:b\in\BB^\circ\}$ is discrete in $\BB$.

By Definition~\ref{d:K-tame} and Lemma~\ref{l:dshrink}, the discrete decomposition $\BB$ is strongly shrinkable and by Theorem~\ref{t:shrink}, the quotient map $q_\BB:X\to \BB$ is a strong near homeomorphism, which implies that the decomposition space $\BB$ is homeomorphic to $X$. Then  $\K(\BB)=\{h(K):K\in\K,\;\;h\in\HH(\BB,X)\}$ is a tame family of compact subsets in the space $\BB$. This family has the local shift property, which implies that each point $b\in\BB^\circ$ has a neighborhood $U_b\subset W_b$ such that for any compact subsets $K,K'\in \K(\BB)$ of $U_b$ there is a homeomorphism $h_b:\BB\to\BB$ such that $h_b(K)=K'$ and $h_b|\BB\setminus W_b=\id$. Let $U=\bigcup_{b\in\BB^\circ}U_b$.

Since the quotient map $q_\BB:X\to\BB$ is a strong near homeomorphism, there is a homeomorphism $\beta:X\to \BB$ such that $\beta(q_\BB^{-1}(U_b))=U_b$ for every $b\in\BB^\circ$ and
$\beta(x)=\{x\}$ for each $x\in X\setminus q_\BB^{-1}(U)$.

By analogy we shall define a homeomorphism $\alpha:X\to\A$. Namely, for every point $a\in\A^\circ$ consider the open neighborhood $V_a=f^{-1}(U_{f(a)})$ of $a$ in the decomposition space $\A$, and put $V=\bigcup_{a\in\A^\circ}V_a=f^{-1}(U)$. Since the decomposition $\A$ is strongly shrinkable, the quotient map $q_\A:X\to\A$ is a strong near homeomorphism, which allows us to find a homeomorphism $\alpha:X\to \A$ such that $\alpha(q_\A^{-1}(V_a))=V_a$, for every $a\in\A^\circ$, and
$\alpha(x)=\{x\}$ for each $x\in X\setminus q_\A^{-1}(V)$.

For every $b\in\BB^\circ$, consider the point $a=f^{-1}(b)\in\A^\circ$ and the compact subsets $K=\beta(b)$ and $K'=f\circ\alpha(a)$ of $U_b$, which belong to the family $\K(\BB)$. By the choice of the neighborhood $U_b$, there exists a homeomorphism $h_b:\BB\to\BB$ such that $h_b(K')=K$ and $h_b|\BB\setminus W_b=\id$. The homeomorphisms $h_b$, $b\in\BB^\circ$, yield a single homeomorphism $h:\BB\to\BB$ defined by
$$h(y)=\begin{cases}h_b(y)&\mbox{if $y\in W_b$ for some $b\in\BB^\circ$};\\
y&\mbox{otherwise}.
\end{cases}
$$
Consider the homeomorphism $\Phi=\beta^{-1}\circ h\circ f\circ\alpha:X\to X$. The definition of the homeomorphism $h$ implies that for every compact set $a\in\A^\circ$ of $X$ and its image $b=f(a)\in\BB^\circ$ we get $$\Phi(a)=\beta^{-1}\circ h\circ f\circ\alpha(a)=
\beta^{-1}\circ h_b(f\circ\alpha(a))=\beta^{-1}\circ \beta(b)=b.$$
This means that the homeomorphism $\Phi$ is $(\A,\BB)$-factorizable and hence there is a homeomorphism $\varphi:\A\to\BB$ such that $q_\BB\circ\Phi=\varphi\circ q_\A$. The choice of the neighborhoods $W_b$, $b\in\BB^\circ$, guarantees that the  $(\A,\BB)$-liftable homeomorphism $\varphi:\A\to\BB$ is $\W$-near to the homeomorphism $f$.
\end{proof}

\section{Topological equivalence of dense $\K$-tame decompositions}\label{s:dense-Ktame}

In the proof of Theorem~\ref{t5} below we shall widely use multivalued maps; see \cite{RS}. By a multivalued map $\Phi:X\setmap Y$ between sets $X$ and $Y$ we understand any subset $\Phi\subset X\times Y$ of their Cartesian product.
This subset $\Phi$ can be thought of as a multivalued function $\Phi:X\setmap Y$ which assigns to each point $x\in X$ the subset $\Phi(x)=\{y\in Y:(x,y)\in\Phi\}$ of $Y$ and to each subset $A\subset X$ the subset $\Phi(A)=\bigcup_{a\in A}\Phi(a)$ of $Y$.
Usual functions $f:X\to Y$, identified with their graphs $\{(x,f(x)):x\in X\}$, become multivalued (more precisely, singlevalued) functions.

For two multivalued functions $\Psi:X\setmap Y$ and $\Psi:Y\setmap Z$ their composition $\Psi\circ\Phi:X\setmap Z$ is defined as the multivalued function assigning to each point $x\in X$ the subset $\Psi(\Phi(x))$ of $Z$. The inverse $\Phi^{-1}$ of a multivalued function $\Phi:X\setmap Y$ is the multivalued function $\Phi^{-1}=\{(y,x):(x,y)\in\Phi\}\subset Y\times X$, assigning to each point $y\in Y$ the subset $\Phi^{-1}(y)=\{x\in X:y\in\Phi(x)\}$.

\begin{theorem}\label{t5}  For any tame family $\K$ of compact subsets of a strongly locally homogeneous completely metrizable space $X$, and any dense $\K$-tame decompositions $\A,\BB$ of $X$, the set of $(\A,\BB)$-liftable homeomorphisms is dense in the homeomorphism space $\HH^\circ(\A^\circ,\BB^\circ)$.
\end{theorem}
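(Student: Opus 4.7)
\emph{Proof plan.} Given $f\in\HH^\circ(\A^\circ,\BB^\circ)$ and an open cover $\W$ of $\BB^\circ$ by open sets of $\BB$, the aim is to produce a $(\A,\BB)$-liftable homeomorphism $\varphi:\A\to\BB$ with $(\varphi,f)\prec\W$. The strategy is a staged construction modeled on the proof of Theorem~\ref{t:dense}, where single-point shifts are replaced by discrete-layer matchings produced by Theorem~\ref{t4}, and the stages are assembled by a Cauchy limit in the limitation topology. The structural fact underlying the staging is that although $\A^\circ$ and $\BB^\circ$ are dense in the decomposition spaces, they are $\sigma$-discrete there, so one closed-discrete layer can be attacked at a time.

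\emph{Step 1 (Structure and layering).} By Lemma~\ref{l1} and the Shrinkability Criterion (Theorem~\ref{t:shrink}), the decomposition spaces $\A=X/\A$ and $\BB=X/\BB$ are completely metrizable and $\A\cong X\cong\BB$, hence strongly locally homogeneous. The vanishing of $\A$ together with the closedness of $q_\A$ (Lemma~\ref{l2}) implies that for every open cover $\U$ of $X$ the $q_\A$-image of $\{A\in\A^\circ:A\not\subset U\,\forall U\in\U\}$ is closed discrete in $\A$. Applying this to a sequence of open covers of $X$ whose meshes shrink to zero, I obtain $\A^\circ=\bigcup_{n\in\IN}\A^\circ_n$ as an increasing union of closed discrete subsets of $\A$, and set $\BB^\circ_n:=f(\A^\circ_n)$. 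For each $n$ the discrete sub-decomposition $\A_{[n]}:=\A^\circ_n\cup\{\{x\}:x\in X\setminus\bigcup\A^\circ_n\}$ is $\K$-tame: it is discrete hence vanishing, and by Lemma~\ref{l:dshrink} it is strongly shrinkable since each $A\in\A^\circ_n\subset\K$ is locally shrinkable by tameness of $\K$. Analogously for $\BB_{[n]}$.

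\emph{Step 2 (Inductive matching).} Fix a complete metric $d$ on $X$. Using Theorem~\ref{t:shrink} I choose a homeomorphism $\Phi_0:X\to X$ whose factored map is so close to $f$ (in a prescribed fine refinement of $\W$) that every $\Phi_0(A)$ lies in a small neighborhood of $f(A)$. Inductively, having constructed $\Phi_n$ with $\Phi_n(A)=f(A)$ for all $A\in\A^\circ_n$ and with $\Phi_n(A)$ close to $f(A)$ for every $A\in\A^\circ$, I apply Theorem~\ref{t4} to the discrete $\K$-tame decompositions of $X$ generated by the closed discrete families $\Phi_n(\A^\circ_{n+1}\setminus\A^\circ_n)$ and $\BB^\circ_{n+1}\setminus\BB^\circ_n$ (both subsets of $\K$ by ambient invariance), fed with a homeomorphism of pairs obtained by disjointly supported local shifts matching $\Phi_n(A)$ to $f(A)$ (possible because the two are already close in $X$). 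Theorem~\ref{t4} returns a liftable homeomorphism $\Psi_n:X\to X$ with $\Psi_n(\Phi_n(A))=f(A)$ exactly for $A\in\A^\circ_{n+1}\setminus\A^\circ_n$; by choosing the approximation cover finely enough, $\Psi_n$ is the identity on an open neighborhood of $\bigcup\BB^\circ_n$ and satisfies $d(\Psi_n,\id_X)\le 2^{-n}$ and $d(\Psi_n^{-1},\id_X)\le 2^{-n}$. Set $\Phi_{n+1}:=\Psi_n\circ\Phi_n$.

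\emph{Step 3 (Limit and main obstacle).} By the size estimates the sequence $(\Phi_n)$ is Cauchy in the limitation topology on $\HH(X,X)$, and by completeness of $d$ its limit $\Phi:=\lim\Phi_n$ is a homeomorphism. The localization of $\Psi_k$ for $k\ge n$ gives $\Phi|_{\bigcup\A^\circ_n}=\Phi_n|_{\bigcup\A^\circ_n}$, so $\Phi(A)=f(A)$ for every $A\in\A^\circ=\bigcup_n\A^\circ_n$. Consequently $\Phi$ is $(\A,\BB)$-factorizable; its factor $\varphi:\A\to\BB$ is $(\A,\BB)$-liftable and, upon choosing the initial approximation and subsequent corrections sufficiently fine, lies within $\W$ of $f$. \textbf{The principal obstacle} is executing the inductive step: one must simultaneously (i) produce the required discrete-layer match via Theorem~\ref{t4}, (ii) preserve the matchings already established on $\bigcup\BB^\circ_n$ by localizing the correction away from this closed set, and (iii) keep the correction of size $\le 2^{-n}$ for Cauchy convergence. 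All three rely on the closed-discreteness of the individual layers (separating the new elements from the old ones), on the ambient invariance of $\K$ (ensuring $\Phi_n(A)\in\K$ so that Theorem~\ref{t4} applies), and on the local shift property (used to construct the input homeomorphism of pairs fed into Theorem~\ref{t4} at each step); the coordination of these three demands through progressively finer approximation covers is the technical heart of the argument.
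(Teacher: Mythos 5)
Your overall strategy---layering the non-degenerate elements into closed discrete families via the vanishing property, matching one layer at a time with Theorem~\ref{t4}, and passing to a limit---is indeed the skeleton of the paper's argument, but the inductive step as you state it fails at two concrete points. First, with $\BB^\circ_n:=f(\A^\circ_n)$ you control diameters only on the $\A$-side: an element $A$ of your $(n{+}1)$-st layer has $\diam(A)\approx 2^{-n}$, but $f(A)$ may be an element of $\BB^\circ$ of diameter $1$, since $f$ is an arbitrary homeomorphism of decomposition spaces and does not relate the sizes of $A$ and $f(A)$ as subsets of $X$. A homeomorphism $\Psi_n$ with $d(\Psi_n,\id_X)\le 2^{-n}$ satisfies $\Psi_n(\Phi_n(A))\subset O_d(\Phi_n(A),2^{-n})$, a set of diameter at most $\diam(\Phi_n(A))+2^{-n+1}$, so it cannot carry $\Phi_n(A)$ onto a large $f(A)$; the symmetric problem destroys the bound on $\Psi_n^{-1}$. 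This is why the paper removes, at stage $n$, both the $\A$-elements of diameter $\ge 2^{-n}$ and the $h_n$-preimages of the $\BB$-elements of diameter $\ge 2^{-n}$ (and symmetrically on the $\BB$-side), and why the pairing of the layers cannot be fixed in advance by $f$ but must be re-created at every stage --- which is exactly what the extra application of Theorem~\ref{t:dense} inside the paper's inductive step does, re-matching the remaining non-degeneracy parts $\A^\circ_{n+1}$ and $\BB^\circ_{n+1}$ as dense $\sigma$-discrete sets; this step has no counterpart in your outline.

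Second, even after such a symmetrization, your requirement that $\Psi_n$ move points by at most $2^{-n}$ presupposes that $\Phi_n(A)$ is $2^{-n}$-close in the Hausdorff sense to $f(A)$ for every element $A$ of the new layer. Your induction does not provide this: the earlier corrections $\Psi_k$, $k<n$, are only known to be $2^{-k}$-small and are supported near the stage-$k$ targets, in whose neighborhoods the not-yet-processed elements accumulate (the decompositions are dense); so an element to be processed at stage $n$ may drift by amounts of order $2^{-k}\gg 2^{-n}$ before its turn comes, and no $2^{-n}$-small $\Psi_n$ can then finish the job. The paper avoids ever needing such an estimate by not working with self-homeomorphisms of $X$ at finite stages: it carries homeomorphisms $h_n:\A_n\to\BB_n$ between auxiliary decomposition spaces in which all unprocessed elements are collapsed to points (so exact matching at the processing stage is a statement about points, obtained from Theorem~\ref{t4} in those quotients), records the effect on $X$ only through the multivalued maps $\Phi_n=q_{\BB_n}^{-1}\circ h_n\circ q_{\A_n}$, and produces a homeomorphism of $X$ only in the limit, using the two-sided diameter conditions $(8_n)$ and convergence in the hyperspace. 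Without either this device or a quantitative replacement for it (corrections whose displacement of each not-yet-processed element is bounded by that element's own future tolerance), Step 2 of your plan cannot be carried out, so the proposal has a genuine gap precisely at what you call the technical heart.
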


\begin{proof} Given a homeomorphism of pairs $\varphi_0:(\A,\A^\circ)\to(\BB,\BB^\circ)$ and a cover $\W$ of the non-degeneracy part $\BB^\circ$ by open subsets of $\BB$, we need to construct a $(\A,\BB)$-liftable homeomorphism $\varphi:\A\to\BB$ such that $(\varphi,\varphi_0)\prec\W$.

Fix a complete metric $d$ that generates the topology of the completely metrizable space $X$. Replacing $d$ by $\min\{d,1\}$, if necessary, we can assume that $\diam(X)\le 1$. Also fix a metric $\rho\le1$ degenerating the topology of the decomposition space $\BB=X/\BB$ (which is metrizable by Lemma~\ref{l1}).
Choose a continuous function $\e:\BB\to[0,1]$ such that $\e^{-1}(0)=\BB\setminus\bigcup\W$ and for each point $b\in\bigcup\W$ the closed $\e(b)$-ball $\bar O_\rho(b,\e(b))=\{y\in \BB:\rho(y,b)\le\e(b)\}$ is contained in some element of the cover $\W$. Then each map $\varphi:\A\to\BB$ with $\rho(\varphi,\varphi_0)\le\e\circ \varphi_0$ is $\W$-near to the map $\varphi_0$.
So, it suffices to construct a $(\A,\BB)$-liftable homeomorphism $\varphi:\A\to\BB$ such that $\rho(\varphi(a),\varphi_0(a))\le\e\circ\varphi_0(a)$ for every $a\in\A$.

%All the metric notions (like balls, diameters, etc.) in the spaces $X$ and $\BB$ will be referred to the metrics $d$ and $\rho$, respectively.

To find such a homeomorphism $\varphi$, we shall construct inductively two sequences $(\A_n)_{n\in\w}$ and $(\BB_n)_{n\in\w}$ of decompositions of the space $X$, and two sequences of homeomorphisms $(h_n:\A_n\to\BB_n)_{n\in\w}$, $(\varphi_n:\A\to\BB)_{n\in\w}$ between the corresponding decomposition spaces such that for the multivalued functions
$\Phi_n=q_{\BB_n}^{-1}\circ h_n\circ q_{\A_n}:X\setmap X$, $n\in\w$, the
the following conditions are satisfied for every $n\ge 1$:
\begin{enumerate}
\item[$(1_n)$] $\A_{n}^\circ\subset \A^\circ_{n-1}\subset\A^\circ$ and $\BB^\circ_{n}\subset\BB^\circ_{n-1}\subset\BB^\circ$,
\item[$(2_n)$] the families $\A^\circ_{n-1}\setminus \A^\circ_{n}$ and $\BB^\circ_{n-1}\setminus \BB^\circ_{n}$ are discrete in $X$ and\newline contain the families $\{A\in\A_n:\diam(A)\ge 2^{-n+1}\}$ and $\{B\in\BB_n:\diam(B)\ge 2^{-n+1}\}$, respectively;
\item[$(3_n)$] $q_\BB^{\BB_{n}}\circ h_{n}=\varphi_{n}\circ q_\A^{\A_{n}}$;
\item[$(4_n)$] $\rho(\varphi_{n},\varphi_{n-1})\le 2^{-n}\cdot \e\circ\varphi_0$;
\item[$(5_n)$] $\varphi_{n}|\A_0^\circ\setminus\A_{n}^\circ= \varphi_{n-1}|\A_0^\circ\setminus\A_{n}^\circ$;
\item[$(6_n)$] $\varphi_{n}(\A_{n}^\circ)=\BB_{n}^\circ$ and $\varphi_{n}(\A_{n-1}^\circ\setminus\A_{n}^\circ)=\BB_{n-1}^\circ\setminus\BB_{n}^\circ$;
\item[$(7_n)$] $\Phi_{n}|\bigcup(\A_0^\circ\setminus\A_{n-1}^\circ)=
    \Phi_{n-1}|\bigcup(\A_0^\circ\setminus\A^\circ_{n-1})$;
%\item $\diam\big(\Phi_{n+1}(x)\big)\le 2^{-n-1}$ and $\diam \Phi_{n+1}^{-1}(x)\le 2^{-n-1}$ for all $x\in X$;
\item[$(8_n)$] $\diam\big(\Phi_{n}(x)\cup\Phi_{n-1}(x)\big)<2^{-n+2}$ and $\diam\big(\Phi_{n}^{-1}(x)\cup\Phi_{n-1}^{-1}(x)\big)<2^{-n+2}$ for all $x\in X$.
\end{enumerate}

So, for every $n\in\w$ we shall inductively construct decompositions $\A_n$, $\BB_n$, homeomorphisms $h_n:\A_n\to\BB_n$, $\varphi_n:\A\to\BB$, and a multivalued function $\Phi_n:X\setmap X$ making the following diagram commutative
$$\xymatrix{
X\ar[r]^{\Phi_n}\ar[d]_{q_{\A_n}}&X\ar[d]^{q_{\BB_n}}\\
\A_n\ar[r]^{h_n}\ar[d]_{q_\A^{\A_n}}&\BB_n\ar[d]^{q_{\BB_n}^{\BB_{n+1}}}\\
\A\ar[r]_{\varphi_n}&\BB
}
$$

We start the inductive constructing putting $\A_0=\A$, $\BB_0=\BB$, and  $h_0=\varphi_0$.
\smallskip

{\bf Inductive step.} Assume that for some $n\in\w$  decompositions $\A_i$, $\BB_i$, $i\le n$, and homeomorphisms $h_i:\A_i\to\BB_i$, $\varphi_i:\A\to\BB$, $i\le n$, satisfying the conditions $(1_i)$--$(8_i)$, $1\le i\le n$, have been constructed. We should construct  decompositions $\A_{n+1}$ and $\BB_{n+1}$ of $X$ and homeomorphisms $h_{n+1}:\A_{n+1}\to\BB_{n+1}$ and $\varphi_{n+1}:\A\to\BB$.

Consider the decomposition spaces $\A_{n}=X/\A_{n}$, $\BB_{n}=X/\BB_{n}$, and the corresponding quotient maps $q_{\A_{n}}:X\to \A_{n}$ and $q_{\BB_{n}}:X\to \BB_{n}$.

By the conditions $(2_k)$, $k\le n$, the family $\A_0^\circ\setminus\A_{n}^\circ$ is discrete in $X$. Consequently, its union $\bigcup(\A_0^\circ\setminus\A_{n}^\circ)$ is closed in $X$ and its projection $\bar A_n=q_{\A_{n}}\big(\bigcup(\A_0^\circ\setminus\A_{n}^\circ)\big)$ is closed in the decomposition space $\A_{n}=X/\A_{n}$. By the same reason, the set $\bar B_n=q_{\BB_{n}}\big(\textstyle{\bigcup}
(\BB_0^\circ\setminus\BB_{n}^\circ)\big)$ is closed in the decomposition space $\BB_{n}=X/\BB_{n}$.

The density of the decomposition $\A$ implies that the set
$\bigcup\A_0^\circ$ is dense in $X$ and consequently the set
$$\A^\circ_n=q_{\A_n}(\textstyle{\bigcup}\A^\circ_n)=q_{\A_n}\big(\textstyle{\bigcup}\A_0^\circ)\setminus\bar A_n$$
is dense in the open subspace $\A_n\setminus \bar A_n$ of the decomposition space $\A_n=X/\A_n$.
By the same reason, the set $$\BB^\circ_n=q_{\BB_n}(\textstyle{\bigcup}\BB^\circ_n)=q_{\BB_n}\big(\textstyle{\bigcup}\BB_0^\circ)\setminus\bar B_n$$
is dense in the open subspace $\BB_n\setminus \bar B_n$ of the decomposition space $\BB_n=X/\BB_n$.

Since the decomposition $\A$ is vanishing and $\A_n^\circ\subset\A_0^\circ=\A^\circ$, the decomposition $\A_n$ is vanishing too. Consequently, for each $\e>0$ the subfamily $\A^\circ_{n,\e}=\{A\in\A_n:\diam(A)\ge\e\}$ is discrete in $X$, which implies that the set $\A^\circ_{n,\e}=q_{\A_n}(\bigcup\A_{n,\e})$ is closed and discrete in $\A_n$. Since $\A_n^\circ=\bigcup_{k=1}^\infty \A^\circ_{n,2^{-k}}$, the subset $\A^\circ_n$ is $\sigma$-discrete in  $\A_n\setminus \bar A_n$.
By analogy we can show that the set $\BB^\circ_n$ is $\sigma$-discrete in $\BB_n\setminus \bar B_n$.

Now consider the homeomorphisms $h_n:\A_n\to\BB_n$, $\varphi_n:\A\to\BB$, and the induced multivalued function $\Phi_n=q_{\BB_n}^{-1}\circ h_n\circ q_{\A_n}:X\setmap X$. The inductive assumption $(3_n)$, $(5_n)$, and $(6_n)$ imply that
$h_n(\bar A_n)=\bar B_n$ and $h_n(\A^\circ_n)=\BB_n^\circ$.

Since the decomposition $\A$ is vanishing, the family  $\A^\circ_{n,2^{-n}}=\{A\in\A^\circ_n:\diam(A)\ge 2^{-n}\}$ is discrete in $X$ and its image  $\A^\circ_{n,2^{-n}}=q_{\A_n}(\bigcup\A^\circ_{n,2^{-n}})\subset \A^\circ_n$ is a closed discrete subset of the decomposition space $\A_n=X/\A_n$. By the same reason, the family  $\BB^\circ_{n,2^{-n}}=\{B\in\BB^\circ_n:\diam(B)\ge 2^{-n}\}$ is discrete in $X$ and is a closed discrete subset $B^\circ_{n,2^{-n}}=q_{\BB_n}(\bigcup\BB^\circ_{n,2^{-n}})\subset \BB^\circ_n$ of the decomposition space $\BB_n=X/\BB_n$.

The conditions $(3_n)$ and $(6_n)$ of the inductive construction imply that $h_n(\A^\circ_n)=\BB_n^\circ$. Consequently, the closed discrete subset $\A^\circ_{n,2^{-n}}\cup h_n^{-1}(\BB^\circ_{n,2^{-n}})$ of the decomposition space $\A_n=X/\A_n$ is a subset of $\A^\circ_n$. By the same reason, the closed discrete subset $\BB^\circ_{n,2^{-n}}\cup h_n(\A^\circ_{n,2^{-n}})$ of the decomposition space $\BB_n=X/\BB_n$ is a subset of $\BB_n^\circ$. So, we can consider the subfamilies
$$\A^\circ_{n+1}=\big\{q_{\A_n}^{-1}(y):y\in \A^\circ_n\setminus\big(\A^\circ_{n,2^{-n}}\cup h_n^{-1}(\BB^\circ_{n,2^{-n}})\big)\big\}=\A^\circ_n\setminus\big(\A^\circ_{n,2^{-n}}\cup h_n^{-1}(\BB^\circ_{n,2^{-n}})\big)\subset\A^\circ_n$$and
$$\BB^\circ_{n+1}=\big\{q_{\BB_n}^{-1}(y):y\in \BB^\circ_n\setminus(\BB^\circ_{n,2^{-n}}\cup h_n(\A^\circ_{n,2^{-n}})\big)\big\}=\BB^\circ_n\setminus\big(\BB^\circ_{n,2^{-n}}\cup h_n(\A^\circ_{n,2^{-n}})\big)\subset\BB^\circ_n.$$
These subfamilies $\A_{n+1}^\circ\subset\A^\circ_n$ and $\BB^\circ_{n+1}\subset\BB_n^\circ$ generate the decompositions
$$
\begin{aligned}
\A_{n+1}&=\A_{n+1}^\circ\cup\big\{\{x\}:x\in X\setminus\textstyle{\bigcup}\A_{n+1}^\circ\big\}\mbox{ \ and \ }\\ \BB_{n+1}&=\BB_{n+1}^\circ\cup\big\{\{x\}:x\in X\setminus\textstyle{\bigcup}\BB_{n+1}^\circ\big\},
\end{aligned}
$$
of the space $X$, satisfying the conditions $(1_{n+1})$ and $(2_{n+1})$ of the inductive construction.

For every numbers $k,m\in\w$ with $0\le k\le m\le n+1$ the conditions $(1_k)$, $k\le n+1$, guarantee that $\A_k^\circ\subset\A_m^\circ$ and hence $\A_k\prec\A_m$. So, there is a (unique) map $q_{\A_k}^{\A_{m}}:\A_{m}\to\A_k$ making the following triangle commutative:
$$\xymatrix{
&X\ar[rd]^{q_{\A_k}}\ar[ld]_{q_{\A_m}}&\\
\A_m\ar[rr]^{q_{\A_k}^{\A_m}}&&\A_k.
}$$
This map $q_{\A_k}^{\A_{m}}:\A_m\to\A_k$ determines a decomposition $$\A_k^{m}=\big\{(q_{\A_k}^{\A_{m}})^{-1}(y):y\in\A_{k}\big\}=\big\{q_{\A_{m}}(A):A\in\A\big\}$$of the space $\A_{n+1}$. The non-degeneracy part $$(\A_k^{m})^\circ=\big\{q_{\A_{m}}(A):A\in\A_k^\circ\setminus\A_{m}^\circ\}$$ of this decomposition is discrete in $\A_{m}$ by the conditions $(2_i)$, $k<i\le m$, of the inductive construction.

By analogy, for any $0\le k\le m\le n+1$ we can define the map $q_{\BB_k}^{\BB_m}:\BB_m\to\BB_k$ and the corresponding decomposition $\BB_k^m=\{(q_{\BB_k}^{\BB_m})^{-1}(y):y\in\BB_k\}=\{q_{\BB_m}(B):B\in\BB\}$ of the decomposition space $\BB_{m}$.

Now consider the diagram:
$$\xymatrix{
&
X\ar@<2pt>@{..>}[r]^{\Phi_{n+1}}\ar@<-2pt>[r]_{\Phi_n}\ar[d]_{q_{\A_{n+1}}}&X\ar[d]^{q_{\BB_{n+1}}}&
\\
&\A_{n+1}\ar@<2pt>@{..>}[r]^{h_{n+1}}\ar@<-2pt>@{..>}[r]_{\tilde h_{n+1}}\ar[d]_{q_{\A_n}^{\A_{n+1}}}&
\BB_{n+1}\ar[d]^{q_{\BB_n}^{\BB_{n+1}}}&\\
\A_{n}^\circ{\setminus}\A_{n+1}^\circ\ar[r]\ar[d]&
\A_{n}\ar@<2pt>@{..>}[r]^{\tilde h_{n}}\ar@<-2pt>[r]_{h_{n}}\ar[d]_{q_{\A_0}^{\A_{n}}}&
\BB_{n}\ar[d]^{q_{\BB_0}^{\BB_{n}}}&\ar[l]\BB_{n}^\circ{\setminus}\BB_{n+1}^\circ\ar[d]\\
\A^\circ\ar[r]&\A\ar@<2pt>[r]^{\varphi_n}\ar@<-2pt>@{..>}[r]_{\varphi_{n+1}}&\BB&\ar[l]\BB^\circ
}
$$
In this diagram the straight arrows denote the maps which are already defined while dotted arrows denote maps which will be constructed during the inductive step in the following way. First, using Theorem~\ref{t4} we approximate the homeomorphism $h_n$ by a $(\A_n^{n+1},\BB_n^{n+1})$ liftable homeomorphism $\tilde h_n$, which determines a homeomorphism $\tilde h_{n+1}:\A_{n+1}\to\BB_{n+1}$. Then using Theorem~\ref{t:dense} we approximate the homeomorphism $\tilde h_{n+1}$ by a $(\A^{n+1}_0,\BB^{n+1}_0)$-factorizable homeomorphism $h_{n+1}$ such that $h_n(\A_{n+1}^\circ)=\BB_{n+1}^\circ$. The homeomorphism $h_{n+1}$ determines a homeomorphism $\varphi_{n+1}:\A\to\BB$ and the multivalued function $\Phi_{n+1}=q^{-1}_{\BB_{n+1}}\circ h_{n+1}\circ q_{\A_{n+1}}:X\setmap X$, which will satisfy the inductive assumptions $(3_{n+1})$--$(8_{n+1})$.
Now we realize this strategy in details.

The homeomorphism $\tilde h_n$ will differ from the homeomorphism $h_n$ on a neighborhood $U'_n\subset\A_{n}$ of the closed discrete subset $\A_n^\circ\setminus\A_{n+1}^\circ$ of the decomposition space $\A_n$. The neighborhood $U'_n$ will be constructed as follows.

Observe that each element $a\in \A^\circ_n\setminus\A^\circ_{n+1}\subset\A_n$ is a compact subset of the space $X$, equal to its own preimage $q^{-1}_{\A_n}(a)$ under the quotient map $q_{\A_n}:X\to\A_n$. The condition $(2_{n})$ of the inductive construction guarantees that $\diam(a)<2^{-n+1}$. The same is true for any point  $b\in\BB^\circ_n\setminus\BB_{n+1}^\circ=h_n(\A^\circ_n\setminus\A^\circ_{n+1})\subset\BB_n$: it coincides with its own preimage $q_{\BB_n}^{-1}(b)\subset X$ and has diameter $\diam(b)<2^{-n+1}$. Since the non-degeneracy set $\bar B_n=\bigcup(\BB_0^n)^\circ$ of the map $q^{\BB_n}_{\BB_0}:\BB_n\to\BB$ is disjoint  with the closed discrete subset $\BB^\circ_n\setminus \BB_{n+1}^\circ\subset \BB_{n}$, for every point $b\in \BB^\circ_n\setminus\BB_{n+1}^\circ$ we can choose a neighborhood $U_n(b)\subset \BB_n$ such that
\begin{itemize}
\item $U_n(b)\cap \bar B_n=\emptyset$,
\item $\diam\big(q^{-1}_{\BB_n}(U_n(b))\big)<2^{-n+1}$;
\item $\diam\big(q^{-1}_{\A_n}(h_n^{-1}(U_n(b)))\big)<2^{-n+1}$, and
\item $U_n(b)=(q^{\BB_n}_{\BB_0})^{-1}(W_n(b))$ for some open set $W_n(b)\subset\bigcup\W\subset\BB$\newline that has $\rho$-diameter $\diam\big(W_n(b)\big)<2^{-n-1}\cdot\inf \e\circ\varphi_0\circ\varphi_n^{-1}(W_n(b))$.
\end{itemize}
Since the set $\BB^\circ_n\setminus\BB_{n+1}^\circ$ is closed and discrete in the (collectionwise normal) decomposition space $\BB_n$, we can additionally assume that the indexed family $\{U_n(b):b\in \BB^\circ_n\setminus\BB_{n+1}^\circ\}$ is discrete in $\BB_n$.

Then
\begin{enumerate}
\item $U_n=\bigcup\{U_n(b):b\in \BB^\circ_n\setminus\BB_{n+1}^\circ\}$ is an open neighborhood of the closed discrete subset $\BB^\circ_n\setminus\BB_{n+1}^\circ$ in the decomposition space $\BB_n$,
\item $W_n=\bigcup\{W_n(b):b\in \BB^\circ_n\setminus\BB_{n+1}^\circ\}$ is an open neighborhood of the closed discrete subset $\BB^\circ_n\setminus\BB_{n+1}^\circ$ in the decomposition space $\BB$,
\item $U_n'=h_n^{-1}(U_n)$ is an open neighborhood of the closed discrete subset $\A^\circ_n\setminus\A_{n+1}^\circ=h_n^{-1}(\BB^\circ_n\setminus\BB_{n+1}^\circ)$ in the decomposition space $\A_n$, and
\item $W_n'=\varphi_n^{-1}(W_n)$ is an open neighborhood of the closed discrete subset $\A^\circ_n\setminus\A_{n+1}^\circ=\varphi_n^{-1}(\BB^\circ_n\setminus\BB_{n+1}^\circ)$ in the decomposition space $\A$.
\end{enumerate}
The choice of the neighborhoods $U_n(b)$, $b\in\BB_n^\circ\setminus\BB_{n+1}^\circ$, guarantees that $U_n\cap\bar B_n=\emptyset$, which implies $U_n'\cap\bar A_n=\emptyset$.

These sets fit into the following commutative diagram:
$$\xymatrix{
&&\A_{n+1}\ar[d]_{q_{\A_n}^{\A_{n+1}}}&\BB_{n+1}\ar[d]^{q_{\BB_n}^{\BB_{n+1}}}
&&\\
\A_n^\circ\setminus\A_{n+1}^\circ\ar[r]\ar[d]&U_n'\ar[r]\ar[d]&\A_n\ar[r]^{h_n}\ar[d]_{q^{\A_n}_{\A_0}}&
\BB_n\ar[d]^{q^{\BB_n}_{\BB_0}}&U_n\ar[l]\ar[d]
&\BB_n^\circ\setminus\BB_{n+1}^\circ\ar[l]\ar[d]\\
\A_n^\circ\setminus\A_{n+1}^\circ\ar[r]&W_n'\ar[r]&\A\ar[r]^{\varphi_n}&\BB&W_n\ar[l]
&\BB_n^\circ\setminus\BB_{n+1}^\circ\ar[l]\\
}
$$

It follows that $U_n$ is an open neighborhood of the non-degeneracy set $\BB_n^\circ\setminus\BB_{n+1}^\circ$ of the map $q^{\BB_{n+1}}_{\BB_n}:\BB_{n+1}\to\BB_n$ while $U_n'$ is an open neighborhood of the non-degeneracy set $\A_n^\circ\setminus \A_{n+1}^\circ$ of the map $q^{\A_{n+1}}_{\A_n}:\A_{n+1}\to\A_n$.

The shrinkability of the decomposition $\A$ (which follows from the $\K$-tameness of $\A$) implies the shrinkability of the decomposition $\A_{n+1}\prec\A$. Then Theorem~\ref{t:shrink} implies that the quotient map $q_{\A_{n+1}}:X\to\A_{n+1}$ is a near homeomorphism and hence the decomposition space $\A_{n+1}=X/\A_{n+1}$ is homeomorphic to $X$. So, we can consider the tame family $\K(\A_{n+1})=\{f(K):K\in\K,\; f\in\HH(X,\A_{n+1})\}$ of compact subsets of $\A_{n+1}$.

We claim that $(\A_{n}^{n+1})^\circ\subset\K(\A_{n+1})$. Fix any set $A_{n+1}\in(\A_n^{n+1})^\circ$ and consider its preimage $A=q_{\A_{n+1}}^{-1}(A_{n+1})\in \A^\circ_n\setminus\A^\circ_{n+1}$ in $X$. Observe that $A_{n+1}$ is a compact subset of the decomposition space $\A_{n+1}$, disjoint with its non-degeneracy part $\A_{n+1}^\circ$. Since $A\in\A$, the open set $S=X\setminus A$ is $\A$-saturated.
The strong shrinkability of the decomposition $\A$ implies the shrinkability of the decompositions $\A|S$ and $\A_{n+1}|S$. Then Theorem~\ref{t:shrink} and Lemma~\ref{l:approx} imply that the quotient map $q_{\A_{n+1}}:X\to\A_{n+1}$ can be approximated by a homeomorphism $h:X\to \A_{n+1}$ such that $h(A)=A_{n+1}$, which means that the pairs $(X,A)$ and $(\A_{n+1},A_{n+1})$ are homeomorphic and hence $A_{n+1}\in\K(\A_{n+1})$.
So, $\A_n^{n+1}$ is a discrete $\K(\A_{n+1})$-tame decomposition of the space $\A_{n+1}$.

By analogy, we can show that the decomposition $\BB_n^{n+1}$ of the space $\BB_{n+1}$ is discrete and $\K(\BB_{n+1})$-tame for the tame family  $\K(\BB_{n+1})=\{f(K):K\in\K,\;\;f\in\HH(X,\BB_{n+1})\}$ of compact subsets of the decomposition space $\BB_{n+1}$ (which is homeomorphic to $X$).

Now one can apply Theorem~\ref{t4}, and approximate the homeomorphism $h_n:\A_n\to \BB_n$ by a $(\A_n^{n+1},\BB_n^{n+1})$-liftable homeomorphism $\tilde h_n:\A_n\to\BB_n$ such that $(\tilde h_n,h_n)\prec\U_n$ where $\U_n=\{U_n(b):b\in\BB_n^\circ\setminus\BB_{n+1}^\circ\}$. The relation $(\tilde h_n,h_n)\prec\U_n$ implies that $\tilde h_n|X\setminus U_n'=h_n|X\setminus U'_n$ and hence $\tilde h_n|\bar A_n=h_n|\bar A_n$.
The homeomorphism $\tilde h_n$ can be lifted to a homeomorphism $\tilde h_{n+1}:\A_{n+1}\to\BB_{n+1}$ making the following diagram commutative:
$$\xymatrix{
\A_{n+1}\ar[d]_{q_{\A_n}^{\A_{n+1}}}\ar[r]^{\tilde h_{n+1}}&\BB_{n+1}\ar[d]^{q_{\BB_n}^{\BB_{n+1}}}\\
\A_n\ar[r]_{\tilde h_n}&\BB_n
}
$$
Since the homeomorphism $\tilde h_n$ is $(\A_n^{n+1},\BB_n^{n+1})$-liftable it maps the non-degeneracy set $\A_n^\circ\setminus\A_{n+1}^\circ$ of the map $q^{\A_{n+1}}_{\A_n}$ onto the non-degeneracy set
 $\BB_n^\circ\setminus\BB_{n+1}^\circ$ of the map $q^{\BB_{n+1}}_{\BB_n}$. This fact, combined with the equality $\tilde h_n|\bar A_n=h_n|A_n$, implies $\tilde h_{n+1}(\bar A_{n+1})=\bar B_{n+1}$.

Now we shall approximate the homeomorphism $\tilde h_{n+1}$ by a homeomorphism $h_{n+1}:\A_{n+1}\to\BB_{n+1}$ such that $h_{n+1}|\bar A_{n+1}=h_{n+1}|\bar A_{n+1}$ and $h_{n+1}(\A_{n+1}^\circ)=\BB_{n+1}^\circ$.

For this, for every point $b\in \BB^\circ_n\setminus \BB^\circ_{n+1}\subset\BB_n$, consider the open set $U_n(b)\setminus\{b\}$ and its preimage $V_{n+1}(b)=(q_{\BB_n}^{\BB_{n+1}})^{-1}(U_n(b)\setminus\{b\})$ in $\BB_{n+1}$. Then $\V_{n+1}=\{V_{n+1}(b):b\in\BB^\circ_n\setminus\BB^\circ_{n+1}\}$ is an open cover of the open subset $V_{n+1}=\bigcup\V_{n+1}\subset\BB_{n+1}$, which coincides with the set $(q_{\BB_n}^{\BB_{n+1}})^{-1}\big(U_n\setminus(\BB^\circ_n\setminus\BB^\circ_{n+1})\big)$and does not intersect the closed subset $\bar B_{n+1}=\bigcup(\BB^{n+1}_0)^\circ$ of the decomposition space $\BB_{n+1}$.
It follows that the open subset $V_{n+1}'=\tilde h_{n+1}^{-1}(V_{n+1})$ of the decomposition space  $\A_{n+1}$ coincides with the set
$(q_{\A_n}^{\A_{n+1}})^{-1}\big(U'_n\setminus(\A^\circ_n\setminus\A^\circ_{n+1})\big)$ and does not intersect the closed subset $\bar A_{n+1}=\bigcup(\A_0^{n+1})^\circ$ of  $\A_{n+1}$.

The density of the decomposition $\A_0=\A$ implies that the set $q_{\A_{n+1}}\big(\bigcup\A_0^\circ \big)$ is dense in $\A_{n+1}$ and the set $\A_{n+1}^\circ=q_{\A_{n+1}}\big(\bigcup\A_0^\circ\big)\setminus\bar A_n$ is dense in $\A_{n+1}\setminus\bar A_n$. Taking into account that the decomposition $\A_{n+1}$ is vanishing, we conclude that its non-degeneracy part  $\A^\circ_{n+1}=\bigcup_{k\in\w}\A^\circ_{n+1,2^{-k}}$ is $\sigma$-discrete in $\A_{n+1}$. Then $\A_{n+1}^\circ\cap V'_{n+1}$ is a dense $\sigma$-discrete subset in $V'_{n+1}$. By analogy we can show that $\BB_{n+1}^\circ\cap V_{n+1}$ is a dense $\sigma$-discrete subset in $V_{n+1}$. Applying Theorem~\ref{t:dense}, we can approximate the homeomorphism $\tilde h_{n+1}$ by a homeomorphism $h_{n+1}:\A_{n+1}\to\BB_{n+1}$ such that $h_{n+1}(V'_{n+1}\cap \A^\circ_{n+1})=V_{n+1}\cap\BB^\circ_{n+1}$ and $(h_{n+1},\tilde h_{n+1})\prec\V_{n+1}$, which implies that the homeomorphisms $h_{n+1}$ and $\tilde h_{n+1}$ coincide on the set $X\setminus V'_{n+1}\supset\bar A_{n+1}$.

We claim that the homeomorphism $h_{n+1}$ is $(\A^{n+1}_0,\BB^{n+1}_0)$-factorizable. This will follow
as soon as we check that
for every sets $A\in \A^{n+1}_0$ and $B\in \BB^{n+1}_0$ the sets $q^{\BB_{n+1}}_{\BB_0}\circ h_{n+1}(A)\subset\BB$ and $q^{\A_{n+1}}_{\A_0}\circ h^{-1}_{n+1}(B)\subset\A$ are singletons.
First we check that the set  $q^{\BB_{n+1}}_{\BB_0}\circ h_{n+1}(A)\subset\BB$ is a singleton.
This is clear if $A$ is a singleton. So, we assume that $A$ is not a singleton, in which case $A\subset\bigcup(\A_0^{n+1})^\circ=\bar A_{n+1}$, $h_{n+1}|A=\tilde h_{n+1}|A$, and
$$
q_{\BB_0}^{\BB_{n+1}}\circ h_{n+1}(A)
=q_{\BB_0}^{\BB_{n}}\circ q_{\BB_n}^{\BB_{n+1}}\circ \tilde h_{n+1}(A)
=q_{\BB_0}^{\BB_{n}}\circ \tilde h_n\circ q_{\A_n}^{\A_{n+1}}(A)=q_{\BB_0}^{\BB_{n}}\circ h_n\circ q_{\A_n}^{\A_{n+1}}(A).$$
Observe that the set $q_{\A_n}^{\A_{n+1}}(A)$ is an element of the decomposition $\A_0^n$ of the decomposition space $\A_n$. The condition $(3_n)$ of the inductive assumption guarantees that the homeomorphism $h_n$ is $(\A_0^n,\BB_0^n)$-factorizable, which implies that the set $$q^{\BB_n}_{\BB_0}\circ h_n\circ q_{\A_n}^{\A_{n+1}}(A)=q_{\BB}^{\BB_{n+1}}\circ h_{n+1}(A)$$ is a singleton.
By analogy we can check that for every set  $B\in \BB^{n+1}_0$ the set $q^{\BB_{n+1}}_{\A}\circ h^{-1}_{n+1}(B)$ is a singleton in $\A$. This implies that the homeomorphism $h_{n+1}$ is $(\A_0^{n+1},\BB_0^{n+1})$-factorizable and hence there is a homeomorphism $\varphi_{n+1}:\A\to\BB$ such that $q_{\BB_0}^{\BB_{n+1}}\circ h_{n+1}=\varphi_{n+1}\circ q_{\A_0}^{\A_{n+1}}$. So, the condition $(3_{n+1})$ of the inductive construction is satisfied.

To prove the condition $(4_{n+1})$, we need to prove that $\rho(\varphi_{n+1}(a),\varphi_n(a))\le2^{-n-1}\cdot\e\circ\varphi_0(a)$ for each $a\in\A$. This inequality follows from the equality $\varphi_{n+1}(a)=\varphi_n(a)$ if $a\in\A\setminus W_n'$. If $a\in W'_n$, then $\varphi_{n+1}(a),\varphi_n(a)\in W_n(b)$ for some $b\in \BB_n^\circ\setminus\BB_{n+1}^\circ$ and hence $$\rho(\varphi_{n+1}(a),\varphi_n(a))\le\diam\big( W_n(b)\big)\le 2^{-n-1}\cdot\inf \e\circ\varphi_0\circ\varphi_n^{-1}(W_n(b))\le 2^{-n-1}\cdot\e\circ\varphi_0(a).$$

It follows from the construction of the homeomorphisms $h_{n+1}$ and the choice of the neighborhoods $W_n(b)$, $b\in\BB^\circ_n\setminus\BB_{n+1}^\circ$, that
the homeomorphisms $\varphi_{n+1}$ and $\varphi_n$ coincide on the set $(\A\setminus W'_n)\cup (\A_n^\circ\setminus\A_{n+1}^\circ)\supset\A_0^\circ\setminus\A_{n+1}^\circ$. So, the inductive condition $(5_{n+1})$ holds.

Taking into account that the homeomorphism $\tilde h_n$ coincides with the homeomorphism $h_n$ on the set $\A^\circ_n\setminus\A^\circ_{n+1}$ and the homeomorphism $h_{n+1}$ coincides with the lift $\tilde h_{n+1}$ of $\tilde h_n$ on the set $(q_{\A_n}^{\A_{n+1}})^{-1}(\A^\circ_n\setminus\A^\circ_{n+1})=(\A_n^{n+1})^\circ$, we conclude that $\varphi_{n+1}(\A^\circ_n\setminus\A^\circ_{n+1})=\varphi_n(\A^\circ_n\setminus\A^\circ_{n+1})=\BB_n^\circ\setminus\BB_{n+1}^\circ$.
To finish the proof of the condition $(6_{n+1})$, observe that the equalities $\varphi_{n+1}|\A\setminus W_n'=\varphi_n|\A\setminus W'_n$ and $\varphi_{n+1}(W'_n)=W_n$ and the inductive assumption $(6_n)$ imply
$$\varphi_{n+1}(\A^\circ_{n}\setminus W'_n)=\varphi_n(\A^\circ_n\setminus W'_n)=\BB^\circ_n\setminus W_n.$$ On the other hand, the equality $h_{n+1}(\A^\circ_{n+1}\cap V_{n+1}')=\BB^\circ_{n+1}\cap V_{n+1}$ and the definition of the open sets $V_{n+1}$ and $V_{n+1}'$ imply that $\varphi_{n+1}(\A^\circ_{n+1}\cap W'_n)=\BB^\circ_{n+1}\cap W_n$. So, $\varphi_{n+1}(\A_{n+1}^\circ)=\BB_{n+1}^\circ$, which means that the condition $(6_{n+1})$ holds.

To complete the inductive step, it remains to check that the multivalued map $\Phi_{n+1}=q_{\BB_{n+1}}^{-1}\circ h_{n+1}\circ q_{\A_{n+1}}:X\setmap X$ satisfies the conditions $(7_{n+1})$ and $(8_{n+1})$. To see that the condition $(7_{n+1})$ holds, observe that
the map $q_{\A_n}^{\A_{n+1}}:\A_{n+1}\to\A_n$ is injective on the set $\bar A'_n=q_{\A_{n+1}}\big(\bigcup(\A_0\setminus\A_n)\big)$ and the map  $q_{\BB_n}^{\BB_{n+1}}:\BB_{n+1}\to\BB_n$ is injective on the set $\bar B'_n=q_{\BB_{n+1}}\big(\bigcup(\BB_0\setminus\BB_n)\big)$. Taking into account that $h_{n+1}|\bar A_n'=\tilde h_{n+1}|\bar A'_n$ and  $\tilde h_n|\bar A_n=h_n|\bar A_n$, we conclude that $$h_{n+1}|\bar A'_n=\tilde h_{n+1}|\bar A_n'=(q_{\BB_n}^{\BB_{n+1}})^{-1}\circ \tilde h_n\circ q_{\A_n}^{\A_{n+1}}|\bar A'_n=(q_{\BB_n}^{\BB_{n+1}})^{-1}\circ  h_n\circ q_{\A_n}^{\A_{n+1}}|\bar A'_n$$and hence for every $x\in\bigcup(\A_0^\circ\setminus\A_n^\circ)$ we get
$$
\begin{aligned}
\Phi_{n+1}(x)&=q_{\BB_{n+1}}^{-1}\circ h_{n+1}\circ q_{\A_{n+1}}(x)=q_{\BB_{n+1}}^{-1}\circ \tilde h_{n+1}\circ q_{\A_{n+1}}(x)=\\
&=q_{\BB_{n+1}}^{-1}\circ (q_{\BB_n}^{\BB_{n+1}})^{-1}\circ \tilde  h_{n}\circ q_{\A_n}^{\A_{n+1}}\circ q_{\A_{n+1}}(x)=\\
&=\big(q^{\BB_{n+1}}_{\BB_n}\circ q_{\BB_{n+1}}\big)^{-1}\circ h_{n}\circ q_{\A_n}(x) =q_{\BB_n}^{-1}\circ h_{n}\circ q_{\A_n}(x)=\Phi_n(x).
\end{aligned}
$$
So, the condition $(7_{n+1})$ holds.

To check the condition $(8_{n+1})$, fix any point $x\in X$. If the projection $a=q_\A(x)\in\A_0$ does not belong to the open set $W'_n$, then $\Phi_{n}(x)=\Phi_{n+1}(x)\in \BB_{n+1}$ and hence $\diam(\Phi_n(x)\cup\Phi_{n+1}(x))=\diam(\Phi_{n+1}(x))<2^{-n}$ by the condition $(2_{n+1})$ of the inductive construction. So, we assume that $a\in W'_n$ and hence $\varphi_n(a),\varphi_{n+1}(a)\in W_n(b)$ for some element $b\in\BB^\circ_n\setminus\BB^\circ_{n+1}$. The choice of the neighborhood $W_n(b)$ guarantees that the set $q_\BB^{-1}(W_n(b))$ has diameter $<2^{-n+1}$. Taking into account that $$\Phi_n(x)\cup\Phi_{n+1}(x)\subset q_{\BB}^{-1}(\{\varphi_n(a),\varphi_{n+1}(a)\})\subset q_{\BB}^{-1}(W_n(b)),$$ we obtain the desirable inequality
$$\diam\big(\Phi_n(x)\cup\Phi_{n+1}(x)\big)\le \diam \big(q_{\BB}^{-1}(W_n(b))\big)<2^{-n+1}.$$
By analogy we can prove that $\diam\big(\Phi_n^{-1}\cup\Phi_{n+1}^{-1}(x)\big)<2^{-n+1}$.
This completes the inductive step.
\smallskip

After completing the inductive construction, we obtain the sequences of decompositions $(\A_n)_{n\in\w}$, $(\BB_n)_{n\in\w}$ of $X$, the sequences of homeomorphisms $(h_n:\A_n\to\BB_n)_{n\in\w}$, $(\varphi_n:\A\to\BB)_{n\in\w}$ and the sequence $(\Phi_n:X\setmap X)_{n\in\w}$ of multivalued functions, satisfying the conditions $(1_n)$--$(8_n)$, $n\in\IN$, of the inductive construction.

Taking the limit $\Phi=\lim_{n\to\infty}\Phi_n$ of the multivalued functions $\Phi_n$ we shall obtain a $(\A,\BB)$-factorizable homeomorphism $\Phi:X\to X$ inducing a $(\A,\BB)$-liftable homeomorphism $\varphi:\A\to\BB$ of the decomposition spaces.

To define the map $\Phi$, consider for every $x\in X$ the sequence $(\Phi_n(x))_{n\in\w}$ of compact subsets of the space $X$. The conditions $(8_n)$, $n\in\IN$, of the inductive construction guarantee that this sequence is Cauchy in the hyperspace $\exp(X)$ of $X$ endowed with the Hausdorff metric $d_H$, which is complete according to \cite[4.5.23]{En}. Let us recall that the {\em hyperspace} $\exp(X)$ is the space of non-empty compact subsets of $X$, endowed with the {\em Hausdorff metric} $d_H$ defined by the (well-known) formula
$$d_H(A,B)=\max\{\max_{a\in A}d(a,B),\max_{b\in B}d(A,b)\}\mbox{ where $A,B\in\exp(X)$}.$$
We shall identify the metric space $(X,d)$ with the subspace of singletons in $(\exp(X),d_H)$.

The completeness of the hyperspace $(\exp(X),d_H)$ guarantees that the Cauchy sequence $(\Phi_n(x))_{n\in\w}$ has the limit $\Phi(x)=\lim\limits_{n\to\infty}\Phi_n(x)$ in $\exp(X)$. Moreover, the conditions $(8_n)$, $n\in\IN$, imply that
\begin{equation}\label{lim}
d_H(\Phi(x),\Phi_n(x))\le \sum_{k=n}^\infty d_H(\Phi_{k+1}(x),\Phi_k(x))\le \sum_{k=n}^\infty\diam(\Phi_{k+1}(x)\cup\Phi_k(x))<\sum_{k=n}^\infty 2^{-k+1}=2^{-n+2}
\end{equation} for every $n\in\IN$.

Also the conditions $(8_n)$, $n\in\IN$, of the inductive construction imply that $\Phi(x)=\lim_{n\to\infty}\Phi_n(x)$ is a singleton. So, $\Phi:x\mapsto \Phi(x)$ can be thought as a usual singlevalued function $\Phi:X\to X\subset \exp(X)$.

\begin{claim}\label{cl8.2} The function $\Phi:X\to X$ is continuous.
\end{claim}

\begin{proof} Given any point $x_0\in X$ and $\epsilon>0$, we need to find a neighborhood $O(x_0)\subset X$  such that $\Phi(O(x_0))\subset O_d(\Phi(x_0),\epsilon)$ where $O_d(y,\epsilon)=\{x\in X:d(x,y)<\epsilon\}$ denotes the $\epsilon$-ball centered at a point $y\in X$. Find $n\in\IN$ such that $2^{-n+5}<\epsilon$ and consider the multivalued function $\Phi_n=q_{\BB_n}^{-1}\circ h_n\circ q_{\A_n}:X\setmap X$. Consider the point $a=q_{\A_n}(x_0)\in\A_n$ and its image $b=h_n(A)\in\BB_n$, which is a compact subset of $X$. Since the quotient map $q_{\BB_n}:X\to  \BB_n$ is closed, the point $b\in \BB_n$ has an open neighborhood $O(b)\subset \BB_n$ such that $q_{\BB_n}^{-1}(O(b))\subset O_d(b,2^{-n})$. By the continuity of the homeomorphism $h_n:\A_n\to\BB_n$, the point $a\in\A_n$ has a neighborhood $O(a)\subset\A_n$ such that $h_n(O(a))\subset O(b)$. The continuity of the quotient projection $q_{\A_n}$ implies that $O(x_0)=q_{\A_n}^{-1}(O(a))$ is an open neighborhood of the point $x_0\in q_{\A_n}^{-1}(a)$.

We claim that $d(\Phi(x),\Phi(x_0))<\epsilon$ for every $x\in O(x_0)$. Observe that $\Phi_n(x_0)\cup \Phi_n(x)\subset q_{\BB_n}^{-1}\circ h_n\circ q_{\A_n}(\{x,x_0\})\subset q_{\BB_n}^{-1}(h_n(O(a))\subset q_{\BB_n}^{-1}(O(b))\subset O_d(b,2^{-n})$. Now the upper bound (\ref{lim}) implies that
$$\Phi(x)\cup\Phi(x_0)\subset O_d(\Phi_n(x)\cup\Phi_n(x_0),2^{-n+2})\subset O_d(b,2^{-n}+2^{-n+2})\subset O_d(b,2^{-n+3}).$$
Since $b\in \BB_n$, the condition $(2_{n})$ of the inductive construction guarantees that $\diam(b)< 2^{-n+1}$. Consequently, $$d(\Phi(x),\Phi(x_0))\le\diam\big( O_d(b,2^{-n+3})\big)\le \diam(b)+2\cdot 2^{-n+3}\le2^{-n+1}+2^{-n+4}<2^{-n+5}<\epsilon.$$
\end{proof}

\begin{claim}\label{cl8.3} There exists a continuous function $\varphi:\A\to\BB$ such that $q_\BB\circ\Phi=\varphi\circ q_\A$ and $\varphi|\A^\circ_0\setminus\A^\circ_{n}=\varphi_{n}|\A^\circ_0\setminus \A^\circ_{n}$ for all $n\in\IN$.
\end{claim}

\begin{proof} To define the function $\varphi:\A\to\BB$, we shall show that for each element $a\in\A$ the set $q_\BB\circ\Phi(a)\subset\BB$ is a singleton. This is trivially true if the compact subset $a$ of $X$ is a singleton. So, we assume that $a$ is not a singleton and hence $a\in\A^\circ_{n-1}\setminus\A^\circ_{n}$ for some $n\in\IN$. In this case
$a\subset\bigcup(\A_0^\circ\setminus\A_{n}^\circ)$ and hence $\Phi|a=\Phi_{n}|a$ by the conditions $(7_k)$, $k>n$, of the inductive construction. Now we see that the set
$$
\begin{aligned}
q_\BB\circ \Phi(a)&=q_\BB\circ \Phi_{n}(a)=q_\BB\circ q_{\BB_{n}}^{-1}\circ h_n\circ q_{\A_n}(a)=\\
&=q_{\BB}^{\BB_n}\circ q_{\BB_n}\circ q_{\BB_n}^{-1}\circ h_n\circ q_{\A_n}(a)=\\
&=q_{\BB}^{\BB_n}\circ h_n\circ q_{\A_n}(a)=\varphi_n\circ q_{\A}^{\A_n}\circ q_{\A_n}(a)=\\
&=\varphi_n\circ q_\A(a)=\varphi_n(\{a\})=\{\varphi_n(a)\}
\end{aligned}
$$ is a singleton.
So, there is a unique function $\varphi:\A\to\BB$ making the following square commutative:
$$\xymatrix{
X\ar[d]_{q_\A}\ar[r]^{\Phi}&X\ar[d]^{q_\BB}\\
\A\ar[r]_{\varphi}&\BB.
}$$
Taking into account that the functions $\Phi$, $q_\BB$ are continuous, and the function $q_\A$ is closed, we conclude that the function $\varphi$ is continuous.
\end{proof}

By analogy with the proofs of Claims~\ref{cl8.2} and \ref{cl8.3} we can prove

\begin{claim} \begin{enumerate}
\item For every point $x\in X$ the sequence $(\Phi_n^{-1}(x))_{n\in\w}$ of compact subsets of $X$ converges in the hyperspace $(\exp(X),d_H)$ to some singleton $\Psi(x)\subset X$,
\item the function $\Psi:X\to X\subset\exp(X)$, $\Psi:x\mapsto\Psi(x)$, is continuous, and
\item the function $\Psi$ is $(\BB,\A)$-factorizable, which means that the square
$$\xymatrix{
X\ar[r]^{\Psi}\ar[d]_{q_{\BB}}&X\ar[d]^{q_{\A}}\\
\BB\ar[r]_{\psi}&\A
}
$$is commutative for some continuous function $\psi:\BB\to\A$.
\end{enumerate}
\end{claim}

Next, we show that the functions $\Phi$ and $\Psi$ are inverse of each other.

\begin{claim}  $\Phi\circ\Psi(x)=\lim_{n\to\infty}\Phi_n\circ \Phi_n^{-1}(x)$ for every $x\in X$.
\end{claim}

\begin{proof} Given any $\epsilon>0$ we need to find $m\in\IN$ such that $d_H(\Phi\circ\Psi(x),\Phi_n\circ\Phi_n^{-1}(x))<\epsilon$ for all $n\ge m$.

By the continuity of the map $\Phi$ at the singleton $\Psi(x)$, there is $\delta>0$ such that $\Phi\big(O_d(\Psi(x),\delta)\big)\subset O_d(\Phi\circ\Psi(x),\epsilon/2)$. Choose $m\in\IN$ so large that $2^{-m+3}<\min\{\epsilon,\delta\}$ and take any $n\ge m$. By analogy with the equality (\ref{lim}), we can prove that $d_H(\Psi(x),\Phi_n^{-1}(x))<2^{-n+2}$ and hence
$\Phi_n^{-1}(x)\subset O_d(\Psi(x),2^{-n+2})\subset O_d(\Psi(x),\delta)$. The choice of $\delta$ guarantees that $\Phi\circ \Phi^{-1}_n(x)\subset \Phi\big(O_d(\Psi(x),\delta)\big)\subset O_d(\Phi\circ\Psi(x),\epsilon/2)$, which implies $d_H(\Phi\circ \Phi^{-1}_n(x),\Phi\circ \Psi(x))<\epsilon/2$. On the other hand, the equality (\ref{lim}) implies that
$$d_H(\Phi_n\circ\Phi^{-1}_n(x),\Phi\circ\Phi^{-1}_n(x))<2^{-n+2}<\epsilon/2$$and hence
$$d_H(\Phi_n\circ\Phi^{-1}_n(x),\Phi\circ\Psi(x))\le d_H(\Phi_n\circ\Phi^{-1}_n(x),\Phi\circ\Phi^{-1}_n(x))+d_H(\Phi\circ \Phi_n^{-1}(x),\Phi\circ\Psi(x))<\epsilon/2+\epsilon/2=\epsilon.$$
\end{proof}

\begin{claim} $\Phi\circ \Psi(x)=\{x\}$ for all $x\in X$.
\end{claim}

\begin{proof} For every $n\in\IN$, the definition of the multivalued function $\Phi_n$ implies that $$x\in \Phi_n\circ\Phi_n^{-1}(x)\subset q_{\BB_n}^{-1}\circ q_{\BB_n}(x)=q_{\BB_n}(x)\in\BB_n.$$
The condition $(2_{n-1})$ of the inductive construction guarantees that
$$\diam(\Phi_n\circ\Phi_n^{-1}(x))\le \diam\big( q_{\BB_n}(x)\big)<2^{-n+1},$$ which implies that $\Phi_n\circ\Phi_n^{-1}(x)\subset O_d(x,2^{-n+1})$ and hence $\Phi\circ\Psi(x)=\lim_{n\to\infty}\Phi_n\circ\Phi_n^{-1}(x)=\{x\}$.
\end{proof}

By analogy we can prove that $\Psi\circ\Phi(x)=\{x\}$ for all $x\in X$.
So, $\Phi\circ\Psi=\id_X=\Psi\circ\Phi$.
\smallskip

Now consider the commutative diagram
$$
\xymatrix{
X\ar[r]^{\Phi}\ar[d]_{q_{\A}}&X\ar[r]^\Psi\ar[d]^{q_{\BB}}&X\ar[d]^{q_\A}\\
\A\ar[r]_{\varphi}&\BB\ar[r]_{\psi}&\A
}$$
and observe that $\psi\circ\varphi:\A\to\A$ is a unique map such that $q_\A\circ\id_X=q_\A\circ(\Psi\circ\Phi)=(\psi\circ\phi)\circ q_\A$, which implies that $\psi\circ\phi=\id_\A$. By analogy we can prove that $\phi\circ\psi=\id_\BB$. This means that $\varphi:\A\to\BB$ is a $(\A,\BB)$-liftable homeomorphism with the inverse $\varphi^{-1}=\psi$.

To finish the proof of Theorem~\ref{main2}, it remains to check that the homeomorphism $\varphi$ is $\W$-near to the homeomorphism $\varphi_0$. By the choice of the function $\e:\BB\to[0,1]$ this will follow as soon as we check that $\rho(\varphi,\varphi_0)\le\e\circ\varphi_0$.

By the density of the set $\A^\circ$ in $\A$ and the continuity of the functions $\varphi$, $\varphi_0$, and $\e$, it suffices to check that $\rho(\varphi|\A^\circ,\varphi_0|\A^\circ)\le \e\circ\varphi_0|\A^\circ$. Given any point $a\in\A^\circ$,
find a (unique) number $n\in\IN$ with $a\in\A^\circ_{n-1}\setminus\A^\circ_n$. Then $\varphi(a)=\varphi_n(a)$ and hence
$$\rho(\varphi(a),\varphi_0(a))=\rho(\varphi_n(a),\varphi_0(a))\le
\sum_{k=1}^n\rho(\varphi_k(a),\varphi_{k-1}(a))\le\sum_{k=1}^n2^{-k}\e\circ\varphi_0(a)\le\e\circ\varphi_0(a)$$
by the conditions $(4_k)$, $k\in\IN$, of the inductive construction.
\end{proof}

\section{Proof of Theorem~\ref{main2}}\label{s:main2}

In this subsection we shall deduce Theorem~\ref{main2} from Theorems~\ref{t5} and \ref{t:dense}. Given a tame collection $\K$ of compact subsets of a strongly locally homogeneous completely metrizable space $X$ and two dense $\K$-tame decompositions $\A,\BB$ of the space $X$, we need to show that the set of $(\A,\BB)$-liftable homeomorphisms in dense in the homeomorphism space $\HH(\A,\BB)$.

This will be done as soon as for each homeomorphism $f:\A\to\BB$ and an open cover $\U$ of the decomposition space $\BB=X/\BB$ we find an $(\A,\BB)$-liftable homeomorphism $h:\A\to\BB$ which is $\U$-near to $f$. By Lemma~\ref{l1}, the decomposition space $\BB$ is  metrizable and hence paracompact. So, we can find an open cover $\V$ of $\BB$ such that  $\St(\V)\prec\U$.

First we shall find a homeomorphism $g:\A\to\BB$ such that $(g,f)\prec\V$ and $g(\A^\circ)=\BB^\circ$.
Fix any complete metric $d$ generating the topology of the completely metrizable space $X$. Since the decomposition $\BB$ is vanishing, for every $\epsilon>0$ the subfamily $\BB^\circ_\e=\{B\in\BB:\diam(B)>\epsilon\}$ is discrete in $X$ and hence $\BB^\circ_\epsilon$ is a closed discrete subset in the decomposition space $\BB$. Since $\BB^\circ=\bigcup_{n\in\w}\BB^\circ_{2^{-n}}$, we see that the non-degeneracy part $\BB^\circ$ of the (dense) decomposition $\BB$ is $\sigma$-discrete (and dense) in $\BB$.

By analogy we can show that the non-degeneracy part $\A^\circ$ of the decomposition $\A$ is dense and $\sigma$-discrete in the decomposition space $\A$. Then $f(\A^\circ)$ is a dense $\sigma$-discrete subset of the decomposition space $\BB$.

By Theorem~\ref{t:shrink}, the quotient map $q_\BB:X\to X/\BB$ is a strong near homeomorphism, which implies that the decomposition space $\BB$ is homeomorphic to $X$ and hence is strongly locally homogeneous and completely metrizable. Now it is legal to apply Theorem~\ref{t:dense} and find a homeomorphism $h:\BB\to\BB$ such that $(h,\id)\prec\V$ and $h\big(f(\A^\circ)\big)=\BB^\circ$. Then the homeomorphism $g=h\circ f:\A\to\BB$ maps $\A^\circ$ onto $\BB^\circ$ and is $\V$-near to $f$.

Since $g(\A^\circ)=\BB^\circ$, the homeomorphism $g:\A\to\BB$  belongs to the space $\HH^\circ(\A^\circ,\BB^\circ)$. Applying Theorem~\ref{t5}, find a $(\A,\BB)$-liftable homeomorphism $\varphi:\A\to\BB$ such that $(\varphi,g)\prec\V$. It follows from $(\varphi,g)\prec\V$ and $(g,f)\prec\V$ that $(\varphi,f)\prec\St(\V)\prec\U$. So, $\varphi:\A\to\BB$ is a required $(\A,\BB)$-liftable homeomorphism, which is $\U$-near to the homeomorphism $f$.

\section{Existence of $\K$-tame decompositions}\label{s:exist}

In this section we shall prove Theorem~\ref{t:exist}. Let $(X,d)$ be a metric space and $\K$ be a tame family of compact subsets of $X$ containing more than one point. Given a non-empty open subset $U\subset X$, we need to construct a $\K$-tame decomposition $\DD$ of $X$ such that $\bigcup\DD^\circ$ is a dense subset of $U$.

By induction for every $n\in\w$ we shall construct a discrete subfamily $\DD_n\subset \K$ and for every $D\in\DD_n$ an open neighborhood $U_n(D)\subset X$ of $D$, and a homeomorphism $h_{n,D}:X\to X$ such that the following conditions are satisfied:
\begin{enumerate}
\item $\DD_n\supset\DD_{n-1}$;
\item $\bigcup\DD_n\subset U$ and for each $u\in U$ there is a point $x\in\bigcup\DD_n$ with $d(x,u)<2^{-n}$;
\item $D\subset U_n(D)\subset U$ for every $D\in\DD_n$;
\item $U_n(D)\subset U_{n-1}(D)\cap O_d(D,2^{-n-1})$ for each $D\in\DD_{n-1}$, and $\diam \big( U_n(D)\big)<2^{-n}$ for every $D\in\DD_n\setminus\DD_{n-1}$;
\item the family $\big(U_n(D)\big)_{D\in\DD_n}$ is discrete in $X$;
\item for each $k<n$, $D\in\DD_k$ and $D'\in \DD_n\setminus \DD_{n-1}$ either  $\bar U_n(D')\cap \bar U_k(D)=\emptyset$ or else $U_n(D')\subset U_k(D)$ and $\diam\big(h_{k,D}(U_n(D'))\big)<2^{-n}$, and
\item $h_{n,D}|X\setminus U_n(D)=\id$ and $\diam(h_{n,D}(D))<2^{-n}$ for each $D\in\DD_n$.
\end{enumerate}
We start the inductive construction by letting $\DD_{-1}=\emptyset$. Assume that for some $n\in\w$ the  families $\DD_k$, neighborhoods $U_k(D)$, $D\in\DD_k$, and homeomorphisms $h_{k,D}$, $D\in\DD_k$, have been constructed for all $k<n$. The inductive assumption (5) implies that the union $B=\bigcup_{k<n}\bigcup_{D\in\DD_k}\partial U_k(D)$ of boundaries of the open sets $U_k(D)$ is a closed nowhere dense subset in $X$.

Consider the subset $V=U\setminus O_d(\bigcup\DD_{n-1},2^{-n})$ and the dense subset $W=V\setminus B$ of $V$. Using Zorn's Lemma, find a maximal subset $S\subset W$, which is $2^{-n-1}$-{\em separated} in the sense that $d(x,y)\ge 2^{-n-1}$ for any distinct points $x,y\in S$.

\begin{claim} For every point $v\in V$ there is a point $s\in S$ such that $d(s,v)<\frac34\cdot{2^{-n}}$.
\end{claim}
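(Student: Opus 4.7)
The plan is a short two-step argument that combines the density of $W$ in $V$ with the maximality of $S$ as a $2^{-n-1}$-separated subset of $W$, closed off by the triangle inequality. Given $v \in V$, I would first produce a point $w \in W$ very close to $v$ (closer than $2^{-n-2}$), and then use maximality of $S$ to locate a point $s \in S$ within $2^{-n-1}$ of $w$.

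For the first step, since $W$ is asserted to be a dense subset of $V$, there is a point $w \in W$ with $d(w,v) < 2^{-n-2}$. For the second step, I would split into two trivial cases. If $w \in S$, set $s = w$, so that $d(s,w) = 0$. Otherwise $w \in W \setminus S$, and maximality of $S$ forces $S \cup \{w\}$ to violate the $2^{-n-1}$-separation property; hence there exists $s \in S$ with $d(s,w) < 2^{-n-1}$. Either way, we have $s \in S$ with $d(s,w) < 2^{-n-1}$.

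The triangle inequality then yields
\[
d(s,v) \le d(s,w) + d(w,v) < 2^{-n-1} + 2^{-n-2} = \tfrac{3}{4}\cdot 2^{-n},
\]
as required. Note that strict inequality is preserved in the sum because each of the summands $d(s,w)$ and $d(w,v)$ is bounded strictly by its respective constant, and the triangle inequality provides the first (non-strict) step.

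The argument is entirely routine, and there is no essential obstacle: the numerics $\tfrac12 + \tfrac14 = \tfrac34$ are precisely the content of the choice of the constants $2^{-n-1}$ (the separation radius used to define $S$) and $2^{-n-2}$ (the approximation tolerance used when invoking density of $W$ in $V$). The only point deserving mild care is to use strict inequalities throughout so that the final bound is also strict.
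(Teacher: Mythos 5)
Your argument is correct and is essentially the paper's proof in contrapositive form: the paper assumes $d(v,s)\ge\frac34\cdot 2^{-n}$ for all $s\in S$ and derives that a point $w\in O_d(v,2^{-n-2})\cap W$ would make $S\cup\{w\}$ a larger $2^{-n-1}$-separated subset of $W$, contradicting maximality, while you run the same density--maximality--triangle-inequality combination directly with the same constants $2^{-n-2}+2^{-n-1}=\frac34\cdot2^{-n}$. No gap; nothing further needed.
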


\begin{proof} Assume that $d(v,s)\ge \frac34\cdot {2^{-n}}$ for all $s\in S$. Then for any point $w\in O_d(v,2^{-n-2})\setminus B$ and each $s\in S$ we get $d(w,s)\ge d(v,s)-d(v,w)> \frac34{2^{-n}}-\frac14{2^{-n}}=2^{-n-1}$. Consequently, the set $S\cup\{w\}\subset W$ is $2^{-n-1}$-separated, which contradicts the maximality of $S$.
\end{proof}

For each point $s\in S$ chose a positive number $\e_s<2^{-n-3}$ such that for the open $\e_s$-ball $U_s=O_d(s,\e_s)$ and any $k<n$ and $D\in\DD_k$ the following conditions hold:
\begin{itemize}
\item $U_s\subset W$;
\item if $s\in U_k(D)$, then $\overline{U}_s\subset U_k(D)$ and $\diam(h_{k,D}(U_s))<2^{-n}$, and
\item if $s\notin U_k(D)$, then $\bar U_n(D')\cap \bar U_k(D)=\emptyset$.
\end{itemize}
By Definition~\ref{d:K-tame}, we can find in each ball $U_s$ a set $K_s\in\K$. Put $\DD_n=\DD_{n-1}\cup\{K_s,s\in S\}$. The choice of the set $S$ and the numbers $\e_s$, $s\in S$, guarantees that the family $\DD_n$ is discrete in $X$ and satisfies the conditions (1) and (2) of the inductive construction.
For each $D\in\DD_n$ put $U_n(D)=U_s$ if $D=K_s$ for some $s\in S$ and $U_n(D)=O_d(D,2^{-n-1})\cap U_{n-1}(D)$ if $D\in\DD_{n-1}$. It is easy to see that the family $\big(U_n(D)\big)_{D\in\DD_n}$ satisfies the conditions (3)--(6) of the inductive construction.
Since each set $D\in\DD_n\subset\K$ is locally shrinkable, there is a homeomorphism $h_{n,D}:X\to X$ satisfying the condition (7) of the inductive construction.
This completes the inductive step.

After completing the inductive construction, we obtain a disjoint subfamily $\DD_\w=\bigcup_{n\in\w}\DD_n\subset\K$ inducing the decomposition
$$\DD=\DD_\w\cup\big\{\{x\}:x\in X\setminus\textstyle{\bigcup}\DD_\w\big\}$$of $X$. Taking into account that the family $\K\supset\DD_\w$ does not contains singletons, we conclude that $\DD^\circ=\DD_\w\subset\K$.  The condition (2) of the inductive construction guarantees that the union $\bigcup\DD^\circ=\bigcup_{n\in\w}\DD_n$ is dense in $U$.

\begin{claim} The decomposition $\DD$ is vanishing.
\end{claim}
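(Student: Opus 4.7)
The plan is to unpack the definition directly: given an arbitrary open cover $\U$ of $X$, I need to show that the subfamily $\DD' = \{D \in \DD : \forall U \in \U,\ D \not\subset U\}$ is discrete in $X$, i.e.\ every point of $X$ has a neighborhood meeting at most one member of $\DD'$. The two structural ingredients from the inductive construction that I will lean on are condition $(5)$ (the family $\{U_n(D)\}_{D \in \DD_n}$ is discrete for every fixed $n$) and the diameter bound built into condition $(4)$ (elements freshly added at stage $m$ satisfy $\diam(D)<2^{-m}$).

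Fix $x \in X$. First I choose some $U_x \in \U$ containing $x$ together with $\delta > 0$ so small that $O_d(x,2\delta) \subset U_x$, and then I pick $n \in \IN$ with $2^{-n} < \delta$. Applying condition $(5)$ at level $n$, I obtain a neighborhood $O_x \subset O_d(x,\delta)$ of $x$ that meets at most one set of the discrete family $\{U_n(D)\}_{D \in \DD_n}$, say $U_n(D_0)$ (possibly none). I claim this $O_x$ meets at most one element of $\DD'$, namely $D_0$ if it shows up at all.

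To verify the claim I examine an arbitrary $D \in \DD'$ with $D \cap O_x \neq \emptyset$. If $D = \{y\}$ is a degenerate element of $\DD$ (so $y \in X \setminus \bigcup \DD_\omega$), then $D \subset O_x \subset U_x$, contradicting $D \in \DD'$. Otherwise $D \in \DD_\omega$, so $D \in \DD_m \setminus \DD_{m-1}$ for some $m$. If $m > n$, then condition $(4)$ gives $\diam(D) \le \diam(U_m(D)) < 2^{-m} < \delta$, and since $D$ meets $O_x \subset O_d(x,\delta)$, we get $D \subset O_d(x,2\delta) \subset U_x$, again contradicting $D \in \DD'$. The remaining possibility is $m \le n$, which places $D$ in $\DD_n$ by condition $(1)$; condition $(3)$ gives $D \subset U_n(D)$, so $U_n(D)$ meets $O_x$ and therefore $D = D_0$ by the choice of $O_x$.

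I do not anticipate a serious obstacle here: the argument is essentially a diameter-versus-neighborhood bookkeeping exercise, and the conditions were designed precisely to make it go through. The only mild subtlety is that one must split $\DD'$ into singleton components, ``old'' members in $\DD_n$, and ``young'' members added at levels $m > n$, and handle each separately; in each case the contradiction or conclusion is immediate from a single condition of the inductive construction.
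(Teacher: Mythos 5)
Your proof is correct and follows essentially the same route as the paper: fix a scale $n$ so that a small ball around $x$ lies in a cover element, rule out members added after stage $n$ via the diameter bound in condition (4), and then invoke discreteness at level $n$ (you use condition (5) for the neighborhoods $U_n(D)$ together with $D\subset U_n(D)$, while the paper appeals directly to the discreteness of the family $\DD_n$ — an immaterial difference).
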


\begin{proof} Given an open cover $\U$ of $X$ we need to check that the subfamily $$\DD'=\{D\in\DD:\forall U\in\U\;\;D\not\subset U\}$$ is discrete in $X$. This will follow as soon as for each point $x\in X$ we find a neighborhood $O_x\subset X$ of $x$ that meets at most one set $D\in\DD'$.
Find $n\in\w$ such that the ball $O_d(x,2^{-n})$ is contained in some set $U\in\U$. We claim that the family $\DD_x=\{D\in\DD':D\cap O_d(x,2^{-n-1})\ne\emptyset\}$ lies in $\DD_n$. Assume for a contradiction that the family $\DD_x$ contains some set $D\in\DD'\setminus\DD_n$. Then  $\diam(D)<2^{-n-1}$ by the condition (4) of the inductive construction. Taking into account that $D\cap O_d(x,2^{-n-1})\ne\emptyset$, we conclude that $D\subset O_d(x,2^{-n})\subset U$, which contradicts $D\in\DD'$. So, $\DD_x\subset\DD_n$. Since the family $\DD_n$ is discrete in $X$, the point $x$ has a neighborhood $O_x\subset O_d(x,2^{-n-1})$ that meets at most one set of the family $\DD_n$. Then the neighborhood $O_x$ meets at most one set of the families $\DD_x$ and $\DD'$, thus showing that the family $\DD'$ is discrete in $X$ and $\DD$ is vanishing.
\end{proof}

To complete the proof of Theorem~\ref{t:exist}, it remains to check that the decomposition $\DD$ is strongly shrinkable. Given a $\DD$-saturated open subset $W\subset X$, a $\DD$-saturated open cover $\U$ of $W$, and an open cover $\V$ of $W$, we need to construct a homeomorphism $h:W\to W$ such that $(h,\id)\prec\U$ and $\{h(D):D\in\W,\;D\subset W\}\prec\V$.

\begin{claim}\label{cl:sv} The family $\DD'=\{D\in\DD:D\subset W,\;\;\forall V\in\V\;\;D\not\subset V\}$ is discrete in $W$.
\end{claim}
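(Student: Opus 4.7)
The plan is to use a continuous Lebesgue radius function for $\V$ to convert the hypothesis ``$D\not\subset V$ for every $V\in\V$'' into a quantitative lower bound on $\diam(D)$, and then to appeal to the vanishing of $\DD$ to conclude that such diameter bounds make $\DD'$ discrete near each point of $W$. Concretely, I will define $\lambda\colon W\to(0,\infty)$ by
$$\lambda(w)=\tfrac12\sup\{r>0:O_d(w,r)\subset V\text{ for some }V\in\V\}.$$
Since every $w\in W$ lies in some $V\in\V$, we have $\lambda(w)>0$, and a direct estimate gives $|\lambda(w)-\lambda(w')|\le d(w,w')$, so $\lambda$ is continuous. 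By construction, $O_d(w,\lambda(w))\subset V$ for some $V\in\V$.

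The next step turns non-containment of $D\in\DD'$ in the cover elements into a diameter bound: for any $D\in\DD'$ and any $d\in D$, the ball $O_d(d,\lambda(d))$ lies in some $V\in\V$, but $D\not\subset V$, hence $D\not\subset O_d(d,\lambda(d))$, and therefore $\diam(D)\ge\lambda(d)$. In parallel, I will use the vanishing property of $\DD$ to show that for each $r>0$ the subfamily $\DD_{\ge r}=\{D\in\DD:\diam(D)\ge r\}$ is discrete in $X$. This follows by applying vanishing to the open cover $\{O_d(y,r/3):y\in X\}$ of $X$: the resulting family $\{D\in\DD:\forall y\in X,\ D\not\subset O_d(y,r/3)\}$ is discrete in $X$ and contains $\DD_{\ge r}$, because any ball of radius $r/3$ has diameter at most $2r/3<r$.

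Finally, given any $x\in W$, continuity of $\lambda$ produces an open neighborhood $O_x\subset W$ of $x$ on which $\lambda(y)>\lambda(x)/2$, and I may shrink $O_x$ so that it meets at most one set of the discrete family $\DD_{\ge\lambda(x)/2}$. If $D\in\DD'$ meets $O_x$ at some $y$, then $\diam(D)\ge\lambda(y)>\lambda(x)/2$, so $D\in\DD_{\ge\lambda(x)/2}$; hence $O_x$ meets at most one element of $\DD'$, proving discreteness of $\DD'$ in $W$. The only real obstacle is producing the Lebesgue radius $\lambda$ with the required continuity and positivity; once this is in hand, the conclusion is immediate from vanishing combined with the quantitative bound $\diam(D)\ge\lambda(d)$.
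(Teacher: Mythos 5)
Your proof is correct, but it takes a genuinely different route from the paper's. The paper argues by contradiction: if $\DD'$ fails to be discrete at some point $x\in W$, it uses regularity of the metrizable space $X$ to choose a closed neighborhood $N_x\subset W$ of $x$ and then applies the vanishing of $\DD$ to the open cover $\V\cup\{X\setminus N_x\}$ of the whole space $X$; any $D\in\DD'$ meeting a small neighborhood of $x$ inside $\Int(N_x)$ is contained neither in $X\setminus N_x$ nor in any $V\in\V$, so non-discreteness at $x$ contradicts vanishing. You instead argue directly: the Lipschitz Lebesgue-radius function $\lambda$ converts the hypothesis that $D$ lies in no element of $\V$ into the quantitative bound $\diam(D)\ge\lambda(p)$ for every $p\in D$, and you then invoke the fact (used repeatedly elsewhere in the paper, e.g. in the proof of Theorem~\ref{t5}) that vanishing makes $\{D\in\DD:\diam(D)\ge r\}$ discrete in $X$ for each $r>0$. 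The paper's argument is shorter and needs nothing beyond regularity plus the cover-augmentation trick; yours avoids the contradiction, produces explicit neighborhoods, and isolates a reusable quantitative statement. Two small points to tidy: your $\lambda$ could take the value $\infty$ (if some element of $\V$ contains arbitrarily large balls), so either cap the supremum at $1$ or note that, by your Lipschitz estimate, infiniteness at one point forces infiniteness everywhere, in which case every compact $D\subset W$ lies in a single element of $\V$ and $\DD'=\emptyset$, making the claim trivial; and avoid using the symbol $d$ both for the metric and for a point of $D$.
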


\begin{proof} Assuming that the disjoint family $\DD'$ is not discrete in $W$, find a point $x\in W$ such that each neighborhood $O_x\subset W$ meets infinitely many sets of the family $\DD'$. By the regularily of the metrizable space $X$, the point $x$ has a closed  neighborhood $N_x\subset X$ such that $N_x\subset W$. Then the open cover $\V_X=\V\cup\{X\setminus N_x\}$ witnesses that the decomposition $\DD$ is not vanishing in $X$, which is a desired contradiction.
\end{proof}

By Claim~\ref{cl:sv}, the family $\DD'$ is discrete in $W$. Consequently, for each set $D\in\DD'$ we can find an open neighborhood $O(D)\subset W$ such that the family $\{O(D)\}_{D\in\DD'}$ is discrete in $W$.
Since each set $D\in\DD'$ is compact, we can find a number
$n_D\in\IN$ so large that
\begin{itemize}
\item $D\in \DD_{n_D}$;
\item $O_d(D,2^{-n_D})\subset O(D)\cap U$ for some $\DD$-saturated open set $U\in\U$, and
\item each subset $B\subset O_d(D,2^{-n_D})$ of diameter $\diam(B)<2^{-n_D}$ lies in some set $V\in\V$.
\end{itemize}
Now consider the homeomorphism $h:W\to W$ defined by
$$h(x)=\begin{cases}h_{n_D,D}(x)&\mbox{if $x\in U_{n_D}(D)$ for some $D\in\DD'$}\\
x&\mbox{otherwise}
\end{cases}
$$The conditions (4) and (7) of the inductive construction and the choice of the numbers $n_D$, $D\in\DD'$, guarantee that $h$ is a well-defined homeomorphism of $W$ with $(h,\id_W)\prec\U$.

Next we show that for each set $K\in\DD$ the image $h(K)$ lies in some set $V\in\V$. This is clear if $K$ is a singleton. So, assume that the set $K\in\DD$ is not a singleton.
If $K=D$ for some $D\in\DD'$, then $\diam(h(K))=\diam(h(D))=\diam(h_{n_D}(D))<2^{-n_D}$ by the condition $(7)$ of the inductive assumption and hence $h(D)\subset V$ for some set $V\in\V$ by the choice of the number $n_D$.
Next, assume that $K\notin\DD'$. Find a unique number $k\in\w$ such that $K\in\DD_k\setminus\DD_{k-1}$. If $K\subset U_{n_D}(D)$ for some $D\in\DD'$, then $k>n_{D}$ by the condition (5) of the inductive construction, and the set $h(K)=h_{n_D,D}(K)$ has diameter $\diam(h(K))<2^{-n_{D}}$ by the condition (6) of the inductive construction.

If $K\not\subset U_{n_D}(D)$ for all $D\in\DD'$, then $K$ is disjoint with the union $\bigcup_{D\in\DD'}U_{n_{D}(D)}$ by the condition (6) of the inductive construction and then $h(K)=K\subset V$ for some $V\in\V$ by the definition of the family $\DD'\ni K$.

\section{Topological equivalence and universality of $\K$-spongy sets}

In this section we shall derive from Corollary~\ref{c2.6} a general version of Theorem~\ref{main1} treating so-called $\K$-spongy sets.

\begin{definition} Let $\K$ be a tame family of compact subsets of a topological space $X$ such that each set $K\in\K$ has non-empty interior $\Int(K)$ in $X$. A subset $S\subset X$ is called {\em $\K$-spongy} if there is a dense $\K$-tame decomposition $\DD$ of $X$ such that $X\setminus S=\bigcup\{\Int(D):D\in\DD\}$.
\end{definition}

Theorem~\ref{main1} will be derived from the following more general theorem.

\begin{theorem}\label{main1K} Let $X$ be a strongly locally homogeneous completely metrizable space, and $\K$ be a tame family of compact subsets  $X$ such that each set $K\in\K$ contains more than one point and has a non-empty interior in $X$. Then:
\begin{enumerate}
\item Each nowhere dense subset of $X$ lies in a $\K$-spongy subset of $X$;
\item Any two $\K$-spongy subsets of $X$ are ambiently homeomorphic, and
\item Any $\K$-spongy subset of $X$ is a universal nowhere dense subset in $X$.
\end{enumerate}
\end{theorem}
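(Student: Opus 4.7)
The three parts follow by combining Theorem~\ref{t:exist} (existence of $\K$-tame decompositions with a prescribed locus of non-degeneracy) with Corollary~\ref{c2.6} (any two dense $\K$-tame decompositions of $X$ are topologically equivalent).

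For part~(1), given a nowhere dense $A\subset X$, set $U:=X\setminus\bar A$, which is open and dense in $X$. Apply Theorem~\ref{t:exist} to $U$ to obtain a $\K$-tame decomposition $\DD$ of $X$ such that $\bigcup\DD^\circ$ is dense in $U$ and hence in $X$; consequently $\DD^\circ$ is dense in the decomposition space $X/\DD$, so $\DD$ is a dense $\K$-tame decomposition. Define $S:=X\setminus\bigcup_{D\in\DD}\Int(D)$; by construction $S$ is $\K$-spongy. For every $D\in\DD^\circ\subset\K$ one has $\Int(D)\subset D\subset U\subset X\setminus A$, and if a singleton $\{x\}\in\DD$ had nonempty interior then $x$ would be an isolated point of $X$ which cannot belong to $A$ (else $\{x\}$ would be an open subset of $\bar A$, contradicting nowhere density of $A$). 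Therefore $\bigcup_{D\in\DD}\Int(D)\cap A=\emptyset$, i.e.\ $A\subset S$.

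For part~(2), let $S_1,S_2$ be $\K$-spongy subsets associated to dense $\K$-tame decompositions $\DD_1,\DD_2$. Corollary~\ref{c2.6} supplies a homeomorphism $\Phi:X\to X$ with $\Phi(\DD_1)=\DD_2$. Since homeomorphisms preserve interiors, $\Phi\bigl(\bigcup_{D\in\DD_1}\Int(D)\bigr)=\bigcup_{D'\in\DD_2}\Int(D')$, whence $\Phi(X\setminus S_1)=X\setminus S_2$ and $\Phi(S_1)=S_2$. Part~(3) is then immediate: given a $\K$-spongy $S$ and an arbitrary nowhere dense $A\subset X$, use~(1) to place $A$ inside a $\K$-spongy set $S'$ and~(2) to produce a homeomorphism $h:X\to X$ with $h(S')=S$, so $h(A)\subset S$.

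The heavy lifting is done by Theorem~\ref{t:exist} and Corollary~\ref{c2.6}; what remains is essentially bookkeeping. The one delicate verification, and the main obstacle, is showing that a $\K$-spongy $S$ is itself nowhere dense in $X$, so that ``universal nowhere dense'' is meaningful. This amounts to asserting that $\bigcup_{D\in\DD}\Int(D)$ is dense in $X$, which does \emph{not} formally follow from density of $\DD^\circ$ in the decomposition space $X/\DD$ alone. I would handle this by inspecting the construction in Theorem~\ref{t:exist}: at stage $n$ the centers $s$ of the sets $K_s\in\K$ added to $\DD^\circ$ are chosen to be $2^{-n}$-dense in the shrinking open set $V\subset U$, and each $K_s$ has nonempty interior contained in a ball of radius $<2^{-n-3}$ around $s$. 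Hence $\bigcup_{D\in\DD^\circ}\Int(D)$ is dense in $U$ (and so in $X$ when $U$ is dense) for the decomposition produced in part~(1), proving that the constructed $\K$-spongy set is nowhere dense; by part~(2) this property transfers to every $\K$-spongy subset of $X$.
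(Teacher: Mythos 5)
Your proposal is correct and takes essentially the same route as the paper: part (1) is Theorem~\ref{t:exist} applied to $U=X\setminus\bar A$, part (2) is Corollary~\ref{c2.6}, and part (3) combines the two, exactly as in the paper's proof of Theorem~\ref{main1K}. The extra verification you flag, that a $\K$-spongy set is nowhere dense, is indeed left implicit in the paper; your route (inspecting the construction of Theorem~\ref{t:exist} and transferring via part (2)) is a legitimate way to supply it, and it can also be obtained directly from the vanishing property, the density of $\bigcup\DD^\circ$, and the non-empty interiors of the sets $D\in\DD^\circ$ by a Baire-category argument.
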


\begin{proof} 1. Given a nowhere dense subset $A\subset X$, consider the open dense subset $W=X\setminus\bar A$, and using Theorem~\ref{t:exist}, find a $\K$-tame decomposition $\DD$ of $X$ such that $\bigcup\DD^\circ$ is a dense subset of $W$. Then $\DD$ is a dense $\K$-tame decomposition and $S=X\setminus\bigcup_{D\in\DD}\Int(D)$ is a $\K$-spongy set containing the nowhere dense set $A$.
\smallskip

2. Given two $\K$-spongy sets $S$ and $S'$ in $X$, find dense $\K$-tame decompositions $\DD$ and $\DD'$ of $X$ such that $X\setminus S=\bigcup_{D\in\DD}\Int(D)$ and  $X\setminus S'=\bigcup_{D\in\DD'}\Int(D)$. By Corollary~\ref{c2.6}, the decompositions $\DD$ and $\DD'$ are topologically equivalent. Consequently, there is a $(\DD,\DD')$-factorizable homeomorphism $\Phi:X\to X$, which maps $X\setminus S$ onto $X\setminus S'$ and witnesses that the $\K$-spongy sets $S$ and $S'$ are ambiently homeomorphic.
\smallskip

3. The third statement of Theorem~\ref{main1K} follows immediately from the first two statements of this theorem.
\end{proof}

\section{Spongy sets in Hilbert cube manifolds}\label{s:spongeQ}

In this section we shall prove Theorem~\ref{t:spongeQ}. Given a spongy subset $S$ in a Hilbert cube manifold $M$, we need to prove that $S$ is a retract of $M$, homeomorphic to $M$. Let $d$ be any metric generating the topology of the space $M$.

 Let $\C$ be the family of connected components of the complement  $M\setminus S$.
Since $M$ is a spongy set, the closure $\bar C$ of each set $C\in\C$ is a tame ball in the Hilbert cube manifold $M$.
This implies that the pair $(\bar C,\partial C)$ is homeomorphic to $(\II^\w\times[0,1],\II^\w\times\{1\})$. Here by $\partial C$ we denote the boundary of $C$ in $M$.  So, we can choose a retraction $r_C:\bar C\to\partial C$ such that the preimage $r_C^{-1}(y)$ of each point $y\in \partial C$ is homeomorphic to the closed interval $\II=[0,1]$. Extend the retraction $r$ to a retraction $\bar r_C:M\to M\setminus C$ defined by $\bar r|\bar C=r_C$ and $\bar r|M\setminus C=\id$. The vanishing property of the family $\C$ guarantees that the map $r:M\to M\setminus\bigcup \C$ defined by
$$r(x)=\begin{cases}
r_C(x)&\mbox{if $x\in \bar C$ for some $C\in\C$,}\\
x&\mbox{otherwise}
\end{cases}
$$is a continuous retraction of $M$ onto the spongy set $S=M\setminus\bigcup\C$ such that the preimage of each point $y\in S$ is either a singleton or an arc. Being a retract of the Hilbert cube manifold $M$, the spongy set $S$ is a locally compact ANR.

\begin{claim} The spongy set $S$ is a Hilbert cube manifold.
\end{claim}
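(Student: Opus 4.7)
The plan is to recognize $S$ as a Hilbert cube manifold via Chapman's theorem on cell-like maps: a proper cell-like surjection from a Hilbert cube manifold onto a locally compact ANR forces the target to be itself a Hilbert cube manifold. Since it has already been noted that $S$ is a locally compact ANR, it remains to verify that the retraction $r:M\to S$ constructed just above is both cell-like and proper.

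For cell-likeness, each fiber $r^{-1}(y)$ with $y\in S$ is by construction either the singleton $\{y\}$ (when $y$ lies on no boundary $\partial C$) or the arc $r_C^{-1}(y)$ (when $y\in\partial C$ for the unique $C\in\C$ with this property, the uniqueness coming from the disjointness of the closures $\bar C$ built into the definition of a spongy set). Since singletons and arcs are contractible, hence cell-like, the map $r$ is cell-like.

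For properness, given a compact set $K\subset S$, I would combine local compactness of $M$ with the vanishing of $\C$. Local compactness provides a $\delta>0$ for which $\overline{O_d(K,2\delta)}$ is compact. Applying vanishing of $\C$ to the cover of $M$ by open $\delta$-balls, the subfamily $\C'=\{C\in\C:\diam(\bar C)>2\delta\}$ is locally finite in $M$, and a finite-subcover argument on $K$ shows that only finitely many $C_1,\dots,C_n\in\C'$ satisfy $\bar C_i\cap K\ne\emptyset$. The remaining $C\in\C$ with $\partial C\cap K\ne\emptyset$ satisfy $\diam(\bar C)\le 2\delta$, so $\bar C\subset\overline{O_d(K,2\delta)}$. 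Hence $r^{-1}(K)\subset\overline{O_d(K,2\delta)}\cup\bar C_1\cup\dots\cup\bar C_n$ sits inside a compact set and, being closed in $M$, is itself compact. With $r$ established as a proper cell-like surjection from the Hilbert cube manifold $M$ onto the locally compact ANR $S$, Chapman's theorem delivers the conclusion that $S$ is a Hilbert cube manifold. The main conceptual point is merely to assemble the hypotheses of the cell-like map theorem correctly; both cell-likeness (read off from the explicit fibers) and properness (controlled by the vanishing of $\C$) rely essentially on the defining conditions of a spongy set.
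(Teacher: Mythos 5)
Your proof is essentially correct, but it follows a genuinely different route from the paper. The paper proves the claim by verifying Toru\'nczyk's characterization directly: it approximates an arbitrary map $f:\II^\w\times\{0,1\}\to S$ by maps with disjoint images, pushing the images off the finitely many ``large'' components via $Z$-set approximation in the boundaries $\partial C$ (Claim 10.2) and then composing with the retraction $r$; the cell-likeness of $r$ is only used \emph{after} the claim, together with Chapman's Corollary 43.2, to upgrade ``$S$ is a Hilbert cube manifold'' to ``$S\cong M$''. You instead treat the CE-image theorem as a black box: $r:M\to S$ is a proper cell-like surjection onto a locally compact ANR, hence $S$ is a $Q$-manifold. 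This is legitimate and shorter, and your verification of properness via the vanishing property is correct in substance (one small slip: from $\diam(\bar C)\le 2\delta$ and $\bar C\cap K\ne\emptyset$ you only get $\bar C\subset\{x:d(x,K)\le 2\delta\}$, which need not lie in $\overline{O_d(K,2\delta)}$ in a general metric space; fix it by choosing the compact closure at radius $3\delta$, or by taking $\C'=\{C:\diam(\bar C)>\delta\}$). The one point you must repair is the attribution: the theorem you invoke is \emph{not} Chapman's. Chapman's Corollary 43.2 requires both the source and the target to be Hilbert cube manifolds, so citing it here would be circular; the statement that a proper cell-like image of a $Q$-manifold which is a locally compact ANR is again a $Q$-manifold is Toru\'nczyk's CE-image theorem, i.e.\ the main result of \cite{Tor80}, which the paper already cites. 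With that citation corrected, your argument stands, at the cost of invoking a stronger black box where the paper carries out the disjoint-cubes verification by hand.
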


\begin{proof} According to the characterization theorem of Toru\'nczyk \cite{Tor80}, it suffices to show that for each $\epsilon>0$ and a continuous map $f:\II^\w\times\{0,1\}\to S$ there is a continuous map $\tilde f:\II^\w\times \{0,1\}\to X$ such that $d(\tilde f,f)<\e$ and  $\tilde f(\II^\w\times\{0\})\cap \tilde f(\II^\w\times\{1\})=\emptyset$.

Since $M$ is an $\II^\w$-manifold, by Theorem 18.2 of \cite{Chap}, the map $f:\II^\w\times \{0,1\}\to S\subset M$ can be approximated by a map $g:\II^\w\times \{0,1\}\to M$ such that
$d(g,f)<\frac12\epsilon$ and $g(\II^\w\times\{0\})\cap g(\II^\w\times\{1\})=\emptyset$. Fix a positive real number $\delta<\epsilon$ such that $$\delta\le \dist\big(g(\II^\w\times\{0\}),g(\II^\w\times\{1\})\big)=\inf\big\{d(x,y):x\in g(\II^\w\times\{0\}),\;\;y\in g(\II^\w\times\{1\})\big\}.$$

The vanishing property of the family $\C$ guarantees that the subfamily $\C'=\{C\in\C:\diam(C)\ge\delta/5\}$ is discrete in $M$. By the collectivewise normality of $M$, for each set $C\in\C'$ its closure $\bar C$ has an open neighborhood $O(\bar C)\subset M$ such that the indexed family $\big(O(\bar C)\big)_{C\in\C'}$ is discrete in $X$. Since for each set $C\in\C'$ the closure $\bar C$ is a tame ball in $M$, we can additionally assume that the pair $(O(\bar C),\bar C)$ is homeomorphic to the pair $\big(\II^\w\times[0,2),\II^\w\times[0,1]\big)$.

\begin{claim}\label{cl10.2} For every $C\in\C'$ there is a map $g_C:\II^\w\times\{0,1\}\to M\setminus C$ such that
\begin{enumerate}
\item $d(g_C,\bar r_C\circ g)<\delta/5$;
\item $g_C|g^{-1}(M\setminus O(\bar C))= g|g^{-1}(M\setminus O(\bar C))$;
\item $g_C(g^{-1}(\bar C))\subset\partial C$;
\item $g_C(g^{-1}(O(\bar C)))\subset O(\bar C)$, and
\item $g_C(\II^\w\times\{0\})\cap g_C(\II^\w\times\{1\})=\emptyset$.
\end{enumerate}
\end{claim}

\begin{proof} Choose an open neighborhood $U(\bar C)$ of $\bar C$ in $M$ such that $\bar U(\bar C)\subset O(\bar C)$. Consider the closed subset $F_C=g^{-1}(\bar C)\subset\II^\w\times\{0,1\}$, and its open neighborhoods $O(F_C)=g^{-1}(O(\bar C))$ and $U(F_C)=g^{-1}(U(\bar C))$.
It follows from $\bar U(\bar C)\subset O(\bar C)$ that $\bar U(F_C)\subset O(F_C)$.

Next, consider the map $\bar r_C\circ g|O(F_C):O(F_C)\to O(\bar C)\setminus C$.
Since $O(\bar C)\setminus C$ is an absolute retract (homeomorphic to $\II^\w\times[1,2)$), by Theorems 5.1.1 and 5.1.2 of \cite{Hu}, there is an open cover $\U_C$ of $O(\bar C)\setminus C$ such that any map $g':F_C\to O(\bar C)\setminus C$ with $(g',\bar r_C\circ g|F_C)\prec\U_C$ can be extended to a map $g'_C:O(F_C)\to  O(\bar C)\setminus C$ such that $g'_C|O(F_C)\setminus U(F_C)=g|O(F_C)\setminus (F_C)$ and $d(g'_C,g|O(C))<\epsilon/5$.

Since the boundary $\partial C$ of the tame ball $\bar C$ in $M$ is homeomorphic to the Hilbert cube $\II^\w$, by Theorem~8.1 of \cite{Chap}, the map $\bar r\circ g|F_C\to\partial C$ can be approximated by an injective map $g':F_C\to\partial C$ such that $(g',g|F_C)\prec\U_C$. By the choice of the cover $\U_C$ the map $g'$ can be extended to a continuous map $g'_C:O(F_C)\to O(\bar C)\setminus C$ such that $g'_C|O(F_C)\setminus U(F_C)=g|O(F_C)\setminus U(F_C)$ and $d(g'_C,g|O(F_C))<\delta/5$.

Extend the map $g'_C$ to a continuous map $g_C:\II^\w\times\{0,1\}\to M\setminus C$ such that
$$g_C(x)=\begin{cases}
g'_C(x)&\mbox{if $x\in O(C)$}\\
g(x)&\mbox{otherwise}.
\end{cases}
$$
It is easy to see that the map $g_C$ satisfies the conditions (1)--(5).
\end{proof}

Now define a map $\tilde g:\II^\w\times\{0,1\}\to M'$ by the formula
$$\tilde g(x)=\begin{cases}
g_C(x)&\mbox{if $x\in g^{-1}(O(\bar C))$ for some $C\in\C'$};\\
g(x)&\mbox{otherwise}.
\end{cases}
$$
Claim~\ref{cl10.2} implies that $d(\tilde g,g)<\delta/5$ and $\tilde g(\II^\w\times\{0\})\cap \tilde g(\II^\w\times\{1\})=\emptyset$.
Finally, put $\tilde f=r\circ \tilde g:\II^\w\times\{0,1\}\to S$.

The choice of the family $\C'$ guarantees that $d(\tilde f,\tilde g)<\delta/5$ and hence $d(\tilde f,g)<\frac25\delta$ and $d(\tilde f,f)\le d(\tilde f,g)+d(g,f)<\frac25\delta+\frac12\epsilon<\epsilon$. The choice of $\delta\le\dist\big(g(\II^\w{\times}\{0\}),g(\II^\w{\times}\{0\})\big)$ guarantees that
$$\dist\big(\tilde f(\II^\w{\times}\{0\}),\tilde f(\II^\w{\times}\{0\})\big)\ge\delta-2d(\tilde f,g)\ge\frac15\delta>0$$and thus $\tilde f(\II^\w{\times}\{0\})\cap\tilde f(\II^\w{\times}\{1\})=\emptyset$. By the characterization theorem of Toru\'nczyk \cite{Tor80}, the space $S$ is an $\II^\w$-manifold.
\end{proof}

Since for each point $y\in S$ the preimage $r^{-1}(y)$ is either a singleton or an arc, the retraction $r:M\to S$ is a cell-like surjective map between Hilbert cube manifolds $M$ and $S$. By Corollary 43.2 of \cite{Chap}, the map $r$ is a near homeomorphism. So, the Hilbert cube manifolds $M$ and $S$ are homeomorphic.

\section{The family of tame balls in a manifold is tame}

In this section we shall show that the family $\K$ of tame balls in an $\II^n$-manifold $X$ is tame and each vanishing decomposition $\DD\subset\K\cup\big\{\{x\}:x\in X\big\}$ of $X$ is $\K$-tame.

\begin{theorem}\label{t:tame} Let $n\in\IN\cup\{\w\}$ and $X$ be an $\II^n$-manifold. Then:
\begin{enumerate}
\item The family $\K$ of tame balls in $X$ is tame.
\item Each vanishing decomposition $\DD\subset\K\cup\big\{\{x\}:x\in X\big\}$ of $X$ is strongly shrinkable and hence is $\K$-tame.
\end{enumerate}
\end{theorem}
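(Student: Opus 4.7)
The plan is to prove the two parts in order. For part (1), I would verify the four clauses of Definition~\ref{d:K-tame} directly. Ambient invariance of $\K$ is immediate: if $h:X\to X$ is a homeomorphism and $(O(B),B)$ realizes the model pair, then $(h(O(B)),h(B))$ realizes the same model. That each non-empty open $U\subset X$ contains a tame ball reduces via a chart to finding a tame cube inside an open subset of $\II^n$ or $\II^\w\times[0,2)$, which is done by a slight shrinking of the model cube. For local shrinkability of a tame ball $B$, I would identify $(O(B),B)$ with the model pair; on the model, one writes down an explicit ambient isotopy of $\IR^n$ (respectively $\II^\w\times[0,2)$) supported in a prescribed neighborhood of $\II^n$ (respectively $\II^\w\times[0,1]$) that contracts the cube toward a chosen interior point so that its image lies in any prescribed $V\in\V$, and then extends by the identity off $O(B)$.

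For the local shift property, given $x\in X$ and $O_x\ni x$ I would choose $U_x\subset O_x$ to be the image of a small model ball under a chart at $x$. Two tame balls $A,B\subset U_x$ can then be compared inside the chart: by ambient invariance and local shrinkability I can first shrink both to tame balls lying in a fixed small model ball, and then invoke the standard ambient homeomorphism extension for tame cubes — in finite dimensions using flatness and the isotopy-extension machinery for locally flat $\II^n$'s in $\IR^n$ (with collars giving the compactly supported ambient equivalence), and in the Hilbert-cube case using the $Z$-set unknotting theorem together with the product structure of $\II^\w$-manifolds. This yields a self-homeomorphism of $X$ taking $A$ to $B$ and equal to the identity off $O_x$.

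For part (2), let $\DD\subset\K\cup\{\{x\}:x\in X\}$ be a vanishing decomposition. Lemma~\ref{l2} immediately gives that $\DD$ is upper semicontinuous. To prove strong shrinkability, I fix a $\DD$-saturated open $U\subset X$, a $\DD$-saturated open cover $\U$ of $U$, and an open cover $\V$ of $U$; I must produce a homeomorphism $h:U\to U$ with $(h,\id)\prec\U$ shrinking the non-trivial elements of $\DD|U$ to be $\V$-small. The vanishing hypothesis says that the subfamily $\DD'=\{D\in\DD|U:\forall V\in\V,\;D\not\subset V\}$ is locally finite, hence discrete since the elements of $\DD$ are pairwise disjoint compacta. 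Using collectionwise normality of $X$ and the saturatedness of $\U$, I choose a discrete family of open neighborhoods $(O(D))_{D\in\DD'}$ with each $O(D)\subset\St(D,\U)$ contained in a single member of $\U$. Since every $D\in\DD'$ lies in $\K$, part (1) yields, for each $D\in\DD'$, a homeomorphism $h_D:X\to X$ supported in $O(D)$ and carrying $D$ into some $V_D\in\V$. The discreteness of the supports allows me to paste these into a single homeomorphism $h$ of $X$ (equal to $h_D$ on $O(D)$ and the identity elsewhere), which then satisfies $(h,\id_U)\prec\U$ and sends each non-singleton element of $\DD|U$ into some $V\in\V$. The singleton elements of $\DD|U$ are automatically $\V$-small. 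Combining this with $\DD^\circ\subset\K$, the decomposition satisfies Definition~\ref{d:d-Ktame} and is $\K$-tame.

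The main obstacle is the local shift property in part (1): the assertion that in a small chart any two tame balls are ambiently equivalent via a compactly supported self-homeomorphism is where genuine topological input is needed, requiring either isotopy-extension for locally flat cubes in Euclidean space (finite-dimensional case) or Anderson's $Z$-set unknotting theorem together with Chapman's structural results on $\II^\w$-manifolds (infinite-dimensional case). The shrinking arguments and the discrete paste-together in part (2) are then formal once the local shift property and local shrinkability are in hand.
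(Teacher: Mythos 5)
Your part (2) is where the proposal breaks down. Shrinkability demands a \emph{single} homeomorphism $h$ with $(h,\id)\prec\U$ such that the image of \emph{every} element of $\DD|U$ is $\V$-small, and your one-step construction only controls the elements of $\DD'$: each $h_D$ is supported in $O(D)$ and compresses $D$, but in doing so it will in general stretch the other nondegenerate elements $D''\subset O(D)$ (which were $\V$-small before, but need not have $\V$-small images $h_D(D'')$, since $O(D)$ is only $\U$-small). Local shrinkability of each individual tame ball gives no control over what the shrinking homeomorphism does to the \emph{other} elements of the decomposition, and this is precisely the non-formal content of shrinking theory: if your pasting argument were valid, every vanishing decomposition into locally shrinkable (e.g.\ cellular) compacta would be shrinkable, which is false --- there are vanishing families of wild cellular sets whose decompositions do not shrink. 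The paper does not attempt a direct argument here: in the finite-dimensional case it first observes that tame balls miss $\partial X$, so $\DD$ restricts to a vanishing decomposition of the $\IR^n$-manifold $X\setminus\partial X$, and then quotes Daverman's Theorem~8.7 on vanishing decompositions into flat cells; in the Hilbert cube case it quotes Chapman's Corollary~43.2 (cell-like maps of $\II^\w$-manifolds are near homeomorphisms) together with \v Cerin's Theorem~5.3 (or Toru\'nczyk's theorem) to see that $X/\DD$ is again an $\II^\w$-manifold, and then applies the Shrinkability Criterion. Any honest direct proof would have to be an iterative shrinking process with convergence control, using the flatness of the balls essentially, not a single discrete paste.

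There is also an under-justified step in part (1): the local shift property in the finite-dimensional case. Producing a homeomorphism of $X$ that carries one tame ball onto another and is the identity off $O_x$ is, after shrinking one cell into the interior of the other, equivalent to knowing that the region between the two flat boundary spheres is homeomorphic to $S^{n-1}\times[0,1]$; generalized Schoenflies plus collars only give an ambient homeomorphism of $S^n$ without compact support, and your appeal to isotopy extension presupposes an isotopy of locally flat embeddings joining the two cells, whose existence is again the same issue. This is the Annulus (equivalently, Stable Homeomorphism) Theorem, which in dimension $4$ requires Quinn's work; the paper cites Rad\'o, Moise, Quinn--Edwards and Kirby exactly at this point. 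Your infinite-dimensional argument (Z-set unknotting and Chapman's structure theory) does match the paper, as do ambient invariance, local shrinkability of tame balls, and the existence of tame balls in every non-empty open set.
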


\begin{proof} (1) The definition of a tame ball implies that the family $\K$ is ambiently invariant.
If $X$ is a finite dimensional manifold, then the local shift property of $\K$ follows from the Annulus Conjecture proved in \cite{Rado}, \cite{Moise}, \cite{Quinn,Edwards}, \cite{Kirby} for dimensions 2, 3, 4, and $\ge 5$, respectively. If $X$ is a Hilbert cube manifold, then the local shift property follows from Theorem 11.1 of \cite{Chap} on extensions of homeomorphisms between $Z$-sets of the Hilbert cube.

The strong shrinkability of tame balls in finite dimensional manifolds was proved in Proposition 6.2 \cite{Dav}. The strong shrinkability of tame balls in Hilbert cube manifolds follows from Theorem 2.4 of \cite{Cerin} and Corollary 43.2 of \cite{Chap}. The fact that each non-empty open subset of the manifold $X$ contains a tame ball is trivial if $X$ is finite dimensional and follows from 12.1 \cite{Chap} if $X$ is a Hilbert cube manifold.
\smallskip

(2) Let $\DD\subset\K\cup\big\{\{x\}:x\in X\big\}$ be a vanishing decomposition of the manifold $X$ into singletons and tame balls. If $X$ is finite dimensional, then each tame ball $D\in\DD^\circ$ has a neighborhood homeomorphic to $\IR^n$ and hence $D$ does not intersect the boundary $\partial X$ of the manifold $X$. Then $\DD$ is a vanishing decomposition of the $\IR^n$-manifold $M=X\setminus \partial X$. By Theorem~8.7 of \cite{Dav}, it is strongly shrinkable. If $X$ is a Hilbert cube manifold, then the strong shrinkability of the decomposition $\DD$ follows from Corollary 43.2 \cite{Chap} (saying that each cell-like map between Hilbert cube manifolds is a near homeomorphism), and Theorem 5.3 of \cite{Cerin} implying the decomposition space $X/\DD$ is a Hilbert cube manifold. The latter fact can be alternatively deduced from Theorem~\ref{t:spongeQ} and Toru\'nczyk's Theorem 3' (saying that for a decomposition $\DD$ of an $\II^\w$-manifold $M$ the decomposition space $M/\DD$ is an $\II^\w$-manifold provided the union $\bigcup\DD^\circ$ is contained in a countable union of $Z$-sets in $M$).
\end{proof}

\section{Proof of Theorem~\ref{main1}}\label{s:main1}

Given an $\II^n$-manifold $X$ we need to prove the following statements:
\begin{enumerate}
\item Each nowhere dense subset of $X$ lies in a spongy subset of $X$;
\item Any two spongy subsets of $X$ are ambiently homeomorphic, and
\item Any spongy subset of $X$ is a universal nowhere dense subset in $X$;
\end{enumerate}
By Theorem~\ref{t:tame}, a subset $S\subset X$ is spongy if and only if $S$ is $\K$-spongy for the family $\K$ of tame balls in $X$. If $X$ is a Hilbert cube manifold, then $X$ is a strongly locally homogeneous completely metrizable space and the statements (1)--(3) follow immediately from Theorem~\ref{main1K}.

The same argument works if $X$ is an $\IR^n$-manifold for a finite $n$. It remains to consider the case of an $\II^n$-manifold $X$ that has non-empty boundary $\partial X$ (which consists of points $x\in X$ that do not have open neighborhoods homeomorphic to $\IR^n$).
It follows that $M=X\setminus\partial X$ is an $\IR^n$-manifold. Theorem~\ref{t:tame} guarantees that the family $\K(M)$ of tame balls in $M$ is tame.

By Theorem~\ref{t:exist}, each nowhere dense subset of $X$ is contained in a $\K$-spongy subset of $X$, so the statement (1) holds for the $\II^n$-manifold $X$.

To prove the statement (2), fix any two spongy subsets $S$ and $S'$ in $X$. Denote by $\C$ and $\C'$ the families of connected components of the complements $X\setminus S$ and $X\setminus S'$.
By the definition of a spongy set, for each component $C\in\C$ its closure $\bar C$ is a tame ball in $X$ and hence $\bar C$ has an open neighborhood homeomorphic to $\IR^n$. Then $\bar C\cap\partial X=\emptyset$ and hence $\bar C\subset M$. Now consider the dense decompositions
$$
\begin{aligned}
\A&=\{\bar C:C\in\C\}\cup\big\{\{x\}:x\in M\setminus\bigcup_{C\in\C}\bar C\big\} \mbox{ and }\\
\BB&=\{\bar C:C\in\C'\}\cup\big\{\{x\}:x\in M\setminus\bigcup_{C\in\C'}\bar C\big\}
\end{aligned}
$$of the $\IR^n$-manifold $M$. The vanishing property of the families $\C$ and $\C'$ implies that the decompositions $\A$ and $\BB$ of the $\IR^n$-manifold $M$ are vanishing and hence $\K(M)$-tame according to Theorem~\ref{t:tame}.

Fix any metric $d$ generating the topology of the manifold $X$ and by the paracompactness of $X$, find an open cover $\U$ of $X$ such that $\St(x,\U)\subset O_d(x,d(x,\partial X)/2)$ for each point $x\in M$.
By (the proof of ) Corollary~\ref{c2.6}, there is a homeomorphism $\Phi:M\to M$ such that $\{\Phi(A):A\in\A\}=\BB$ and for each point $x\in M$ there are sets $A\in\A$ and $B\in\BB$ such that $x\in\St(A,\U)$, $\Phi(x)\in\St(B,\U)$ and $\St(A,\U)\cap\St(B,\U)\ne\emptyset$.

Extend the homeomorphism $\Phi:M\to M$ to a bijective map $\bar \Phi:X\to X$ such that $\bar\Phi|M=\Phi$ and $\bar\Phi|\partial X=\id$. We claim that the functions $\bar\Phi$ and $\bar\Phi^{-1}$ are continuous. It is necessary to check the continuity of these functions at each point $x_0\in\partial X$. First we verify the continuity of the function $\bar\Phi$ at $x_0$.
Given any $\epsilon>0$ we need to find $\delta>0$ such that $\bar\Phi(O_d(x_0,\delta))\subset O_d(x_0,\e)$.

Repeating the proof of Claim~\ref{cl:ed}, for the number $\epsilon$, we can find a positive real number $\eta\le\epsilon$ such that for each set $B\in \BB$ with $\St(B,\U)\cap O_d(x_0,\eta)\ne\emptyset$, we get $\St(B,\U)\subset O_d(x_0,\epsilon)$. Next, by the same argument, for the number $\eta$ choose a positive real number $\delta\le\eta$ such that for each set $A\in \A$ with $\St(A,\U)\cap O_d(x_0,\delta)\ne\emptyset$, we get $\St(A,\U)\subset O_d(x_0,\eta)$.

We claim for each point $x\in X$ with $d(x,x_0)<\delta$, we get $d(\bar\Phi(x),x_0)<\e$. This inequality trivially holds if $x\in\partial X$. So, we assume that $x\in M$.
By the choice of the homeomorphism $\Phi$, there are sets $A\in\A$ and $B\in\BB$ such that $x\in\St(A,\U)$, $\Phi(x)\in\St(B,\U)$, and the intersection $\St(A,\U)\cap\St(B,\U)$ contains some point $y\in X$. Taking into account that the set $\St(A,\U)$ meets the ball $O_d(x_0,\delta)\ni x$, we conclude that $y\in\St(A,\U)\subset O_d(x_0,\eta)$. Since the set $\St(B,\U)\ni y$ meets the ball $O_d(\eta)$, the choice of the number  $\eta$ guarantees that $\bar\Phi(x)=\Phi(x)\in\St(B,\U)\subset O_d(x_0,\epsilon)$.
This means that the map $\bar\Phi$ is continuous.

By analogy we can show that the inverse map $\bar\Phi^{-1}:X\to X$ is continuous too.
So, $\bar\Phi:X\to X$ a homeomorphism of $X$ such that $\Phi(\bigcup_{C\in\C'}\bar C)=\Phi(\bigcup\A^\circ)=\bigcup\BB^\circ=\bigcup_{C\in\C'}\bar C$.
This implies that $\Phi(\bigcup\C)=\bigcup\C'$ and $\Phi(S)=\Phi(X\setminus \bigcup\C)=X\setminus\bigcup\C'=S'$, witnessing that the spongy sets $S$ and $S'$ are ambiently homeomorphic in $X$. This completes the proof of the statement (2) of Theorem~\ref{main1}.

The statement (3) follows immediately from the statements (1) and (2).

\section{Topological equivalence of cellular decompositions of Hilbert cube manifolds}\label{s:cellular}

In this section we shall apply Theorem~\ref{main2} to prove topological equivalence of certain cellular decompositions of Hilbert cube manifolds. But first we shall study the structure of tame families of compact subsets in more general topological spaces.

The following proposition shows that for strongly locally homogeneous completely metrizable spaces the definition~\ref{d:K-tame} of a tame family can be  a bit simplified.

\begin{proposition}\label{p4.1} Let $X$ be a strongly locally homogeneous completely metrizable space and $\K$ be an ambiently invariant family of locally shrinkable compact subsets of $X$, possessing the local shift property. Then the following conditions are equivalent:
\begin{enumerate}
\item $\bigcup\K=X$;
\item $\bigcup\K$ is dense in $X$;
\item each non-empty open set $U\subset X$ contains a set $K\in\K$, and
\item for each point $x\in X$ and each open neighborhood $U\subset X$ of $x$ there is a set $K\in\K$ such that $x\in K\subset U$.
\end{enumerate}
\end{proposition}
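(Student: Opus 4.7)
The implications $(4) \Rightarrow (3) \Rightarrow (2)$ and $(4) \Rightarrow (1) \Rightarrow (2)$ are immediate. I will close the cycle by establishing $(3) \Rightarrow (4)$, $(2) \Rightarrow (1)$, and finally the delicate step $(1) \Rightarrow (3)$.

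For $(3) \Rightarrow (4)$, given $x \in U$, apply strong local homogeneity at $x$ with $O_x = U$ to obtain a neighborhood $U_x \subset U$ of $x$ such that every $y \in U_x$ satisfies $y = h_y(x)$ for some homeomorphism $h_y : X \to X$ with $h_y|X\setminus U = \id$. By (3), pick $K \in \K$ with $K \subset U_x$ and any $y \in K$. Then $h_y^{-1}(K)$ lies in $\K$ by ambient invariance, contains $h_y^{-1}(y) = x$, and is contained in $U$ because $h_y$ fixes $X\setminus U$ and therefore stabilizes $U$ setwise. For $(2) \Rightarrow (1)$, the strong local homogeneity implies that every orbit of the self-homeomorphism group of $X$ is open: the SLH neighborhood of any point of the orbit lies entirely in that orbit. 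Being disjoint and covering $X$, the orbits are therefore clopen, and each non-empty clopen set meets the dense union $\bigcup\K$. Thus every orbit contains a point $y \in K$ for some $K \in \K$; applying ambient invariance to a homeomorphism $\phi$ with $\phi(y) = x$ yields $x \in \phi(K) \in \K$, so $\bigcup\K = X$.

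The subtle implication is $(1) \Rightarrow (3)$. Given a non-empty open $U$ and a point $x \in U$, I use (1) to fix some $K_0 \in \K$ with $x \in K_0$. First I apply SLH at $x$ with $O_x = U$ to obtain $U_x \subset U$, and then the local shift property at $x$ with $O_x = U_x$ to obtain a neighborhood $W_x \subset U_x$ of $x$ such that any two members of $\K$ lying in $W_x$ are swapped by a homeomorphism fixing $X \setminus U_x$. Next I apply local shrinkability to $K_0$ with a carefully designed open cover $\V$ of a neighborhood $O(K_0) \supset K_0$: the cover consists of one distinguished element $V_0$ equal to a small open neighborhood of $x$ lying inside $W_x$, together with small open balls covering the remainder of $O(K_0)$ but each disjoint from a fixed closed ball around $x$. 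Local shrinkability produces $h : X \to X$ with $h(K_0) \subset V'$ for some $V' \in \V$. If $V' = V_0$, then $h(K_0) \in \K$ (by ambient invariance) is contained in $W_x \subset U$ and (3) is established. Otherwise $h(K_0)$ is a small member of $\K$ sitting in an element $V'$ disjoint from a neighborhood of $x$, and one must transport it back into $W_x$ by combining ambient invariance, clopenness of orbits from the preceding paragraph, and a further shrinking step followed by an application of the local shift property once a small representative has been localized into a common small neighborhood with a point of $W_x$.

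I expect the main obstacle to be the alternative branch of the shrinkability step, since the definition of local shrinkability asserts only the existence of \emph{some} $V \in \V$ containing the shrunken image, not a preferred one. The key technical ingredient needed is a lemma stating that, after sufficient shrinking, any small member of $\K$ lying in the (clopen) orbit of $x$ can be moved by a self-homeomorphism of $X$ into any preassigned open neighborhood of $x$ while remaining in $\K$; this should follow by iterated use of the local shift property along a chain of overlapping small neighborhoods connecting the small $\K$-set to $W_x$, together with ambient invariance of $\K$.
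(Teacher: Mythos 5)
Your easy implications, the orbit argument for $(2)\Rightarrow(1)$, and your proof of $(3)\Rightarrow(4)$ are all correct; the last of these is in fact a nice shortcut (pulling a set $K\subset U_x$ back over $x$ by a homeomorphism supported in $U$ uses only strong local homogeneity and ambient invariance, with no shrinking). But the cycle is not closed: the implication $(1)\Rightarrow(3)$, which carries exactly the same difficulty as the hard direction $(2)\Rightarrow(4)$ proved in the paper, is not actually established. In the branch where the shrunken copy $h(K_0)$ lands in an element $V'\ne V_0$ of your cover, you only sketch a transport of $h(K_0)$ into $W_x$ by ``iterated use of the local shift property along a chain of overlapping small neighborhoods,'' and you yourself flag this as the missing ingredient. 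That sketch is circular: the local shift property only exchanges two members of $\K$ that already lie in a common small neighborhood, so each link of the chain requires a member of $\K$ sitting inside that small neighborhood --- precisely the local abundance of $\K$-sets asserted by $(3)$/$(4)$ that you are trying to prove. The alternative of moving a point of $h(K_0)$ back to $x$ via topological homogeneity (orbits are indeed clopen) gives no control on the size or position of the image of the set, so you merely return to a possibly large member of $\K$ through $x$; repeating the shrinking step lets the set escape again anywhere inside a neighborhood of the current set, and nothing forces the process to terminate inside $U$. Note also that local shrinkability never lets you prescribe \emph{which} element of the cover receives $h(K_0)$, nor does it keep $x$ inside $h(K_0)$, so the ``lucky branch'' $V'=V_0$ cannot be forced by any choice of cover.

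The paper overcomes this obstacle by a genuinely different device, which your proposal lacks: since $K$ is locally shrinkable, the Shrinkability Criterion (Theorem~\ref{t:shrink}) makes the quotient map $q_K:X\to X/K$ a strong near homeomorphism, so $X/K$ is homeomorphic to $X$ and hence strongly locally homogeneous; the set $K$ becomes a single point $y=q_K(K)$, and strong local homogeneity can be applied directly at $y$. Concretely, one takes the clopen orbit of $y$ in $X/K$, a homeomorphism $h_1:X\to X/K$ agreeing with $q_K$ off the preimage of that orbit, a homeomorphism of $X/K$ carrying $h_1(x)$ to $y$, an SLH neighborhood $V_y$ of $y$ inside the transported copy of $U_x$, and a further homeomorphism $h_3:X\to X/K$ with $h_3(K)\subset V_y$; composing these yields a self-homeomorphism of $X$ that fixes $x$ and pushes $K$ into $U_x$, so ambient invariance finishes the proof. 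Some mechanism of this kind --- shrinking $K$ while simultaneously retaining control of the point $x$, here achieved by collapsing $K$ to a point and exploiting the homogeneity of the quotient --- is exactly what the ``otherwise'' branch of your argument needs and does not supply.
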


\begin{proof} It is clear that $(4)\Ra(3)\Ra(2)\Leftarrow (1)\Leftarrow(4)$. So, it remains to prove the implication $(2)\Ra(4)$. Given a point $x\in X$ and an open neighborhood $U_x\subset X$ of $x$, consider the orbit $O_x=\{h(x):h\in\HH(X)\}$ of $x$ under the action of the homeomorphism group $\HH(X)$ of $X$. The strong local homogeneity of $X$ implies that this orbit is open-and-closed in $X$. Since the union $\bigcup\K$ of the family $\K$ is dense in $X$, there exists a set $K'\in\K$ that intersects the orbit $O_x$. So, there exists a homeomorphism $f:X\to X$ such that $f(x)\in K'$. Then the compact set $K=f^{-1}(K')$ contains the point $x$ and belongs to the family $\K$ (by the ambient invariance of $\K$).

Since the set $K\in\K$ is locally shrinkable, the quotient map $q_K:X\to X/K$ is a strong near homeomorphism by Theorem~\ref{t:shrink}, which implies that the space $X/K$ is homeomorphic to $X$ and hence is strongly locally homogeneous. Then for the point $y=q_K(K)\in X/K$ its orbit $O_y$ under the action of the homeomorphism group $\HH(X/K)$ is closed-and-open in the quotient space $X/K$.
Then $W=q_\DD^{-1}(O_y)$ is a closed-and-open neighborhood of $K$ in $X$.
Since the quotient map $q_K:X\to X/K$ is a strong near homeomorphism, there is a homeomorphism $h_1:X\to X/K$ such that $h_1|X\setminus W=q_K|X\setminus W$ and hence $h_1(W)=O_y$. Since $h_1(x)\in O_y$, there is a homeomorphism $h_2:X/K\to X/K$ such that $h_2(h_1(x))=y$.

Since the space $X/K$ is strongly locally homogeneous, for the neighborhood $U_y=h_2\circ h_1(U_x)\cap O_y$ of the point $y=q_K(K)$ there is a neighborhood $V_y\subset U_y$ such that for any point $z\in V_y$ there is a homeomorphism $h:X/K\to X/K$ such that $h(z)=y$ and $h(U_y)=U_y$.

Since $q_K$ is a strong near homeomorphism, for the neighborhood $V_y$ of the point $y=q_K(K)$ there is a homeomorphism $h_3:X\to X/K$ such that $h_3(K)\subset V_y$. By the choice of $V_y$ for the point $z=h_3(x)\in h_3(K)\subset V_y$ there is a homeomorphism $h_4:X/K\to X/K$ such that $h_4(z)=y$ and $h_4(U_y)=U_y$. Then the homeomorphism $h=h_1^{-1}\circ  h_2^{-1}\circ h_4\circ h_3:X\to X$ has the properties:
$$h(x)=h_1^{-1}\circ  h_2^{-1}\circ h_4\circ h_3(x)=h_1^{-1}\circ  h_2^{-1}\circ h_4(z)=h_1^{-1}\circ  h_2^{-1}(y)=h_1^{-1}(h_1(x))=x$$and
$$h(K)=h_1^{-1}\circ  h_2^{-1}\circ h_4\circ h_3(K)\subset h_1^{-1}\circ  h_2^{-1}\circ h_4(U_y)=
 h_1^{-1}\circ  h_2^{-1}(U_y)=U_x.$$
Since the family $\K$ is ambiently invariant, the compact set $h(K)$ belongs to the tame family $\K$ and has the required properties: $x=h(x)\in h(K)\subset U_x$.
\end{proof}

\begin{proposition}\label{p4.2} If $\K$ is an ambiently invariant  family of locally shrinkable compact subsets of a topologically homogeneous completely metrizable space $X$ and $\K$ has the local shift property, then any two sets $A,B\in\K$ are ambiently homeomorphic.
\end{proposition}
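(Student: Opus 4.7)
The plan is to reduce the problem to the local shift property by pushing every element of $\K$ into a fixed local shift neighborhood. Fix a point $p\in X$ and apply the local shift property with $O_p=X$ to obtain a neighborhood $V_p\subset X$ of $p$ such that any two sets of $\K$ contained in $V_p$ are ambiently homeomorphic. The entire proof will then follow from the key claim that for every $A\in\K$ there exists a homeomorphism $\Phi_A:X\to X$ with $\Phi_A(A)\subset V_p$. Indeed, applying the claim to both $A$ and $B$ produces sets $\Phi_A(A),\Phi_B(B)\in\K$ (by ambient invariance) that both lie in $V_p$; the local shift property then yields $\psi:X\to X$ with $\psi(\Phi_A(A))=\Phi_B(B)$, and the composition $\Phi_B^{-1}\circ\psi\circ\Phi_A$ carries $A$ onto $B$.

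To prove the key claim I would use the Shrinkability Criterion (Theorem~\ref{t:shrink}) to turn local shrinkability into an approximation statement. The decomposition $\DD_A=\{A\}\cup\{\{x\}:x\in X\setminus A\}$ is trivially upper semicontinuous, and strongly shrinkable because $A$ is locally shrinkable; hence the quotient map $q_A:X\to X/A$ is a near homeomorphism. In particular $X/A$ is homeomorphic to $X$, and therefore topologically homogeneous, so one can choose a homeomorphism $\tau:X/A\to X$ sending the distinguished point $\{A\}$ to $p$: take any homeomorphism $X/A\to X$ and post-compose with a self-homeomorphism of $X$ provided by topological homogeneity that moves the image of $\{A\}$ to $p$. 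A routine verification shows that $f:=\tau\circ q_A:X\to X$, being a near homeomorphism post-composed with a homeomorphism, is itself a near homeomorphism, and by construction $f(A)=\{p\}$.

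Now choose an open set $W\subset X$ with $p\in W$ and $\bar W\subset V_p$, available by regularity of the metrizable space $X$, and consider the open cover $\U=\{V_p,\,X\setminus\bar W\}$ of $X$. The near-homeomorphism property of $f$ supplies a homeomorphism $\Phi_A:X\to X$ with $(\Phi_A,f)\prec\U$. For every $z\in A$ we have $f(z)=p\in\bar W$, so either $\Phi_A(z)=p\in V_p$, or else $\{\Phi_A(z),p\}$ is contained in some single element of $\U$ which must be $V_p$ (since $p\notin X\setminus\bar W$); in either case $\Phi_A(z)\in V_p$, establishing the key claim.

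The hard part is really the passage from the weak definition of locally shrinkable, which controls only the diameter of $h(K)$ and not its location, to the ability to push $A$ into a prescribed neighborhood of a prescribed point. The Shrinkability Criterion is the essential bridge: it promotes $q_A$ to a near homeomorphism; topological homogeneity is then used to send the collapsed point $\{A\}\in X/A$ exactly to $p$; and the resulting approximating homeomorphism automatically traps $A$ inside any preassigned neighborhood of $p$.
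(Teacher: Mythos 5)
Your proof is correct and follows essentially the same route as the paper: both arguments convert local shrinkability into the near-homeomorphism property of the quotient maps $q_A$, $q_B$ via the Shrinkability Criterion, use topological homogeneity to align the collapsed points, trap the images of $A$ and $B$ inside a shift neighborhood by approximating along a suitable two-element cover, and finish with the local shift property. The only (cosmetic) difference is that you carry out the final shift in $X$ itself at a fixed point $p$ with $O_p=X$, whereas the paper works in the quotient $X/B$ at the point $B$ with the transferred family $\K(X/B)$; your variant spares you the verification that $\K(X/B)$ inherits the local shift property.
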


\begin{proof} By Theorem~\ref{t:shrink}, the quotient maps $q_A:X\to X/A$ and $q_B:X\to X/B$ are strong near homeomorphisms. This implies that the decomposition spaces $X/A$ and $X/B$ are homeomorphic to $X$ and hence are topologically homogeneous. So, we can choose a homeomorphism $f:X/A\to X/B$ that maps the singleton $\{A\}=q_A(A)\in X/A$ onto the singleton  $\{B\}=q_B(B)\in X/B$.

Since the quotient space $X/B$ is homeomorphic to $X$, we can consider the ambiently invariant family $\K(X/B)=\{h(K):K\in\K,\;h\in\HH(X,X/B)\}$ of compact subsets of $X/B$ induced by the tame family $\K$. Since this family has the local shift property, the point $B\in X/B$ has a neighborhood $U\subset X/B$ such that for any two compact sets $K,K'\in\K(X/B)$ in $U$ there is a homeomorphism $h:X/B\to X/B$ such that $h(K)=K'$. Since the quotient maps $q_B:X\to X/B$ and $q_A:X\to X/A$ are strong near homeomorphisms, there are homeomorphisms $h_B:X\to X/B$  and $h_A:X\to X/A$ such that $h_B(B)\subset U$ and $h_A(A)\subset f^{-1}(U)$. Then the compact sets $K=f\circ h_A(A)$ and $K'=h_B(B)$ belong to the family $\K(X/B)$ and lie in the open set $U\subset X/B$. By the choice of $U$, there is a homeomorphism $h:X/B\to X/B$ such that $h(K)=K'$. Now we see that the homeomorphism $h^{-1}_B\circ h\circ f\circ h_A:X\to X$ maps $A$ onto $B$, and hence the sets $A$ and $B$ are ambiently homeomorphic.
\end{proof}

Now we consider three shape properties of subsets.
A compact subset $K$ of a topological space $X$ will be called
\begin{itemize}
\item {\em point-like} if for each closed neighborhood $N\subset X$ of $K$ the complement $N\setminus K$ is homeomorphic to the complement $N\setminus\{x\}$ of some interior point $x\in \Int(N)$ of $N$;
\item {\em cell-like} if for each neighborhood $U$ of $K$ in $X$ the set $K$ is contractible in $U$;
\item {\em cellular} if for each neighborhood $U$ of $K$ in $X$ there is a neighborhood $V\subset U$ of $K$ homeomorphic to
    $$\begin{cases}
\IR^n&\mbox{if $n=\dim(X)$ is finite,}\\
\II^\w\times [0,1)&\mbox{if $\dim(X)$ is infinite}.
\end{cases}
$$\end{itemize}
If each singleton $\{x\}\subset X$ of a paracompact topological space is cellular, then $X$ is a manifold modeled on the Hilbert cube $\II^\w$ or an Euclidean space $\IR^n$, where $n=\dim(X)$.

Each cellular subset in an $\II^n$-manifold is cell-like but the converse is not true even for $\IR^n$-manifolds as shown by the Whitehead Example 9.7 \cite{Dav}. On the other hand, cellularity is equivalent to point-likeness, as shown by the following characterization whose finite dimensional case was proved in  \cite[Proposition 2]{Dav} and \cite{ChriOsb}, and infinite dimensional case in \cite{Cerin}.

\begin{proposition}\label{p13.3} Let $X$ be a manifold modeled on a space $E\in\{\II^\w,\IR^n:n\in\IN\}$. For a compact subset $K$ of $X$ the following conditions are equivalent:
\begin{enumerate}
\item $K$ is point-like;
\item $K$ is cellular;
\item for each neighborhood $U\subset X$ of $K$ there is a tame ball $V\subset U$ that contains $K$;
\item $K$ is locally shrinkable, and
\item the quotient map $q_K:X\to X/K$ is a strong near homeomorphism.
\end{enumerate}
\end{proposition}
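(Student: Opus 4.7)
The plan is to prove the equivalences via the chain $(1)\Leftrightarrow(2)\Leftrightarrow(3)\Leftrightarrow(4)\Leftrightarrow(5)$, relying on the Shrinkability Criterion and on the tameness of the family $\K$ of tame balls in $X$ (Theorem~\ref{t:tame}). The equivalence $(4)\Leftrightarrow(5)$ is immediate from Theorem~\ref{t:shrink} applied to the decomposition $\DD=\{K\}\cup\{\{x\}:x\in X\setminus K\}$, since $\DD$ is strongly shrinkable precisely when $K$ is locally shrinkable. The implication $(3)\Rightarrow(4)$ reduces to the local shrinkability of tame balls (part of Theorem~\ref{t:tame}.1): given a neighborhood $O(K)$ and an open cover $\V$ of $O(K)$, one picks via $(3)$ a tame ball $B\subset O(K)$ containing $K$ and shrinks $B$ (hence $K$) into an element of $\V$ by its local shrinkability. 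Conversely, for $(4)\Rightarrow(3)$, given $U\supset K$, I would cover $U$ by open sets each lying in the interior of some tame ball $\subset U$ (which exists since $\K$ is tame), and apply the local shrinkability of $K$ to obtain a homeomorphism $h\colon X\to X$ fixed outside $U$ with $h(K)\subset\Int(B)$ for some such tame ball $B\subset U$; by ambient invariance of $\K$, the set $h^{-1}(B)\subset U$ is then the desired tame ball around $K$.

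The equivalence $(2)\Leftrightarrow(3)$ is the geometric heart of the argument. For $(2)\Rightarrow(3)$, inside a cellular neighborhood $V\subset U$ of $K$ homeomorphic to $\IR^n$ (resp.\ $\II^\w\times[0,1)$), the compact image of $K$ fits inside a bounded closed cube $[-M,M]^n$ (resp.\ a slab $\II^\w\times[0,r]$ with $r<1$), whose preimage is a tame ball in $X$ contained in $U$ and containing $K$. For $(3)\Rightarrow(2)$, I would use $(3)$ to place $K$ into a tame ball $B\subset U$ with a chart $\phi\colon O(B)\to\IR^n$ (resp.\ $\II^\w\times[0,2)$) sending $B$ to $\II^n$ (resp.\ $\II^\w\times[0,1]$); by continuity of $\phi^{-1}$ and compactness of $\II^n$, there is $a>0$ so small that the thickened tame ball $B'=\phi^{-1}([-a,1+a]^n)$ (resp.\ $\phi^{-1}(\II^\w\times[0,1+a])$) still lies in $U$, and then $\Int(B')\supset B\supset K$ is an open neighborhood of $K$ inside $U$ homeomorphic to the model space, witnessing cellularity.

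The equivalence $(1)\Leftrightarrow(2)$ between point-likeness and cellularity is the classical characterization of cellular compacta, which I would invoke from the cited literature: \cite[Proposition~2]{Dav} together with \cite{ChriOsb} in the finite-dimensional case, and \cite{Cerin} in the Hilbert cube case. The main technical obstacle throughout is the thickening step used in $(3)\Rightarrow(2)$ (and implicitly in $(4)\Rightarrow(3)$): one must choose the thickening parameter $a$ small enough that the expanded tame ball does not escape the prescribed neighborhood $U$. This is handled by a standard Lebesgue-number argument applied to the compact set $\II^n$ in the chart $O(B)\cong\IR^n$, using only that $\phi^{-1}(\II^n)=B\subset U$ and that $U$ is open.
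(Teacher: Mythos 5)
The paper gives no proof of Proposition~\ref{p13.3} at all: it is quoted as a known characterization, with the finite-dimensional case attributed to \cite[Proposition 2]{Dav} and \cite{ChriOsb} and the infinite-dimensional case to \cite{Cerin}. Your proposal therefore takes a genuinely different, more self-contained route: you derive $(2)\Leftrightarrow(3)\Leftrightarrow(4)\Leftrightarrow(5)$ from the paper's own machinery --- the Shrinkability Criterion (Theorem~\ref{t:shrink}), the observation that $K$ is locally shrinkable iff the decomposition $\{K\}\cup\big\{\{x\}:x\in X\setminus K\big\}$ is strongly shrinkable, and the tameness of the family of tame balls (Theorem~\ref{t:tame}) --- and fall back on the cited literature only for the classical equivalence $(1)\Leftrightarrow(2)$. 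This is sound, and arguably more useful than the paper's bare citation, since $(3)$--$(5)$ are the conditions actually exploited elsewhere in the text; your compactness arguments for the thickening in $(3)\Rightarrow(2)$ and the boundedness/slab argument in $(2)\Rightarrow(3)$ are correct. Two spots deserve an explicit line of justification. First, in $(4)\Rightarrow(3)$ you need that every point of $U$ lies in the \emph{interior} of some tame ball contained in $U$; this is true (a chart argument for $\IR^n$-manifolds, and small neighborhoods homeomorphic to $\II^\w\times[0,1)$ from Chapman, as invoked in the proof of Theorem~\ref{t4.7}, for Hilbert cube manifolds), but it does not follow from Definition~\ref{d:K-tame} alone, which only provides a tame ball inside each non-empty open set, with no control on which points it contains. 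Second, in $(2)\Rightarrow(3)$ you implicitly use the cellular neighborhood $V$ as an open chart; in the finite-dimensional case $V\cong\IR^n$ is automatically open by invariance of domain, while in the Hilbert cube case one should either read the definition of cellularity with $V$ open or first pass to such an open copy of $\II^\w\times[0,1)$. With these remarks added, your chain of implications is complete and consistent with the results the paper itself cites.
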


We recall that a topological space $X$ is called {\em locally contractible} if for each point $x\in X$ and a neighborhood $U\subset X$ of $x$ there is another neighborhood $V\subset U$ of $x$, which is contractible in $U$.

\begin{proposition}\label{p13.4} If $\K$ is a tame family of compact subsets of a metrizable topological space $X$, then each compact set $K\in\K$ is
\begin{enumerate}
\item point-like in $X$ provided $X$ is completely metrizable;
\item cell-like in $X$ provided $X$ is locally contractible, and
\item cellular in $X$ provided $X$ is a manifold modeled on a space $E\in\{\II^\w,\IR^n:n\in\IN\}$.
\end{enumerate}
\end{proposition}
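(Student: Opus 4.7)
All three assertions follow from a single consequence of $K$ being locally shrinkable (which is built into Definition~\ref{d:K-tame}): by Lemma~\ref{l:dshrink} the decomposition $\DD_K=\{K\}\cup\{\{x\}:x\in X\setminus K\}$ is strongly shrinkable, so by the Shrinkability Criterion (Theorem~\ref{t:shrink}) the quotient map $q_K:X\to X/K$ is a strong near homeomorphism.

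For (1), I would apply Lemma~\ref{l:approx} to $\DD_K$. Given a closed neighborhood $N$ of $K$, pick an open set $V$ with $K\subset V\subset \Int N$ and set $U=q_K(V)$. Since $K\subset V$, the set $V$ is $\DD_K$-saturated, $U$ is an open neighborhood in $X/K$ of the non-degeneracy part $\DD_K^\circ=\{K\}$, and $q_K^{-1}(U)=V$. Lemma~\ref{l:approx} yields an open cover $\U$ of $U$ such that any homeomorphism $h:V\to U$ with $(h,q_K|V)\prec\U$ extends to a homeomorphism $\bar h:X\to X/K$ coinciding with $q_K$ off $V$. Since $q_K$ is a strong near homeomorphism, $q_K|V:V\to U$ is itself a near homeomorphism, so such an $h$ exists, producing $\bar h$. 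Putting $x=\bar h^{-1}(K)\in V\subset\Int N$ and using $\bar h|X\setminus V=q_K|X\setminus V$ together with $V\subset N$, a short computation gives $\bar h(N)=q_K(N)$, so $\bar h$ is a homeomorphism of pairs $(N,\{x\})\to(q_K(N),\{K\})$. Composition with the homeomorphism $q_K^{-1}:q_K(N)\setminus\{K\}\to N\setminus K$ gives $N\setminus\{x\}\cong N\setminus K$, the point-likeness of $K$.

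For (2), let $U$ be a neighborhood of $K$ in the locally contractible space $X$. For every $y\in U$ local contractibility supplies a neighborhood $V_y\subset U$ of $y$ contractible in $U$; the family $\V=\{V_y:y\in U\}$ is an open cover of $U$. Local shrinkability of $K$ applied to $U$ and $\V$ furnishes a homeomorphism $h:X\to X$ with $h|X\setminus U=\id$ and $h(K)\subset V$ for some $V\in\V$. Choose a null-homotopy $H:V\times[0,1]\to U$ with $H(\cdot,0)=\id_V$ and $H(\cdot,1)\equiv y_0$. Because $h$ fixes $X\setminus U$ pointwise, both $h$ and $h^{-1}$ preserve $U$ setwise, so the conjugated formula $\tilde H(x,t)=h^{-1}(H(h(x),t))$ defines a continuous map $K\times[0,1]\to U$ with $\tilde H(x,0)=x$ and $\tilde H(x,1)=h^{-1}(y_0)$, which null-homotopes the inclusion $K\hookrightarrow U$ inside $U$; hence $K$ is cell-like.

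Assertion (3) is immediate from Proposition~\ref{p13.3}: being locally shrinkable (condition~(4) there) forces $K$ to be cellular (condition~(2) there). The main technical obstacle is part (1), where one must verify that the chosen $V$ is $\DD_K$-saturated, that the relevant restriction $q_K|V$ is indeed the near homeomorphism whose approximant is extended by Lemma~\ref{l:approx}, and that the resulting $\bar h$ really sends $N$ onto $q_K(N)$ so that the pair structure transfers cleanly. The conjugation identity in part (2) is the only other delicate point, and it depends essentially on the observation that $h=\id$ off $U$ implies $h(U)=U$.
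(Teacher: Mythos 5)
Your proof is correct. Parts (1) and (3) follow essentially the paper's own route: local shrinkability of $K\in\K$ makes the decomposition $\{K\}\cup\big\{\{x\}:x\in X\setminus K\big\}$ strongly shrinkable, so by Theorem~\ref{t:shrink} the quotient map $q_K$ is a strong near homeomorphism, and point-likeness is obtained by transplanting a closed neighborhood of $K$ through a homeomorphism $X\to X/K$ that agrees with $q_K$ off a saturated open set; (3) is in both cases an immediate appeal to Proposition~\ref{p13.3}. The differences are these. In (1) the paper simply asserts, from the strong near homeomorphism property, the existence of a homeomorphism $f:X\to X/K$ coinciding with $q_K$ outside the chosen neighborhood; your detour through Lemma~\ref{l:approx} is precisely the gluing argument that justifies this assertion, so your version is the same proof made more explicit. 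In (2) the paper just cites Theorem~3.5 of Daverman's book, whereas you reprove the fact directly: shrink $K$ into a member $V$ of a cover of $U$ by sets contractible in $U$, then conjugate the contraction by the shrinking homeomorphism $h$, using that $h|X\setminus U=\id$ forces $h(U)=U$; this buys a self-contained elementary argument at the cost of redoing a standard lemma, and it is correct (any neighborhood may be replaced by its interior, since contractibility in $U$ passes to subsets). One cosmetic caveat: your opening sentence, claiming that all three parts rest on $q_K$ being a strong near homeomorphism, overstates things, because Theorem~\ref{t:shrink} requires complete metrizability, which is not assumed in (2); but your actual argument for (2) uses only local shrinkability, so nothing breaks.
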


\begin{proof} Fix a compact set $K\in\K$ and a neighborhood $U$ of $K$ in $X$. By Definition~\ref{d:K-tame}, the set $K$ is locally shrinkable.

(1) If $X$ is completely metrizable, then by Theorem~\ref{t:shrink}, the quotient map $q_K$ is a strong near homeomorphism. Consequently, there exists a homeomorphism $f:X\to X/K$ such that $f|X\setminus U=\id$. Consider the point $K\in X/K$ and its image $x=f^{-1}(K)\in U$ under the inverse homeomorphism $f^{-1}:X/K\to X$. It follows that $h=f^{-1}\circ q_K|\bar U\setminus K:\bar U\setminus K\to \bar U\setminus\{x\}$ a homeomorphism, proving that the set $K$ is point-like in $X$.
\smallskip

(2) If the space $X$ is locally contractible, then the locally shrinkable subset $K\subset X$ is cell-like by Theorem~3.5 of \cite{Dav}.
\smallskip

(3) The third statement follows immediately from Proposition~\ref{p13.3}.
\end{proof}

Propositions~\ref{p4.2} and \ref{p13.4} imply that each tame family $\K$ of compact subsets of a topologically homogeneous $\II^n$-manifold consists of pairwise ambiently homeomorphic cellular subsets and hence $\K=\{h(K_0):h\in\HH(X)\}$ for some cellular subset $K_0\subset X$. Now we are going to prove the converse statement: for each cellular subset $K_0$ of a topologically homogeneous Hilbert cube manifold $X$ the family $\K=\{h(K_0):h\in\HH(X)\}$ is tame.

\begin{theorem}\label{t4.7} A family $\K$ of compact subsets of a topologically homogeneous Hilbert cube manifold $X$ is tame if and only if $\K=\{h(K_0):h\in \HH(X)\}$ for some cellular compact subset $K_0\subset X$.
\end{theorem}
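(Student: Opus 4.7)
The plan splits the biconditional into an easy necessity direction and a more delicate sufficiency direction. For necessity, assume $\K$ is tame. By condition~(4) of Definition~\ref{d:K-tame} the family $\K$ is non-empty, so fix any $K_0\in\K$. Since $X$ is an $\II^\w$-manifold, Proposition~\ref{p13.4} yields that $K_0$ is cellular; Proposition~\ref{p4.2} shows that any two members of $\K$ are ambiently homeomorphic, and combined with the ambient invariance of $\K$ this gives $\K=\{h(K_0):h\in\HH(X)\}$.

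For sufficiency, let $K_0\subset X$ be cellular and set $\K=\{h(K_0):h\in\HH(X)\}$. I verify the four clauses of Definition~\ref{d:K-tame}. Ambient invariance is by construction; local shrinkability of every $K\in\K$ follows from Proposition~\ref{p13.3} (cellular is equivalent to locally shrinkable in Hilbert cube manifolds) and is preserved by ambient homeomorphisms. The density clause---each non-empty open $U\subset X$ contains a member of $\K$---is proved by first applying the local shrinkability of $K_0$ to produce $g_1\in\HH(X)$ with $g_1(K_0)$ sitting inside a small tame ball $T_1$, then invoking the local shift property of tame balls (Theorem~\ref{t:tame}(1)) together with topological homogeneity of $X$ to obtain $g_2\in\HH(X)$ carrying $T_1$ into $U$; the composition satisfies $g_2\circ g_1(K_0)\subset U$.

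The heart of the argument is clause~(3), the local shift property. Given $x\in X$ and a neighborhood $O_x$, choose $V_x\subset O_x$ witnessing the local shift property for the tame family of tame balls. Using cellularity, shrink further to a neighborhood $U_x\subset V_x$ so that every $A\in\K$ with $A\subset U_x$ admits a tame ball neighborhood inside $V_x$. Given $A,B\in\K$ inside $U_x$, choose such tame balls $T_A\supset A$ and $T_B\supset B$ in $V_x$. The tame-ball shift yields $h_1\in\HH(X)$ with $h_1(T_A)=T_B$ and $h_1|X\setminus O_x=\id$, so $h_1(A)$ and $B$ are both cellular subsets of the single tame ball $T_B$, each ambiently (in $X$) homeomorphic to $K_0$.

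It remains to produce $h_2\in\HH(X)$ supported in $T_B\subset O_x$ with $h_2(h_1(A))=B$; then $h=h_2\circ h_1$ is the required homeomorphism. This is the main obstacle: one must move one cellular subset of the Hilbert cube $T_B$ onto another while keeping $\partial T_B$ fixed. My plan is to invoke Chapman's $Z$-set unknotting theorem (Theorem~11.1 of~\cite{Chap}, the same tool invoked in the proof of Theorem~\ref{t:tame}) inside a slightly enlarged tame ball $T_B\subset\Int(T_B')\subset V_x$: local shrinkability lets me first shrink both $h_1(A)$ and $B$ into $Z$-set position inside $\Int(T_B)$, Chapman's theorem then extends the abstract homeomorphism $h_1(A)\to B$ to a self-homeomorphism of $T_B'$, and a collar homeomorphism on the annular region $T_B'\setminus\Int(T_B)$ cuts off the extension so that it becomes the identity on $\partial T_B$. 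The careful nesting $U_x\subset V_x\subset O_x\subset \Int(T_B')$ and this final collar-and-cutoff reduction form the delicate technical point; everything else is direct assembly of the earlier propositions in the paper.
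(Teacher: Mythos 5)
Your necessity direction coincides with the paper's (Propositions~\ref{p4.2} and \ref{p13.4}), and the verification of ambient invariance and local shrinkability in the sufficiency direction is also the same; the density clause is handled in the paper via Proposition~\ref{p4.1} rather than your shrink-and-shift sketch, but that is a minor difference. The genuine gap is in your treatment of the local shift property, precisely at the step you yourself flag as the main obstacle. You propose to ``shrink $h_1(A)$ and $B$ into $Z$-set position'' and then extend an \emph{abstract} homeomorphism $h_1(A)\to B$ by Chapman's Theorem~11.1. This fails: Theorem~11.1 unknots only $Z$-embeddings, and a cellular compactum in a Hilbert cube manifold need not be a $Z$-set, nor can it in general be carried onto one by any ambient homeomorphism. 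Indeed $K_0$ is allowed to be a tame ball (tame balls are cellular), which has non-empty interior, so no homeomorphic ambient copy of it is ever a $Z$-set; ``$Z$-set position'' is simply unattainable, and without the $Z$-set hypothesis there is no unknotting theorem that upgrades an abstract homeomorphism between the two copies to an ambient one supported in $T_B$. (Compare Corollary~\ref{c4.9}, where the extra hypothesis that the elements are $Z$-sets is exactly what makes the abstract-homeomorphism-extension argument legitimate.)

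The paper's proof avoids extending any abstract homeomorphism between the cellular sets. It exploits the fact that $K_1$ and $K_2$ are images of the \emph{same} set $K_0$: choose one small tame ball $B_0\supset K_0$ with $B_0\subset h_1^{-1}(U_x)\cap h_2^{-1}(U_x)$, transport it to $B_i=h_i(B_0)\subset U_x$, and observe that the global homeomorphism $h_{12}=h_2\circ h_1^{-1}$ already maps the pair $(B_1,K_1)$ onto $(B_2,K_2)$; the only problem is its behaviour outside $B_1$. One then takes a larger tame ball $B\subset U_x$ containing $B_1\cup B_2$ and applies $Z$-set unknotting not to $K_1,K_2$ but to the boundaries $\partial B_1$, $\partial B_2$, $\partial B$, which genuinely are $Z$-sets homeomorphic to $\II^\w$ in the Hilbert cubes $B\setminus\Int(B_i)$, to extend $h_{12}|\partial B_1\cup\id|\partial B$ over the ``annulus'' and cut $h_{12}$ off to the identity outside $B$. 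Your decoupled choice of tame balls $T_A,T_B$ and the tame-ball shift produce a homeomorphism matching the balls but not the cellular sets inside them, and no amount of collar adjustment recovers the matching of $h_1(A)$ with $B$; the missing idea is the correlation of the ball neighbourhoods through $h_1,h_2$ themselves, followed by unknotting along the boundary Hilbert cubes.
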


\begin{proof} The ``only if'' part follows from Propositions~\ref{p4.2} and \ref{p13.4}.
To prove the ``if'' part, assume that $\K=\{h(K_0):h\in \HH(X)\}$ for some cellular compact subset $K_0\subset X$. It is clear that thus defined family $\K$ is ambiently invariant and $\bigcup\K=X$ is dense in $X$. Since topologically homogeneous manifolds are strongly locally homogeneous, Proposition~\ref{p4.1} implies that each non-empty open subset of $X$ contains a set $K\in\K$. By Proposition~\ref{p13.3}, each cellular subset of $X$ is locally shrinkable. It remains to show that $\K$ has the local shift property. Given a point $x\in X$ and a neighborhood $O_x\subset X$ we need to find a neighborhood $U_x\subset X$ such that for any sets $K,K'\in\K'$ in $U_x$ there is a homeomorphism $h:X\to X$ such that $h|X\setminus O_x=h|X\setminus O_x$. By Theorem~12.1 of \cite{Chap}, the point $x$ of the Hilbert cube manifold $X$ has a neighborhood $U_x\subset O_x$ homeomorphic to $\II^\w\times[0,1)$. We claim that for any two compact subsets $K_1,K_2\in\K$ in $U_x$ there is a homeomorphism $h:X\to X$ such that $h(K_1)=K_2$ and $h|X\setminus O_x=\id$.

For every $i\in\{1,2\}$ fix a homeomorphism $h_i$ of $X$ such that $h_i(K_0)=K_i$. The set $K_0$, being cellular in $X$, lies in the interior of a tame ball $B_0\subset X$ such that $B_0\subset  h_1^{-1}(U_x)\cap h_2^{-1}(U_x)$. Then $B_1=h_1(B_0)$ and $B_2=h_2(B_0)$ are tame balls in $U_x$ and $h_{12}=h_2\circ h_1:X\to X$ is a homeomorphism such that $h_{12}(K_1)=K_2$ and $f(\partial B_1)=\partial B_2$. Since $U_x$ is homeomorphic to $\II^\w\times[0,1)$, the union $B_1\cup B_2$ lies in the interior of some tame ball $B$ in $U_x$. Being tame, the ball $B$ is homeomorphic to the Hilbert cube $\II^\w$ and its boundary $\partial B$ in $X$ is also homeomorphic to the Hilbert cube $\II^\w$. Moreover, $\partial B_0$ is a $Z$-set in $B$ (which means that the identity map $\id:B\to B$ can be uniformly approximated by maps $B\to B\setminus\partial B$).  By the same reason, for every $i\in\{1,2\}$ the boundary $\partial B_i$ of the tame cube $B_i$ is homeomorphic to $\II^\w$ and is a $Z$-set both in $B_i$ and in the complement $B\setminus\Int(B_i)$. Moreover, since the boundary $\partial B_i$ is a retract of the tame ball $B_i$, the complement $B\setminus \Int(B_i)$ is a retract of the tame ball $B$ and hence $B\setminus\Int(B_i)$ is homeomorphic to the Hilbert cube $\II^\w$, being a compact contractible $\II^\w$-manifold; see \cite[22.1]{Chap}. By Theorem 11.1 of \cite{Chap}, the homeomorphism $h_{12}|\partial B_1\cup \id|\partial B:\partial B_1\cup \partial B\to\partial B_2\cup\partial B$ can be extended to a homeomorphism $\bar h_{12}:B\setminus\Int(B_1)\to B\setminus\Int(B_2)$ such that $\bar h_{12}|\partial B_1=h_{12}|B_1$ and $h_{12}|\partial B=\id$. Then the homeomorphism $h:X\to X$ defined by $h|B_1=h_{12}|B_1$, $h|B_1\setminus\Int(B_1)=\bar h_{12}$, and $h|X\setminus\Int(B)=\id$ has the required property: $h(K_1)=K_2$ and $h|X\setminus O_x=\id$.
\end{proof}

A decomposition $\DD$ of an $\II^n$-manifold $X$ will be called {\em cellular} if each set $D\in\DD$ is cellular in $X$. Theorem~\ref{t4.7} and Corollary~\ref{c2.6} imply the following corollaries.

\begin{corollary}\label{c4.8} Two cellular dense vanishing strongly shrinkable decompositions $\A,\BB$ of a Hilbert cube manifold $X$ are topologically equivalent if any two sets $A\in\A^\circ$ and $B\in\BB^\circ$ are ambiently homeomorphic.
\end{corollary}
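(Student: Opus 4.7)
The plan for proving Corollary~\ref{c4.8} is to reduce the statement to Corollary~\ref{c2.6} by exhibiting a single tame family $\K$ of compact subsets of $X$ relative to which both $\A$ and $\BB$ become dense $\K$-tame decompositions. The construction of $\K$ will go through Theorem~\ref{t4.7}, while the ambient homeomorphism hypothesis will be used precisely to guarantee that both $\A^\circ$ and $\BB^\circ$ land inside $\K$.

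First I would dispose of the trivial case in which one of $\A^\circ,\BB^\circ$ is empty: density then forces that decomposition to consist of singletons, and the hypothesis that elements of $\A^\circ$ and $\BB^\circ$ are ambiently homeomorphic makes the other one consist of singletons too, so the identity witnesses the equivalence. In the main case, I would pick any $A_0\in\A^\circ$, observe that $A_0$ is a cellular compact subset of $X$ (since $\A$ is cellular), and set
$$\K:=\{h(A_0):h\in\HH(X)\}.$$
Because a Hilbert cube manifold is locally modeled on the homogeneous space $\II^\w$, it is strongly locally homogeneous, completely metrizable, and topologically homogeneous on each connected component, so Theorem~\ref{t4.7} certifies that $\K$ is a tame family of compact subsets of $X$.

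Next I would verify that $\A^\circ\cup\BB^\circ\subset\K$. Fix any $B_0\in\BB^\circ$; by hypothesis $A_0$ and $B_0$ are ambiently homeomorphic. Given an arbitrary $A\in\A^\circ$, the hypothesis also provides an ambient homeomorphism $A\to B_0$, and composing with the inverse of the one $A_0\to B_0$ yields an ambient homeomorphism $A\to A_0$, that is, $A\in\K$. The symmetric argument applied to any $B\in\BB^\circ$ (using that $B$ and $A_0$ are ambiently homeomorphic directly by hypothesis) yields $B\in\K$. Combined with the given vanishing and strong shrinkability of $\A$ and $\BB$, Definition~\ref{d:d-Ktame} upgrades them to dense $\K$-tame decompositions of $X$, so Corollary~\ref{c2.6} delivers the desired topological equivalence.

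The main obstacle I foresee is confirming the topological homogeneity of $X$ demanded by Theorem~\ref{t4.7}. This is immediate when $X$ is connected, but for a disconnected Hilbert cube manifold the argument must be executed component by component; the hypothesis that any $A\in\A^\circ$ and any $B\in\BB^\circ$ are ambiently homeomorphic forces all components meeting $\bigcup\A^\circ\cup\bigcup\BB^\circ$ to be mutually homeomorphic, so the bookkeeping needed to paste the per-component equivalences together is routine but has to be carried out with some care.
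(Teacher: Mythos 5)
Your proposal follows exactly the route the paper intends: Corollary~\ref{c4.8} is presented there as a direct consequence of Theorem~\ref{t4.7} (which certifies that $\K=\{h(A_0):h\in\HH(X)\}$ is a tame family) together with Corollary~\ref{c2.6}, and your verification that $\A^\circ\cup\BB^\circ\subset\K$ by composing ambient homeomorphisms through a fixed $B_0\in\BB^\circ$ is precisely the intended use of the hypothesis. Two cosmetic quibbles: in the degenerate case the hypothesis is vacuous and does not force the other decomposition to be trivial (rather, density of the non-degeneracy part forces $X=\emptyset$ when $\A^\circ$ or $\BB^\circ$ is empty), and in the disconnected case density of $\bigcup\A^\circ$ together with the ambient-homeomorphism hypothesis already makes all components of $X$ mutually homeomorphic, so $X$ is topologically homogeneous and Theorem~\ref{t4.7} applies directly, with no per-component pasting needed.
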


%\begin{proof} Since any two sets $A\in\A^\circ$ and $B\in\BB^\circ$ are ambiently homeomorphic, $\A\circ\cup\BB^\circ\subset\{h(K_0):h\in\HH(X)\}$ for some cellular set $K_0\subset X$.
%Consequently, the decompositions $\A$ and $\BB$ are $\K$-tame for the family $\K=\{h(K_0):h\in\HH(X)\}$, which is tame by Theorem~\ref{t4.7}. By Corollary~\ref{c2.6}, the decompositions $\A$ and $\BB$ are topologically equivalent.
%\end{proof}

\begin{corollary}\label{c4.9}  Two cellular dense vanishing decompositions $\A,\BB$ of a topologically homogeneous Hilbert cube manifold $X$ are topologically equivalent if any two sets $A\in\A^\circ$ and $B\in\BB^\circ$ are homeomorphic $Z$-sets in $X$.
\end{corollary}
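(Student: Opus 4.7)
The plan is to reduce Corollary~\ref{c4.9} to Corollary~\ref{c4.8} by strengthening the hypotheses in two ways: promoting ``homeomorphic'' to ``ambiently homeomorphic'' via $Z$-set unknotting, and deducing strong shrinkability of the cellular vanishing decompositions $\A, \BB$ from the $Z$-set hypothesis on their non-degenerate elements.

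For the first reduction, I fix any $A_0 \in \A^\circ$. By hypothesis every set in $\A^\circ \cup \BB^\circ$ is a $Z$-set in the Hilbert cube manifold $X$ homeomorphic to $A_0$. Chapman's $Z$-set unknotting theorem (Theorem~11.1 of~\cite{Chap}) says that any homeomorphism between two $Z$-sets in a Hilbert cube manifold extends to an ambient homeomorphism, so every member of $\A^\circ \cup \BB^\circ$ is ambiently homeomorphic to $A_0$ in $X$. Consequently any $A \in \A^\circ$ and any $B \in \BB^\circ$ are ambiently homeomorphic, which is exactly the remaining hypothesis of Corollary~\ref{c4.8}.

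For strong shrinkability of $\A$ (the argument for $\BB$ is identical), I fix an admissible metric on $X$. The vanishing property of $\A$ implies that for every $n \in \IN$ the subfamily $\A^\circ_n = \{A \in \A^\circ : \diam(A) \ge 2^{-n}\}$ is discrete in $X$, so $\bigcup \A^\circ_n$ is a closed locally finite union of $Z$-sets; by standard properties of $Z$-sets in Hilbert cube manifolds, such a union is again a $Z$-set in $X$. Therefore $\bigcup \A^\circ = \bigcup_{n \in \IN} \bigcup \A^\circ_n$ is contained in a countable union of $Z$-sets. By Toru\'nczyk's theorem cited in the proof of Theorem~\ref{t:tame}, the decomposition space $X/\A$ is an $\II^\w$-manifold. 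Since each $A \in \A$ is cellular and hence cell-like by Proposition~\ref{p13.4}, the quotient map $q_\A : X \to X/\A$ is cell-like, and Corollary~43.2 of~\cite{Chap} implies that $q_\A$ is a near homeomorphism. Applying the same reasoning to the decomposition $\A|U$ of every $\A$-saturated open subspace $U \subset X$, we obtain that $q_\A$ is a strong near homeomorphism, and Theorem~\ref{t:shrink} then yields the strong shrinkability of $\A$.

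With strong shrinkability of both $\A$ and $\BB$ and ambient homeomorphism of any pair of non-degenerate elements in place, Corollary~\ref{c4.8} immediately gives the topological equivalence of $\A$ and $\BB$. The main technical point in this plan is the verification of strong shrinkability, which rests on combining the $Z$-set hypothesis with Toru\'nczyk's characterization of Hilbert cube manifold quotients and Chapman's near-homeomorphism theorem for cell-like maps; once this is established, the rest of the argument is a direct appeal to previously proved results.
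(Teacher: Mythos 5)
Your overall strategy is the same as the paper's (establish strong shrinkability of $\A$ and $\BB$, upgrade the hypothesis to ``any $A\in\A^\circ$, $B\in\BB^\circ$ are ambiently homeomorphic'', then quote Corollary~\ref{c4.8}), and your shrinkability argument is essentially the paper's, spelled out in slightly more detail (Toru\'nczyk's Theorem~3$'$ applies because the vanishing property makes $\bigcup\A^\circ$ a countable union of discrete, hence closed, unions of $Z$-sets; then Corollary~43.2 of \cite{Chap} and Theorem~\ref{t:shrink}, applied also to $\A$-saturated open subsets, give strong shrinkability). That half is fine.

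The gap is in the first reduction. The statement you attribute to Theorem~11.1 of \cite{Chap} --- that \emph{any} homeomorphism between two $Z$-sets of a Hilbert cube \emph{manifold} extends to an ambient homeomorphism --- is false; Theorem~11.1 concerns $Z$-sets of the Hilbert cube itself, where no homotopy condition is needed because $\II^\w$ is contractible. For $Z$-sets $A,B$ in a $Q$-manifold $X$, the unknotting theorem (Theorem~19.4 of \cite{Chap}) extends a homeomorphism $h:A\to B$ to an ambient one only when $h$ and the inclusion $A\hookrightarrow X$ are homotopic as maps into $X$. Without that condition the conclusion fails: in $X=\II^\w\times S^1$ an essential circle which is a $Z$-set (e.g.\ $\{q_0\}\times S^1$ with $\{q_0\}$ a $Z$-set in $\II^\w$) and a null-homotopic circle embedded as a $Z$-set in a chart are homeomorphic $Z$-sets that are not ambiently homeomorphic. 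In the corollary the homotopy condition is not free: it is supplied by \emph{cellularity} (cellular sets are cell-like, so their inclusions are null-homotopic, making any two embeddings into the same path component homotopic), and the case where $A$ and $B$ lie in different connected components of $X$ requires the hypothesis of \emph{topological homogeneity}, which your argument never uses --- a sign that something is missing, since this hypothesis is exactly what lets one move one component onto another before applying Theorem~19.4. Patching your step along these lines recovers the paper's proof.
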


\begin{proof} By Theorem 3' of \cite{Tor80}, the decomposition space $X/\A$ is a Hilbert cube manifold and by Corollary 43.2 \cite{Chap}, the quotient map $q_\A:X\to X/\A$ is a near homeomorphism. By Theorem~\ref{t:shrink}, the decomposition $\A$ is strongly shrinkable.

Next, we show that any two sets $A\in\A$ and $B\in\BB$ are ambiently homeomorphic in $X$. By our assumption, $A$ and $B$ are homeomorphic cellular $Z$-sets in $X$.  Then there is a homeomorphism $h:A\to B$. Being cellular, the compact sets $A,B$ are connected.
Let $X_A$ and $X_B$ be the connected components of $X$ that contain the sets $A,B$, respectively.
Since the space $X$ is topologically homogeneous, there is a homeomorphism $f:X\to X$ such that $f(X_B)=X_A$.  By Theorem 15.3 \cite{Dav}, the maps  $i_A:A\to X_A$ and $f^{-1}\circ h:A\to X_A$ are homotopic (being homotopic to constant maps into the path-connected space $X_A$).
Since $A$ and $f^{-1}\circ h(A)=f^{-1}(B)$ are $Z$-sets in $X_A$, Theorem 19.4 of \cite{Chap}, yields a homeomorphism $\Phi:X\to X$ such that $\Phi|A=f^{-1}\circ h|A$. Then $f\circ\Phi:(X,A)\to (X,B)$ is a homeomorphism of the pairs, witnessing that the sets $A,B$ are ambiently homeomorphic in $X$. By Corollary~\ref{c4.8}, the decompositions $\A$ and $\BB$ are topologically equivalent.
\end{proof}

\begin{remark} Corollary~\ref{c4.8} cannot be generalized to finite dimensional $\IR^n$-manifolds.
Denote by $\HH_+(\IR^2)$ the subgroup of the homeomorphism group $\HH(\IR^2)$, consisting of orientation preserving homeomorphisms of the real plane $\IR^2$. Take any cellular subset $K_0\subset \IR^2$ such that $K_0\ne h(K_0)$ for each orientation reversing homeomorphism $h\in\HH(\IR^2)\setminus\HH_+(\IR^2)$.
Such a set $K_0$ can look as shown on the picture:

\begin{picture}(100,40)(-100,-10)
\put(20,0){\line(1,0){60}}
\put(40,0){\line(0,1){20}}
\put(60,0){\line(0,1){20}}
\put(60,20){\circle*{5}}
\end{picture}

Consider the families $\K_+=\{h(K_0):h\in\HH_+(\IR^2)\}$ and $\K_-=\{h(K_0):h\in\HH(\IR^2)\setminus\HH_+(\IR^2)\}$. Repeating the proof of Theorem~\ref{t:exist} it is possible to construct dense vanishing strongly shrinkable decompositions $\A$ and $\BB$ of the plane $\IR^2$ such that $$\A^\circ\subset\K_+,\;\;\BB^\circ\subset\K_+\cup\K_-
\mbox{ \ and \ }\BB^\circ\cap\K_+\ne\emptyset\ne\BB^\circ\cap\K_-.$$ It can be shown that the decompositions $\A$ and $\BB$ are not topologically equivalent in spite of the fact that any two sets $A\in\A$ and $B\in\BB$ are ambiently homeomorphic.
\end{remark}

% \section{An Open Problem}

% \begin{problem}
% Construct universal nowhere dense subsets in manifolds modeled on Hilbert spaces, Menger cubes or % N\"obeling spaces.
% \end{problem}

\section*{Acknowledgements}
This research was supported by the Slovenian Research Agency grants P1-0292-0101 and J1-4144-0101.
The first author has been partially financed by NCN means granted by decision DEC-2011/01/B/ST1/01439.

% \newpage

\end{document}